\theoremstyle{plain}
\newtheorem{theorem}{Theorem}[section]
\newtheorem{corollary}[theorem]{Corollary}
\newtheorem{lemma}[theorem]{Lemma}
\newtheorem{proposition}[theorem]{Proposition}
\newtheorem{property}[theorem]{Property}
\newtheorem{conj}[theorem]{Conjecture}
\theoremstyle{definition}
\newtheorem{definition}[theorem]{Definition}
\theoremstyle{remark}
\newtheorem{remark}[theorem]{Remark}
\DeclareMathOperator*{\argmin}{argmin}
\newcommand{\RST}{\text{\textbf{RST}}}
\newcommand{\CFD}{\text{CFD}}
\newcommand{\MBD}{\text{MBD}}
\newcommand{\Stab}{\text{Stab}}
\newcommand{\Cone}{\text{Cone}}
\newcommand{\Rad}{\text{Rad}}
\newcommand{\Vol}{\mbox{Vol}}
\newcommand{\NN}{{\mathcal{N}}}
\newcommand{\eps}{\varepsilon}
\newcommand{\E}{{\mathbb E}}
\renewcommand{\P}{{\mathbb P}}
\newcommand{\R}{{\mathbb R}}
\renewcommand{\H}{{\mathbb H}}
\newcommand{\ind}{{\textbf{1}}}
\renewcommand{\Stab}{\mbox{Stab}}
\title{Thick trace at infinity\\ for the Hyperbolic Radial Spanning Tree}
\author{David Coupier\footnote{IMT Nord Europe, Institut Mines T\'el\'ecom, Univ. Lille, david.coupier@imt-nord-europe.fr}, \qquad Lucas Flammant\footnote{LAMAV, Univ. Polytechnique Hauts-de-France, CNRS}, \qquad Viet Chi Tran\footnote{LAMA, Univ Gustave Eiffel, Univ Paris Est Creteil, CNRS; IRL 3457, CRM-CNRS, Université de Montréal, Canada. chi.tran@univ-eiffel.fr}}
\date{\today}
\begin{document}

\maketitle

\begin{abstract}
Since the works of Howard \& Newman (2001), it is known that in \textit{straight} radial rooted trees, with probability $1$, infinite paths all have an asymptotic direction and each asymptotic direction is reached by (at least) an infinite path. Moreover, there exists a set of 'exceptionnal' directions reached by (at least) two infinite paths which is random, dense and only countable in dimension $2$. Howard \& Newman's method says nothing about (random) directions reached by more than two infinite paths and, in particular, if such 'very exceptionnal' directions exist in dimension $2$. In this paper, we prove that the answer is no for the hyperbolic Radial Spanning Tree (RST): in dimension $2$, this tree does not contain $3$ infinite paths with the same (random) asymptotic direction with probability one. Turned in another way, this means that there is no infinite but thin subtree in the hyperbolic RST, i.e. whose infinite paths would all have the same asymptotic direction. We actually prove a stronger result in dimension $d+1$, $d\geq 1$, stating that any infinite subtree of the hyperbolic RST a.s. generates a thick trace at infinity, i.e. the set of asymptotic directions reached by its infinite paths has a positive measure.
\end{abstract}

\textbf{Key words: } hyperbolic space, stochastic geometry, random geometric tree, Radial Spanning Tree, continuum percolation, Poisson point processes.
\bigbreak
\textbf{AMS 2010 Subject Classification:} Primary 60D05, 60K35, 82B21.
\bigbreak
\textbf{Acknowledgments.} This work has been supported by the RT GeoSto 3477, the Labex Bézout (ANR-10-LABX-58), the ANR PPPP (ANR-16-CE40-0016) and the ANR GrHyDy (ANR-20-CE40-0002). V.C.T. thanks the CRM Montr\'eal (IRL CNRS 3457) where part of this work was completed.

\section{Introduction}

Unlike combinatorial random graphs whose Erd\"os-R\'enyi model is certainly the most famous specimen (see \cite{ErdosRenyi,vanderhofstad}), the structure of a \textit{geometric} random graph depends on the locations of its vertices (embedded in a metric space) and on some geometrical rules. This feature makes geometric random graphs suitable to model real phenomena (in biology, material science, image analysis or telecommunication networks) and this is the reason why they have been intensively studied in the last decades. See the book of Penrose \cite{penrose2003random} for a general reference on the topic.

For geometric random graphs, macroscopic properties trigger challenging questions, such as studying the number of the topological ends of these structures. Alexander solves this question in \cite{Alexander} for Minimal Spanning Forests on infinite graphs. In the case of Euclidean trees, i.e. trees whose vertices are points of $\R^{d+1}$ with $d\geq 1$, a fundamental step has been taken by Howard \& Newman for \textit{straight} rooted trees, i.e. whose subtrees are all the thinner as their roots are far away from that of the entire tree \cite[Section 2.3]{HowardNewman}. They develop an efficient method \cite[Proposition 2.8]{HowardNewman} ensuring that any straight tree $T$ satisfies the two following properties almost surely (a.s.):

\medskip

\noindent
(A) Every infinite path $(z_n)_{n\geq 0}$ of $T$-- a sequence of different vertices $(z_n)_{n\geq 0}$ such that $\{z_n,z_{n+1}\}$ is an edge of the tree for every $n\geq 0$ --admits an asymptotic direction $u$ in the unit sphere $\mathbb{S}^{d}$ of $\R^{d+1}$, i.e.
\begin{equation}
\label{AsymptoticDirection}
\lim_{n \rightarrow +\infty} \frac{z_n}{|z_n|} = u ~.
\end{equation}
\noindent
(B) Every direction $u \in \mathbb{S}^{d}$ is asymptotically reached by (at least) one infinite path of the tree $T$. 

\medskip

\noindent
We will also say that the direction $u$ satisfying (\ref{AsymptoticDirection}) is \textit{asymptotically reached} or \textit{targeted} by the path $(z_n)_{n\geq 0}$. In the whole paper, the dimension of the ambiant space is denoted by $d+1$.

Howard and Newman applied their method to the case of first-passage percolation trees built on a Poisson Point Process (PPP). They also proved that, for any deterministic $u \in \mathbb{S}^{d}$, the tree $T$ a.s. contains exactly one infinite path with $u$ as asymptotic direction. However there exists a.s. a set of random directions, thought as 'exceptional', targeted by at least two infinite paths which is dense in $\mathbb{S}^{d}$, and only countable in dimension $2$. See Fig. \ref{Fig:simulationRST} and Remark \ref{Rmk:TwoPaths} for details about these 'exceptional' directions.

Howard \& Newman's method then motivated many works on geometric random trees (in particular in dimension $2$). Various authors stated the straightness of their favorite tree so as to obtain Properties (A) and (B) for its infinite paths. Here are some examples. The Directed Last-Passage Percolation (LPP) Tree is obtained by a last-passage procedure from i.i.d. weights associated to the vertices of the grid $\mathbb{N}^{d+1}$. In the case of exponential weights and $d+1=2$, Ferrari \& Pimentel \cite{FP} established the straightness of the directed LPP tree. From asymptotic directions of infinite paths of the LPP tree, they deduced the existence of an asymptotic direction for a competition interface. Other works focused on geometric trees directed towards a distinguished point $0$, with edges defined by local geometric rules. A first example is the Radial Spanning Tree (RST) introduced by Baccelli \& Bordenave in the Euclidean plane to model communication networks \cite{BB07}. Each vertex of this tree has an outgoing edge towards the nearest vertex among those closer to $0$. The hyperbolic RST studied in this paper is an extension of their RST. Using a directed forest (namely the Directed Spanning Forest) approximating locally, in distribution and far from the root the RST, they proved the straightness of the RST. A second example is the Navigation Tree defined by Bonichon \& Marckert in \cite{BM}. Each vertex has also outdegree one and the corresponding edge links it to the closet vertex in a given cone directed towards $0$. To describe the asymptotic geometry of the navigations paths, the authors stated the straightness of the Navigation Tree.\\

Since in dimension $2$, only a countable number of random directions are targeted by at least two infinite paths, it is natural to ask whether some of them are `very exceptional', i.e. targeted by more than two infinite paths. Assuming the non-crossing path property (satisfied by all the previously mentioned trees), the existence of such a 'very exceptional' direction would mean that of an infinite subtree-- containing the middle infinite path --which would be very thin because trapped between both extreme infinite paths having the same asymptotic direction. The existence of such a subtree is suspicious since it is involved in a spatial competition that it neither wins (it fails to occupy a macroscopic part of the space) nor loses (it is unbounded). This heuristic sustains the following refinement of the Howard \& Newman's results:

\begin{conj}
\label{conj1}
For most of straight bi-dimensional geometric random trees having the non-crossing path property which are studied in the literature (including those cited above), there is no (random) direction targeted with more than two infinite paths with probability $1$.
\end{conj}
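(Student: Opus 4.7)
The plan is to deduce the two-dimensional conjecture for the hyperbolic RST from the stronger thick-trace theorem announced in the abstract: every infinite subtree of the RST almost surely has a set of asymptotic directions of positive Lebesgue measure on $\partial\mathbb{H}^{d+1}$. In dimension two this is exactly what is needed: if three infinite paths shared an asymptotic direction $u$, the non-crossing-path property would trap the subtree carried by the middle path between the two extreme ones, forcing its trace at infinity to reduce to $\{u\}$ and hence to have measure zero, contradicting the theorem. So the whole proof reduces to establishing thick trace.

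To prove the thick-trace statement, the strategy is to exploit the rich isometry group of $\mathbb{H}^{d+1}$ together with a local approximation of the RST by a stationary Directed Spanning Forest, in the spirit of Baccelli and Bordenave. The first step is to construct a hyperbolic DSF, Palm-distributed and invariant under the horocyclic subgroup fixing a chosen boundary point, and to show that the RST observed from a vertex far from the origin converges in distribution to this DSF. The question then becomes, in a stationary framework where ergodic arguments are available, whether an infinite subtree of the DSF necessarily spreads over a positive-measure set of boundary points.

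The core technical step is a renewal/Peierls argument at successive hyperbolic scales: define the "shadow" at scale $n$ of an infinite subtree as its intersection with the hyperbolic sphere of radius $n r_0$, and prove inductively that with probability bounded below, this shadow contains a positive density of well-separated points each carrying an infinite branch of the subtree. The exponential growth of hyperbolic balls makes this plausible: even a single surviving branch per scale corresponds to a non-degenerate visual angle at infinity, and a $0$--$1$ argument under the horocyclic action then turns the positive probability into an almost-sure lower bound on the measure of the trace.

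The main obstacle I expect is the competition between neighbouring subtrees: one must rule out that an infinite subtree keeps being pinched by its siblings, surviving only along a single thin hyperbolic tube whose visual angle at infinity shrinks to zero. Controlling this requires a two-sided mass-balance estimate showing that a subtree too thin to occupy a constant angular measure is too fragile to remain infinite: the Poisson points landing on either side would, with high probability, redirect its branches outward. Combining this estimate with a Borel--Cantelli argument along the scale sequence should upgrade the positive-probability statement into the almost-sure conclusion required for every infinite subtree simultaneously.
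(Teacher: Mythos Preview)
Your first paragraph is correct and matches the paper's deduction of Corollary~\ref{Cor:positivevolume}(iii) from Theorem~\ref{Thm:positivevolume} exactly: three infinite paths with a common asymptotic direction would trap the middle subtree between the two extreme paths, forcing its trace at infinity to have measure zero, in contradiction with the thick-trace theorem.

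Your proposed route to the thick-trace theorem itself, however, diverges substantially from the paper's and contains a genuine gap. The paper does not pass through a DSF approximation, horocyclic invariance, or any ergodic $0$--$1$ law. It argues directly on the RST via a martingale scheme: for each rational $(r_0;u_0)$ with nearest Poisson point $z_0$, one shows that conditionally on $\mathcal{N}_{B(r)}$ the probability of $\{\mathcal{D}^\infty(z_0)\neq\emptyset,\ \sigma(\mathcal{D}^\infty(z_0))=0\}$ is at most $1-\varepsilon$ for an unbounded set of radii $r$, with $\varepsilon>0$ independent of $r$ (Lemma~\ref{Lem:PositiveVolume}); martingale convergence then forces the event to have probability zero. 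The uniform $\varepsilon$ is produced by a \emph{local-modification} argument (Lemma~\ref{Lem:emptying}): one empties a deterministic region $U$ outside $B(r)$ so that a Poisson point $z$ far from $B(r)$, already known to carry positive trace with probability bounded below uniformly in $r$ (Lemma~\ref{Lem:unifF0}, proved via Cauchy--Schwarz and the MBD moment bounds of Lemma~\ref{lem:MBD}), becomes a descendant of $z_0$. A Peierls argument does appear, but only in the auxiliary Lemma~\ref{Lem:H-deltaUnbounded} guaranteeing infinitely many radii $r$ at which the modification can be anchored.

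The gap in your plan is the passage from ``positive probability'' to ``almost surely, for every vertex''. An ergodic $0$--$1$ argument under the horocyclic action---even granting that it can be made to act meaningfully on the rooted, hence non-stationary, RST---yields at best a statement about a \emph{typical} subtree, or one rooted at a deterministic location. Theorem~\ref{Thm:positivevolume} requires the conclusion simultaneously for \emph{every} Poisson vertex, and in your own first paragraph it is precisely a \emph{random} vertex (the root of the trapped middle path) whose trace must be positive. Ergodic arguments do not upgrade to such universal statements for free; the paper achieves this by running the conditional-probability argument at each rational $(r_0;u_0)$ and taking a countable union. Your sketch also leaves the DSF-to-RST transfer unresolved: the DSF approximates the RST only locally and in distribution, whereas thick trace is a global property of an RST subtree reaching out to $\partial\mathbb{H}^{d+1}$, so a distributional limit at a single scale does not obviously control it.
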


To our knowledge, this conjecture has been established in only one case: for the Directed LPP Tree with exponential weights in $\mathbb{N}^2$ by Coupier \cite{CoupierLPP}. His proof is based on a surprising coupling-- exhibited for the first time by Rost \cite{Rost} --between the LPP model and the Totally Asymmetric Simple Exclusion Process (abbreviated in the literature to TASEP) and on results on some special particles (called second class particles) of this particle system \cite{AAV}.

In this paper, we present a second example satisfying this conjecture, namely the hyperbolic Radial Spanning Tree introduced in \cite{flammant-RST}. See Fig. \ref{Fig:simulationRST}. Let us emphasize that the nature of this new example is completely different from the first one since it lies in a continuum hyperbolic space instead of $\mathbb{N}^2$.

\begin{figure}[!h]
\centering
\includegraphics[width=9cm,height=9cm]{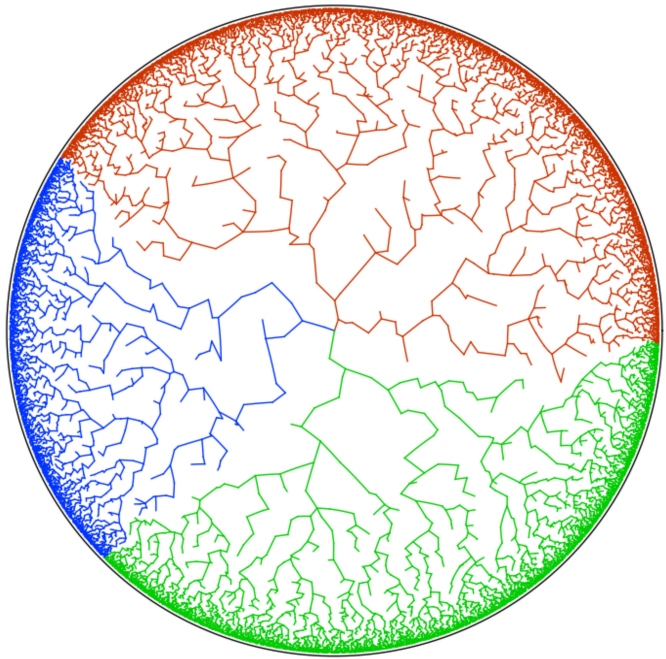}
\caption{Simulation of the two-dimensional hyperbolic RST, with $\lambda=30$, in the Poincaré disc model. The edges are represented by geodesics. The different connected components of the RST (apart from the root) are represented with different colors. This coloring allows to distinguish some random directions reached by two infinite paths, namely the three directions separating the three colored traces at infinity: see Remark \ref{Rmk:TwoPaths} for details.}
\label{Fig:simulationRST}
\end{figure}

The construction of the hyperbolic RST is the same as for the bi-dimensional Euclidean RST of Baccelli \& Bordenave \cite{BB07}, whatever the dimension $d+1\geq 2$. The set of vertices is given by a homogeneous Poisson Point Process (PPP) $\mathcal{N}$ of intensity $\lambda>0$. The RST rooted at the origin $0$ is the graph obtained by connecting each point $z \in \mathcal{N}$ to its parent $A(z)$, defined as the closest point to $z$ among all points $z' \in \mathcal{N} \cup \{0\}$ that are closer to the origin than $z$. This procedure defines a random radial tree rooted at $0$ which is \textit{straight} in $\mathbb{H}^{d+1}$; see \cite[Proposition 2.7]{flammant-RST} or Property \ref{Prop:FiniteDegree} below. This key property is available here thanks to the fact that the hyperbolic metric guarantees that angular deviations of RST paths decay exponentially fast with the distance to the origin. This feature entails in fact much more than the straightness of the hyperbolic RST. It also implies that there is a positive proportion of RST edges at level $r$ giving rise to infinite paths (Proposition \ref{Propo:DensitySigma>0} below). This later statement does not occur in the Euclidean case, where the probability that a given edge at level $r$ belongs to an infinite path tends to 0 as $r \to \infty$ \cite{baccellicoupiertran}.\\

Our main result (Theorem \ref{Thm:positivevolume}) holds in any dimension. Heuristically, we will show that the set of directions $u\in \mathbb{S}^d$ that are asymptotically reached by infinite paths stemming from an arbitrary Poisson point $z$, called the \textit{trace of $z$ at infinity}, is either empty or has positive measure. In the first case, the subtree rooted at $z$ is finite while in the second case, it is infinite and generates a thick trace at infinity. In dimension $2$, thanks to planarity and the non-crossing path property, it is easily deduced from Theorem \ref{Thm:positivevolume} that a.s. there is no direction reached by more than two infinite paths, i.e. Conjecture \ref{conj1}. See Corollary \ref{Cor:positivevolume}, Item $(iii)$.

The proof of Theorem \ref{Thm:positivevolume} can be summarized by the geometric construction depicted in Fig. \ref{fig:ConstructionU} and mainly relies on the fact that the probability with which a given Poisson point $z$ at level $r$ generates a thick trace at infinity (plus extra conditions as in particular a stabilization criteria) is bounded away from $0$ \textit{uniformly on} $r$ (Lemma \ref{Lem:unifF0}). This technical result is specific to the hyperbolic metric: this explains why we are able to prove Conjecture \ref{conj1} for the hyperbolic RST and not for its Euclidean counterpart (although the Euclidean RST contains in proportion fewer infinite paths than the hyperbolic RST). Lemma \ref{Lem:unifF0} also uses a control of path fluctuations developed in \cite{flammant-RST} (recalled Lemma \ref{lem:MBD}).\\

In Section \ref{Section:hyp_geom}, we recall shortly some results on the hyperbolic geometry that will be useful in the paper. In Section \ref{Section:MainResults}, we enounce our main result: Theorem \ref{Thm:positivevolume} for the general dimension $d+1$, $d\geq 1$, and its Corollary \ref{Cor:positivevolume} in dimension $2$, that answers the Conjecture \ref{conj1} for the hyperbolic Radial Spanning Tree, providing the first example in continuum space where this question can be answered. Section \ref{Section:PositiveVolume} presents the proofs of Theorem \ref{Thm:positivevolume} and of the technical Lemma \ref{Lem:unifF0} introduced in the previous paragraph. Section \ref{Section:UnifF0} is devoted to the proof of this later result. Finally, in Section \ref{Section:Densite>0}, we prove that there is a positive proportion of points of the tree at level $r$ that generates thick tracks at infinity.

\section{Hyperbolic geometry and notations}\label{Section:hyp_geom}

\noindent
\textbf{Generalities on hyperbolic geometry.} We refer to \cite{cannon,chavel,paupert,Ratcliffe} for a complete introduction to hyperbolic geometry. For $d \in \mathbb{N}^*=\{1,2,\dots\}$, the $(d+1)$-dimensional hyperbolic space denoted by $\mathbb{H}^{d+1}$ is a $(d+1)$-dimensional Riemannian manifold of constant negative curvature $-1$ that can be defined by several isometric models. One of them is the open-ball model $\mathfrak{D}$ (or Poincar\'e disc model in dimension $2$) consisting in the unit open-ball
\[
\mathfrak{D} := \{(x_1,\ldots,x_{d+1}) \in \mathbb{R}^{d+1} : \, x_1^2+\ldots+x_{d+1}^2 < 1\}
\]
endowed with the metric:
\[
ds_\mathfrak{D}^2 := 4\frac{dx_1^2+\ldots+dx_{n+1}^2}{(1-x_1^2-\ldots-x_{d+1}^2)^2} ~.
\]
The volume measure on $(\mathfrak{D},ds_\mathfrak{D}^2)$ is then given by $2^{d+1} dx_1\ldots dx_{d+1}/(1-x_1^2-\ldots-x_{d+1}^2)^{d+1}$.\\

A convenient way to represent points in the open-ball model is to use polar coordinates (w.r.t. the origin $0$). Any $z \in \mathfrak{D}$ can be written as $z=(r;u)$ where $r=d(z,0)$ is its distance to $0$ and $u \in \mathbb{S}^d$ is its direction. Let $\sigma$ be the spherical probability measure on $\mathbb{S}^d$ (in particular $\sigma(\mathbb{S}^d)=1$). In polar coordinates, the \textit{rescaled} volume measure $\Vol$, corresponding to the rescaled probability measure $\sigma$, becomes
\begin{eqnarray}
\label{E:dvolpolar}
d\Vol(r;u) = \sinh(r)^d \,dr \,d\sigma(u) ~.
\end{eqnarray}
The choice of rescaling of the volume measure is adopted to avoid writing the constant $2\pi^{(d+1)/2}/\Gamma((d+1)/2)$ that would appear a lot in the paper otherwise (this constant is the non-rescaled measure of $\mathbb{S}^d$, see  \cite[(III.3.10) p.125]{chavel}).\\

The hyperbolic space $\mathbb{H}^{d+1}$ is also naturally equipped with a set of \textit{points at infinity} denoted by $\partial \mathbb{H}^{d+1}$. In the open-ball model, this set is identified with the unit sphere $\mathbb{S}^d$. The distances are distorted in comparison with the Euclidean distance and become `smaller' when we approach the boundary $\partial \mathfrak{D}=\mathbb{S}^d$, which is at infinite hyperbolic distance from the center $0$. We also set $\overline{\mathbb{H}^{d+1}} := \mathbb{H}^{d+1} \cup \partial \mathbb{H}^{d+1}$ endowed with the topology given by the closed ball. 

In $(\mathfrak{D},ds_\mathfrak{D}^2)$, geodesics are of two types: either  diameters of $\mathfrak{D}$ or arcs perpendicular to the boundary $\partial \mathfrak{D}$. Also, this model is conformal in the sense that the hyperbolic angle between two geodesics is equal to the Euclidean angle between them, in the open ball representation. Another important fact about hyperbolic geometry is that all points and all directions play the same role: $\mathbb{H}^{d+1}$ is homogeneous and isotropic.\\

\noindent
\textbf{Notations.} We denote by $d(\cdot,\cdot)$ the hyperbolic distance in $\mathbb{H}^{d+1}$. For $z,z' \in \overline{\mathbb{H}^{d+1}}$, let $[z,z']$ be the geodesic between $z$ and $z'$. We will denote by $(z,z')$ the geodesic without the extremities $z$ and $z'$.

For $z \in \mathbb{H}^{d+1}$ and $r>0$, we respectively denote by $B(z,r) := \{z' \in \mathbb{H}^{d+1} : d(z,z')<r\}$ and $S(z,r) := \{z' \in \mathbb{H}^{d+1} : d(z,z')=r\}$ the hyperbolic open ball and hyperbolic sphere centered at $z$ with radius $r>0$. We will write $B(r):=B(0,r)$ and $S(r):=S(0,r)$ for short. Writting $\Vol(B(r)) = \int_{0}^{r} \sinh^d(\rho) d\rho$ (see e.g. \cite[p.79]{Ratcliffe}), it is easy to prove that there exists $c = c(d) \in (0,1)$ such that for any $r\geq 1$,
\begin{equation}
\label{InegVol}
c e^{dr} \leq \Vol(B(r)) \leq \nu e^{dr} ~.
\end{equation}
Let us also denote by $C(z,r,R) := \{z' \in \mathbb{H}^{d+1} : r<d(z,z')<R\}$ the annulus with radii $r<R$ and $C(r,R):=C(0,r,R)$.

For $z,z',z'' \in \overline{\mathbb{H}^{d+1}}$, $\widehat{zz'z''}$ is the measure of the corresponding (non-oriented) hyperbolic angle. For any $z \in \overline{\mathbb{H}^{d+1}}$ and $\theta>0$, $\Cone(z,\theta):=\{z' \in \mathbb{H}^{d+1} : \widehat{z 0 z'} \le \theta\}$ is defined as the cone of apex $0$, axis $z$ and aperture $\theta$. In addition, for $z \in \mathbb{H}^{d+1}$ and $r>0$, we will use the spherical cap
\begin{eqnarray}
\label{E:defBSR}
B_{S(r)}(z,\theta) := \Cone(z,\theta) \cap S(r) ~.
\end{eqnarray}
For its surface, note that there exists a constant $C=C(d)>0$ such that:
\begin{equation}
\label{area_cap}
\sigma\big(B_{S(r)}(z,\theta)\big) \leq C \theta^{d} ~.
\end{equation}

\section{Main results}
\label{Section:MainResults}

In the sequel, we pick some arbitrary origin point $0 \in \mathbb{H}^{d+1}$ (thought as the center of $\mathfrak{D}$ in the open-ball representation) to be the root of the hyperbolic RST that we now define.\\

\noindent
\textbf{The hyperbolic RST.} Let $\mathcal{N}$ be a homogeneous PPP of intensity $\lambda>0$ in $\mathbb{H}^{d+1}$. The definition of the hyperbolic RST is similar to the Euclidean case. This is a directed graph whose vertex set is $\mathcal{N} \cup \{0\}$ and in which each vertex $z \in \mathcal{N}$ is connected to the closest Poisson point among $(\NN\cup \{0\}) \cap B(r)$, with $r := d(0,z)$, for the hyperbolic distance.

\begin{definition}[Radial Spanning Tree in $\mathbb{H}^{d+1}$]
\label{Def:rsthyp}
The \emph{ancestor} of $z \in \mathcal{N}$ is defined as
\begin{equation}\label{def:A(z)}
A(z) := \argmin_{z' \in (\mathcal{N}\cup \{0\}) \cap B(d(0,z))} d(z',z) ~.
\end{equation}
The \emph{Radial Spanning Tree} (RST) in $\mathbb{H}^{d+1}$ is the directed graph $(V,\vec{E})$ where $V := \mathcal{N} \cup \{0\}$ and $\vec{E} := \{(z,A(z)) : z \in \mathcal{N}\}$.
\end{definition}

Since $\mathcal{N} \cup \{0\}$ contains no isosceles triangles with probability $1$, the ancestor $A(z)$ of any Poisson point $z$ is a.s. well-defined. In other words, any Poisson point admits only one outgoing edge, but possibly several ingoing edges. In any case, these ingoing edges are finitely many:

\begin{property}[Proposition 2.2 of \cite{flammant-RST}]
\label{Prop:FiniteDegree}
A.s. the RST is a tree rooted at $0$ where all vertices have finite degrees. In the bi-dimensional case ($d=1$), the union of geodesics $[z,A(z)]$, $z\in \NN$, is planar, i.e. whatever $z\not= z' \in \NN$, the geodesics $[z,A(z)]$ and $[z',A(z')]$ may only overlap on their endpoints.
\end{property}

For $z \in \mathcal{N}$ and $r > 0$, we also define 
\begin{equation}
\label{def:Bplus}
B^+(z,r) := B(z,r) \cap B(d(0,z)) \quad \mbox{ and } \quad B^+(z) := B^+(z,d(z,A(z))) ~.
\end{equation}
By construction of the ancestor, the random set $B^+(z)$ avoids the PPP $\mathcal{N}$. This fact is responsible for many difficulties when studying the RST. Indeed, when restarting from $A(z)=(r';u')$ and constructing the path forward (towards $0$), with probability $1$, $B(r') \cap B^+(z)$ is non-empty. This means that the geometric information used to determine $A(z)$ is still involved for next steps of the process, generating statistical dependencies.\\

A \textit{path} of the RST is a sequence (finite or not) of different vertices $(z_0,z_1,z_2\ldots)$ in $\NN\cup\{0\}$ such that $A(z_{n+1})=z_n$ for any $n\geq 0$. By \eqref{def:A(z)}, we have that for all $z\in \NN$, $d(0,A(z))<d(0,z)$. Applying this to $z=z_n$, $n\geq 1$, we obtain that $d(0,z_{n})<d(0,z_{n+1})$. The path $(z_0,z_1,z_2\ldots)$ can thus be viewed as an exploration of the RST starting at $z_0$ and moving away from $0$. We will say that the forward direction is towards $0$ and the backward direction is towards infinity. Every path can be started at $z_0=0$, but this is not an obligation.

The \textit{set of descendants} $\mathcal{D}(z)$ of a given vertex $z$ is made up of all vertices that can be reached by a backward finite path starting at $z$ (including $z$ itself by convention). Because a vertex can be the ancestor of no other vertex, all sets of descendants are not infinite.

As mentioned in the Introduction, a very important feature of the hyperbolic RST is its \textit{straightness}:

\begin{property}[Proposition 2.7 of \cite{flammant-RST}]
\label{Prop:straightness}
A.s. for any $\varepsilon>0$ there exists some $r_0>0$ such that, for any radius $r \ge r_0$ and for any vertex $z$ of the hyperbolic RST with $d(0,z) \ge r$, the set of descendants $\mathcal{D}(z)$ is contained in a cone of apex $0$ and aperture $e^{-(1-\varepsilon) r}$, i.e. for any $z',z'' \in \mathcal{D}(z)$, $\widehat{z'0z''} \le e^{-(1-\varepsilon) r}$.
\end{property}

\noindent
\textbf{Trace on the boundary $\partial \mathbb{H}^{d+1}$.} Let us consider a point $z\in \NN$ and its set of descendants $\mathcal{D}(z)$. When it is infinite, it contains (at least) an infinite path $(z_n)_{n\geq 0}$ by the finite degree property (Property \ref{Prop:FiniteDegree}). Theorem 1.1 of \cite{flammant-RST} then asserts that the path $(z_n)_{n\geq 0}$ admits an asymptotic direction $u \in \partial \mathbb{H}^{d+1}$. Thus, let us define by $\mathcal{D}^\infty(z)$ the set of asymptotic directions in $\partial \mathbb{H}^{d+1}$ that can be reached by the infinite paths of $\mathcal{D}(z)$. Roughly speaking, $\mathcal{D}^\infty(z)$ is the trace left by the set of descendants $\mathcal{D}(z)$ on the boundary $\partial \mathbb{H}^{d+1}$ (see the traces at infinity left by the children of $0$ in Fig. \ref{Fig:simulationRST}). We just have proved that:

\begin{property}
A.s. for any vertex $z\in \NN$,
\begin{equation}
\label{DetDinfty}
\# \mathcal{D}(z) = +\infty \, \Longleftrightarrow \, \mathcal{D}^\infty(z) \not= \emptyset ~.
\end{equation}
\end{property}

Our main result (Theorem \ref{Thm:positivevolume}) establishes a stronger statement: infinitely many descendants in $\mathcal{D}(z)$ actually implies that $\mathcal{D}(z)$ leaves a trace with positive volume at infinity, i.e. $\sigma(\mathcal{D}^{\infty}(z)) > 0$. In this case, we will say that $z$ generates a thick trace at infinity.

\begin{theorem}
\label{Thm:positivevolume}
Consider the hyperbolic RST in $\mathbb{H}^{d+1}$, for any dimension $d\geq 1$. A.s. for any vertex $z\in\NN$, either $z$ admits finitely many descendants or $\sigma(\mathcal{D}^{\infty}(z)) > 0$.
\end{theorem}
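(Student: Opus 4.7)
The plan is to derive Theorem \ref{Thm:positivevolume} from the uniform lower bound announced in Lemma \ref{Lem:unifF0}: for every Poisson point $w$ at every level $r$, the event $\mathcal{F}_0(w)$ asserting that $w$ both generates a thick trace at infinity and satisfies a stabilization criterion localized inside some finite ball $B(w,R)$ has probability at least some constant $p_0 > 0$, uniformly in $r$ (under the Palm distribution of $\mathcal{N}$ at $w$). Since the statement of the theorem is quantified over all vertices and $\mathcal{N}$ is countable, a union bound combined with the Campbell--Mecke formula reduces the task to: for a fixed Poisson point $z \in \mathcal{N}$, show that almost surely, on $\{\#\mathcal{D}(z) = +\infty\}$, one has $\sigma(\mathcal{D}^\infty(z)) > 0$. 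Moreover, monotonicity of the trace along the tree yields $\mathcal{D}^\infty(w) \subset \mathcal{D}^\infty(z)$ for any $w \in \mathcal{D}(z)$, so it is enough to exhibit a single descendant $w$ of $z$ for which $\mathcal{F}_0(w)$ is realized: such a $w$ automatically satisfies $\sigma(\mathcal{D}^\infty(w)) > 0$ by the very definition of $\mathcal{F}_0$.

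The core argument is then a Borel--Cantelli step applied along an infinite descendant path. On $\{\#\mathcal{D}(z) = +\infty\}$, Property \ref{Prop:FiniteDegree} and K\"onig's lemma yield an infinite path $z = z_0, z_1, z_2, \dots$ in $\mathcal{D}(z)$ whose levels $d(0,z_n)$ strictly increase to $+\infty$. I would then extract a subsequence $(z_{n_k})_k$ whose successive levels differ by more than $2R$; by straightness (Property \ref{Prop:straightness}) the $z_{n_k}$ are approximately aligned on a geodesic ray emanating from $0$, so this guarantees that the stabilization balls $B(z_{n_k},R)$ are pairwise disjoint. Given such disjointness, the events $\mathcal{F}_0(z_{n_k})$ depend on disjoint portions of $\mathcal{N}$ and are therefore independent conditionally on the positions of the $z_{n_k}$, each with conditional probability at least $p_0$. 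The second Borel--Cantelli lemma forces infinitely many $\mathcal{F}_0(z_{n_k})$ to hold almost surely, yielding the desired descendant of $z$.

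The step I expect to be the main obstacle is that the sequence $(z_n)$ is not a deterministic object: whether $z_n$ lies in $\mathcal{D}(z)$ depends on Poisson points far away and in particular on data inside the ball $B(z_n,R)$ on which $\mathcal{F}_0(z_n)$ is measurable, so naively treating ``$z_n \in \mathcal{D}(z)$'' and ``$\mathcal{F}_0(z_n)$'' as independent is not justified. The natural fix, suggested by the geometric construction of Fig.~\ref{fig:ConstructionU}, is an inside-out exploration: one reveals $\mathcal{N}$ only up to some level $r_{n_k}$, identifies $z_{n_k}$ as a revealed descendant of $z$ at level $r_{n_k}$, and only then probes the yet unrevealed ball $B(z_{n_k},R)$ to test $\mathcal{F}_0(z_{n_k})$. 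Independence of the PPP on disjoint regions, together with the locality built into the stabilization criterion of Lemma \ref{Lem:unifF0}, then produces a genuinely independent sequence of trials, each succeeding with probability at least $p_0$; almost surely one succeeds, and the resulting descendant provides the positive-measure trace at infinity.
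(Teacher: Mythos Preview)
Your proposal has a genuine gap, and it stems from a misreading of what Lemma~\ref{Lem:unifF0} actually provides. The event $F(r,h,\delta')$ in that lemma is \emph{not} attached to a Poisson point $w$ at level $r$, and it is \emph{not} measurable with respect to $\mathcal{N}\cap B(w,R)$ for any finite $R$. It concerns a deterministic point $z_2=(r+h;u)$ and asserts (i)~a unique Poisson point $z$ near $z_2$, (ii)~$\sigma(\mathcal{D}^\infty(z))>0$, (iii)~the stabilization $\Stab(r,h,\delta')$, and (iv)~$\mathcal{D}(z)\subset B(r+h+\delta')^\complement$. Item~(ii) depends on $\mathcal{N}$ all the way to infinity; the stabilization~(iii) does not localize the event to a finite ball around $z$, it only makes $F(r,h,\delta')$ independent of $\mathcal{N}_{B(r)}$, i.e.\ of what happens \emph{near the origin}. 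So there is no event ``$\mathcal{F}_0(w)$'' that is simultaneously measurable w.r.t.\ a finite ball $B(w,R)$ and implies $\sigma(\mathcal{D}^\infty(w))>0$. This breaks both your independence claim and your inside-out exploration: once you ``probe'' enough to decide whether $z_{n_k}$ has a thick trace, you have revealed the whole exterior, leaving no fresh randomness for the next trial.

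Two further ingredients are missing from your outline. First, the point $z$ produced by $F(r,h,\delta')$ need not be a descendant of $z_0$ at all; the paper supplies a surgical step (Lemma~\ref{Lem:emptying}): after emptying the region $U$ of \eqref{def:U}, the ancestor of $z$ becomes a descendant $z_1$ of $z_0$ at level $r$, and this is exactly where the hypothesis $r\in\mathcal{H}(\delta)$ (i.e.\ $d(z_1,A(z_1))\geq\delta$) is consumed. Second, because the ``trials'' at successive levels are genuinely dependent, the paper does \emph{not} use Borel--Cantelli. It instead shows (Lemma~\ref{Lem:PositiveVolume}) that $\P(\sigma(\mathcal{D}^\infty(z_0))>0\mid\mathcal{N}_{B(r)})\geq\eps$ for every $r\in\mathcal{H}(\delta)$, and concludes via the martingale convergence theorem: $\P(E(r_0;u_0)\mid\mathcal{N}_{B(r)})\to\ind_{E(r_0;u_0)}$ a.s., yet the left-hand side is $\leq 1-\eps$ along the unbounded set $\mathcal{H}(\delta)$, forcing $\P(E(r_0;u_0))=0$. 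Your exploration intuition is pointing toward this filtration-by-radius argument, but the correct tool is martingale convergence, not independent coin flips.
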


\noindent
\textbf{Consequence in $\mathbb{H}^{2}$.} Theorem \ref{Thm:positivevolume} holds in any dimension. In dimension $2$ (i.e. with $d=1$), combined to the non-crossing path property (Property \ref{Prop:FiniteDegree}), it leads to Conjecture \ref{conj1}: a.s. the RST does not contain three infinite paths with the same (random) asymptotic direction. This statement-- Item $(iii)$ of Corollary \ref{Cor:positivevolume} --completes the description of infinite paths and their asymptotic directions of the hyperbolic RST started in \cite{flammant-RST}. The first three items below are given by \cite[Theorem 1.1]{flammant-RST}. Their proofs are based on the strategy developed by Howard and Newman in \cite{HowardNewman} and on the \textit{straightness} of the hyperbolic RST. 

\begin{corollary}
\label{Cor:positivevolume}
The following properties concern the hyperbolic RST in $\mathbb{H}^2$.
\begin{itemize}
\item[$(o)$] A.s. any infinite path admits an asymptotic direction and, for any  $u \in \partial \mathbb{H}^2$, the RST contains an infinite path with asymptotic direction $u$.
\item[$(i)$] For any (deterministic) $u \in \partial \mathbb{H}^2$, the RST a.s. contains a unique infinite path with asymptotic direction $u$.
\item[$(ii)$] A.s. the subset of $\partial \mathbb{H}^2$ of asymptotic directions reached by at least two infinite paths is dense and countable in $\partial \mathbb{H}^2$.
\item[$(iii)$] A.s. no (random) asymptotic direction of $\partial \mathbb{H}^2$ is reached by more than two infinite paths.
\end{itemize}
\end{corollary}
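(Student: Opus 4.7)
The first three items $(o)$, $(i)$, $(ii)$ are already contained in \cite[Theorem 1.1]{flammant-RST}, so the proof plan only concerns the new statement $(iii)$. My strategy is to derive a contradiction from Theorem \ref{Thm:positivevolume}, using the non-crossing path property (Property \ref{Prop:FiniteDegree}, specific to dimension $d=1$) together with a planar Jordan-curve argument inside the closed disc $\overline{\mathbb{H}^2}$.

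Suppose, on an event of positive probability, that there exist three pairwise distinct infinite paths $\pi_1, \pi_2, \pi_3$ of the hyperbolic RST sharing a common asymptotic direction $u \in \partial \mathbb{H}^2$. Extending each path forward to $0$, the three rays become three distinct arcs from $0$ to $u$ in $\overline{\mathbb{H}^2}$; as three distinct rays in a rooted tree, they span a tripod with at most two branching vertices. Using the planar embedding of the RST and the cyclic order of the three rays around these branching vertices, I relabel so that $\pi_2$ is the \emph{middle ray}, in the following sense: setting $\bar \pi_i := \pi_i \cup \{u\}$, the Jordan curve $\Gamma$ obtained from $\bar\pi_1 \cup \bar\pi_3$ by removing the common initial segment of $\pi_1$ and $\pi_3$ separates $\overline{\mathbb{H}^2}$ into two open regions, one of which, $R$, contains the entire tail of $\pi_2$ past its last common vertex with $\pi_1\cup\pi_3$. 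Crucially, $\Gamma \cap \partial\mathbb{H}^2 = \{u\}$, since $\Gamma$ consists of two arcs joining a single finite vertex to the unique boundary point $u$, and therefore $\overline R \cap \partial\mathbb{H}^2 = \{u\}$.

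Now pick a Poisson vertex $z \in \pi_2$ lying strictly inside $R$ and beyond the last common vertex of $\pi_2$ with $\pi_1 \cup \pi_3$. The subtree $\mathcal{D}(z)$ contains the tail of $\pi_2$ past $z$ and is therefore infinite. By Property \ref{Prop:FiniteDegree}, any infinite path $(z_n)_{n \ge 0}$ in $\mathcal{D}(z)$ starts in $R$ and cannot cross $\pi_1$ nor $\pi_3$, so it remains in $R$. The straightness of the RST (Property \ref{Prop:straightness}) ensures that $(z_n)_{n\ge 0}$ admits an asymptotic direction, which must lie in $\overline R \cap \partial\mathbb{H}^2 = \{u\}$. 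Consequently $\mathcal{D}^\infty(z) \subseteq \{u\}$ and, since $\sigma$ is non-atomic on $\mathbb{S}^1$, $\sigma(\mathcal{D}^\infty(z)) = 0$. This contradicts Theorem \ref{Thm:positivevolume}, which forces $\sigma(\mathcal{D}^\infty(z)) > 0$ whenever $\mathcal{D}(z)$ is infinite; the contradiction rules out the existence of three infinite paths sharing an asymptotic direction.

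The main obstacle is the planar topology step: identifying the middle ray $\pi_2$ and checking that $\overline R \cap \partial\mathbb{H}^2 = \{u\}$. Once this is done carefully via the Jordan curve theorem in the topological disc $\overline{\mathbb{H}^2}$, combined with the cyclic order of three rays around the branching vertices of the tripod and with non-crossing, the probabilistic conclusion is an immediate consequence of Theorem \ref{Thm:positivevolume} and of the non-atomicity of $\sigma$ on $\mathbb{S}^1$. A side subtlety worth treating explicitly is the measurability of the bad event and the reduction of the a.s. statement (quantified over all random $u$) to the countable family of events indexed by a Poisson vertex $z$ generating an infinite but thin subtree, which is the formulation directly covered by Theorem \ref{Thm:positivevolume}.
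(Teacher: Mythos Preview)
Your proposal is correct and follows essentially the same approach as the paper: assume three infinite paths share a direction, use planarity and non-crossing to identify a middle path whose subtree is trapped, conclude that this subtree is infinite yet has $\sigma(\mathcal{D}^\infty(\cdot))=0$, and invoke Theorem~\ref{Thm:positivevolume} for the contradiction. The paper's proof is terser---it simply asserts that one path is ``trapped between the two other paths'' without spelling out the Jordan-curve argument or the reduction to disjoint tails---whereas you make these topological steps explicit; but the underlying idea is identical.
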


Item $(o)$ says that any $u$ in $\partial \mathbb{H}^2$ is reached by at least one infinite path and exactly one when $u$ is deterministic (Item $(i)$). There exist by Item $(ii)$ asymptotic directions which are reached by several infinite paths but these directions are random and few (only countable). Finally, Theorem \ref{Thm:positivevolume} specifies that there is no random asymptotic direction reached by more than two infinite paths.

\begin{proof}[Proof of Corollary \ref{Cor:positivevolume}, Item $(iii)$]
Let us assume that the hyperbolic RST contains three different infinite paths, say $(x_n)_{n\geq 0}$, $(y_n)_{n\geq 0}$ and $(z_n)_{n\geq 0}$ having the same asymptotic direction in $\partial \mathbb{H}^2$. Without loss of generality, we can also assume that these three paths have no vertices in common. By the non-crossing path property and planarity, one of these three infinite paths, say $(y_n)_{n\geq 0}$, is trapped between the two other paths which by hypothesis have the same asymptotic direction. This forces $\sigma(\mathcal{D}^{\infty}(y_0)) = 0$ while the set of descendants $\mathcal{D}(y_0)$ is infinite. This occurs with null probability thanks to Theorem \ref{Thm:positivevolume}.
\end{proof}

\begin{remark}
\label{Rmk:TwoPaths}
Let us show that the asymptotic direction separating the green and red traces at infinity on Fig. \ref{Fig:simulationRST}, is reached by two infinite paths (a green one and a red one). In the green (resp. red) infinite subtree, pick the leftmost (resp. rightmost) infinite path in the trigonometric sense. Such construction makes sense in dimension $2$ thanks to the non-crossing path property. These two paths $\pi_{\textrm{green}}$ and $\pi_{\textrm{red}}$ admit asymptotic directions, say $u_{\textrm{green}}$ and $u_{\textrm{red}}$ in $\partial \mathbb{H}^2$ by Corollary \ref{Cor:positivevolume}, Item $(o)$. If $u_{\textrm{green}}$ and $u_{\textrm{red}}$ were different, any asymptotic direction $u'$ located (strictly) between them shoud be reached by an infinite path $\pi$ thanks to Corollary \ref{Cor:positivevolume}, Item $(o)$. By construction of $\pi_{\textrm{green}}$ and $\pi_{\textrm{red}}$, the path $\pi$ could not be neither green or red, leading to a contradiction.\\
\end{remark}

\noindent
\textbf{Positive density with $\sigma(\mathcal{D}^\infty(\cdot)) > 0$.} Although the previous results concern the combinatorial structure of the RST, it will be useful in the proofs to represent the graph RST as a subset of $\mathbb{H}^{d+1}$, denoted by $\RST$, in which each edge $(z,A(z))$ is represented by the \textit{arc} $|\![z,A(z)]\!|$ (defined in Appendix \ref{Appendix:arcs}) and not by the (hyperbolic) geodesic $[z,A(z)]$:
\begin{equation}
\RST := \bigcup_{z\in \NN} |\![z,A(z)]\!| ~.
\end{equation}
This choice (somewhat unnatural) is motivated by the fact that the (hyperbolic) distance to the origin $0$ is monotonous along the arc $|\![z,A(z)]\!|$ which fails for the geodesic $[z,A(z)]$ (or for the Euclidean segment between $z$ and its ancestor $A(z)$). In particular each arc $|\![z,A(z)]\!|$ crosses any given sphere at most once. Thus, we define the RST \textit{at level} $r>0$ as the following random set:
\begin{equation}
\mathcal{L}_r := \RST \cap S(r) ~.
\end{equation}
Elements of $\mathcal{L}_r$ are a.s. not Poisson points on $S(r)$ with probability $1$. However we extend to elements of $\mathcal{L}_r$ the notations $\mathcal{D}(\cdot)$ and $\mathcal{D}^\infty(\cdot)$ as follows. For $z\in \mathcal{L}_r$, we denote by $z_{\downarrow}$ the Poisson point whose arc $|\![z_{\downarrow}, A(z_{\downarrow})]\!|$ crosses $S(r)$ at $z$: with a slight abuse of notations, $z_{\downarrow}$ can be defined without ambiguity (see Appendix). Then we set $\mathcal{D}(z)=\mathcal{D}(z_{\downarrow})$ and $\mathcal{D}^\infty(z)=\mathcal{D}^\infty(z_{\downarrow})$.

This section ends with a density result. Proposition \ref{Propo:DensitySigma>0} heuristically says that in expectation a macroscopic proportion of elements of $\mathcal{L}_r$ generates a thick trace at infinity:

\begin{proposition}
\label{Propo:DensitySigma>0}
There exists $c=c(d)>0$ such that for any $r>0$,
\[
\E \big[ \# \{ z \in \mathcal{L}_{r} : \, \sigma(\mathcal{D}^{\infty}(z)) > 0 \} \big] \geq c \, \E \big[ \# \mathcal{L}_r \big] ~.
\]
\end{proposition}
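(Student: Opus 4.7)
The plan is to compare the two expectations via the Slivnyak--Mecke formula, which yields
\begin{equation*}
\E[\#\mathcal{L}_r] = \lambda \int_{\mathbb{H}^{d+1}\setminus B(r)} \P\big(A(z)\in B(r)\big) \, d\Vol(z)
\end{equation*}
and
\begin{equation*}
\E\big[\#\{z\in \mathcal{L}_r:\sigma(\mathcal{D}^\infty(z))>0\}\big] = \lambda \int_{\mathbb{H}^{d+1}\setminus B(r)} \P\big(A(z)\in B(r),\, \sigma(\mathcal{D}^\infty(z))>0\big) \, d\Vol(z),
\end{equation*}
where the ancestor and descendants of the frozen point $z$ are computed in $\NN \cup \{z,0\}$. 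Writing $\rho:=d(0,z)$ and using isotropy, one reduces both sides to one-dimensional integrals weighted by $\sinh^d(\rho)\,d\rho$, and the strategy is to show that (a) the first integral is $O(e^{dr})$ while (b) the second is $\Omega(e^{dr})$, with constants independent of $r$.

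For (a), I would use that $A(z)\in B(r)$ forces $d(z,A(z))\geq \rho-r$ and hence that $B^+(z,\rho-r)$ must be free of Poisson points. A routine hyperbolic computation gives $\Vol(B^+(z,s))\geq c\, s^{d+1}$ for $0\leq s\leq 1$, so $\P(A(z)\in B(r))\leq \exp(-\lambda c(\rho-r)^{d+1})$. After the change of variables $u=\rho-r$ and the bound $\sinh^d(\rho)\leq C e^{d(r+u)}$ derived from \eqref{InegVol}, the $u$-integral converges and the factor $e^{dr}$ factors out, giving $\E[\#\mathcal{L}_r]\leq C'\,e^{dr}$.

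For (b), I would restrict the integral to the thin annulus $C(r,r+1)$ and bound the joint probability from below by a positive constant uniformly in $r$. Two events are in play for a frozen $z$ with $\rho\in(r,r+1)$: (i) $\{A(z)\in B(r)\}$, which depends only on the restriction of $\NN$ to the open ball $B(\rho)$; and (ii) the stabilized thick-trace event $F_0(z)\subseteq \{\sigma(\mathcal{D}^\infty(z))>0\}$ supplied by Lemma~\ref{Lem:unifF0}, whose stabilization criterion ensures it depends only on the restriction of $\NN$ to a region contained in $\mathbb{H}^{d+1}\setminus B(\rho)$. Being functions of $\NN$ on disjoint regions, these events are independent under the Palm distribution. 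Lemma~\ref{Lem:unifF0} provides $\P(F_0(z))\geq c_0>0$ uniformly in $\rho$; and for (i), a small ball of fixed radius around the point $w\in [0,z]$ at hyperbolic distance $2$ from $z$ lies inside both $B(z,4)$ and $B(r)$ (for $r\geq 2$), so a standard two-ball Poisson argument (requiring one point in $B(z,4)\cap B(r)$ and none in $B(z,4)\cap (B(\rho)\setminus B(r))$) yields $\P(A(z)\in B(r))\geq c_1>0$. Multiplying these lower bounds and integrating over $C(r,r+1)$, of volume $\Omega(e^{dr})$ by \eqref{InegVol}, gives (b); combined with (a), the proposition follows with $c=c_0 c_1/C'$.

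The main obstacle is the independence step in (b): it rests on the precise formulation of the stabilization criterion built into $F_0(z)$, namely that the configuration witnessing $\sigma(\mathcal{D}^\infty(z))>0$ can be confined to a conic ``forward'' region above $z$ disjoint from $B(\rho)$. This is exactly what is engineered in the proof of Lemma~\ref{Lem:unifF0}, using the exponential decay of angular fluctuations of RST paths in hyperbolic geometry recorded in Lemma~\ref{lem:MBD}. Once those ingredients are taken for granted, the remainder of the argument is pure hyperbolic volume bookkeeping.
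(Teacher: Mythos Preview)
Your upper bound (a) is fine (once you also treat the range $\rho-r\ge 1$ with the exponential lower bound \eqref{LowBoundB+} on $\Vol(B^+)$), but the independence step in (b) does not go through as stated, and this is a genuine gap.

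Lemma~\ref{Lem:unifF0} does \emph{not} furnish, for a Palm point $z$ at radius $\rho$, a sub-event $F_0(z)\subseteq\{\sigma(\mathcal{D}^\infty(z))>0\}$ that is measurable with respect to $\NN\cap B(\rho)^\complement$. The event $F(r,h,\delta')$ in that lemma concerns a Poisson point sitting near $z_2=(r+h;u)$ with $h\ge h_0$ large, and the stabilization clause $\Stab(r,h,\delta')$ only shields the descendant set from the configuration inside $B(r)$, i.e.\ a ball of radius $r$, not $r+h$. The gap $h$ is precisely what makes stabilization possible (Lemma~\ref{Lem:LimitDeter} needs $h\to\infty$). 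You are effectively asking for $h=0$: a point on $S(\rho)$ whose descendant set is insensitive to $\NN\cap B(\rho)$. But this fails: for any $z'$ just outside $S(\rho)$ and at nonzero angle from $z$, the set $B^+(z',d(z,z'))$ genuinely dips into $B(\rho)$, so whether $A(z')=z$ (and hence whether $z'\in\mathcal{D}(z)$) depends on the configuration there. No event of the form you describe is supplied by Lemma~\ref{Lem:unifF0}, and it is not clear one with uniformly positive probability exists.

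The paper's argument sidesteps this decoupling issue entirely. For the lower bound it uses Cauchy--Schwarz in the form
\[
\E\Big[\sum_{z\in\mathcal{L}_r}\sigma(\mathcal{D}^\infty(z))\Big]^2
\le
\E\big[\#\{z\in\mathcal{L}_r:\sigma(\mathcal{D}^\infty(z))>0\}\big]\cdot
\E\Big[\sum_{z\in\mathcal{L}_r}\sigma(\mathcal{D}^\infty(z))^2\Big],
\]
then bounds the left side below by $1$ via the global fact that every boundary direction is reached by some infinite path (so $\sum_{z\in\mathcal{L}_r}\sigma(\mathcal{D}^\infty(z))\ge\sigma(\mathbb{S}^d)=1$), and bounds the second factor above by $Ce^{-dr}$ via $\sigma(\mathcal{D}^\infty(z))\le C\,\MBD_r^\infty(z)^d$ and Lemma~\ref{lem:MBD}. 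This yields $\E[\#\{z\in\mathcal{L}_r:\sigma(\mathcal{D}^\infty(z))>0\}]\ge ce^{dr}$ directly, after which the upper bound $\E[\#\mathcal{L}_r]\le Ce^{dr}$ (from Lemma~\ref{lem:MomLr} and the Covering Lemma) finishes the proof. No Palm decoupling is needed; the key input is the surjectivity of asymptotic directions, which your approach does not use.
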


Proposition \ref{Propo:DensitySigma>0} contrasts with several Euclidean bi-dimensional trees \cite{ahlberghansonhoffman,coupier_sublin}, including the Euclidean RST \cite{baccellicoupiertran}, where the proportion of edges at level $r$ belonging to infinite paths is negligible. 

Proposition \ref{Propo:DensitySigma>0} is an immediate consequence of some intermediate results leading to Theorem \ref{Thm:positivevolume}, and its proof is done in Section \ref{Section:Densite>0}. It could be improved in several directions (an almost sure result rather than in expectation, a positive limit of the ratio rather than a positive $\liminf$ etc.) but this is not the goal of the current work.

\section{Proof of Theorem \ref{Thm:positivevolume}}
\label{Section:PositiveVolume}

\subsection{Global strategy}

Let $r_0\in (0,1)$, $u_0 \in \mathbb{S}^d$ and select the closest Poisson point to $(r_0 ; u_0)$ (with polar coordinates) denoted by
\[
z_0 := \argmin_{z\in\mathcal{N}} d(z,(r_0 ; u_0)) ~.
\]
Consider the event $E(r_0 ; u_0)$ on which the set of points at infinity $\mathcal{D}^\infty(z_0)$ reached by descendants of $z_0$ is nonempty but with null volume:
\begin{equation}
E(r_0 ; u_0) := \big\{ \mathcal{D}^\infty(z_0) \not= \emptyset \, \mbox{ and } \, \sigma \big( \mathcal{D}^\infty(z_0) \big) = 0 \big\} ~.
\end{equation}Our purpose is to prove that $\P(E(r_0;u_0))=0$.\\

For $\delta>0$, let us consider the set $\mathcal{G}(\delta)$ of descendants $z$ of $z_0$ whose ancestor $A(z)$ is not too close to $z$:
\[
\mathcal{G}(\delta) := \big\{ z \in \mathcal{D}(z_0) : \, d(z,A(z)) \geq \delta \big\} ~.
\]
Let $\mathcal{H}(\delta)$ be the set of corresponding radii:
\[
\mathcal{H}(\delta) := \big\{ r>0 : \, \exists u\in \mathbb{S}^d , (r;u) \in \mathcal{G}(\delta) \big\} ~.
\]
The next result states that, for $\delta$ small enough, if $z_0$ admits infinitely many descendants (or in an equivalent way $\mathcal{D}^\infty(z_0)\not=\emptyset$) then infinitely many of them are in $\mathcal{G}(\delta)$:

\begin{lemma}
\label{Lem:H-deltaUnbounded}
For any $\delta>0$ small enough, $\mathcal{D}^\infty(z_0)\not=\emptyset$ a.s. implies that $\mathcal{H}(\delta)$ is unbounded.
\end{lemma}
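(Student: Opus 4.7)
The plan is to exploit sub-criticality: for $\delta$ small, the expected number of ``short-edge chains'' of length $n$ starting from any point decays geometrically, which rules out infinite short-edge chains. Choose $\delta > 0$ small enough so that $m := \lambda\,V_\delta < 1$, where $V_\delta := \Vol(B(0,\delta)) = \int_0^\delta \sinh^d(\rho)\,d\rho$. Such $\delta$ exists since $V_\delta \sim c\delta^{d+1}$ as $\delta \to 0$, and $V_\delta$ does not depend on the center by homogeneity. Call a \emph{short-edge chain of length $n$ from $w_0$} any sequence $(w_0,w_1,\dots,w_n)$ of distinct points with $w_1,\dots,w_n \in \NN$ and $d(w_i,w_{i+1}) < \delta$ for every $0 \leq i < n$; I deliberately drop any RST constraint (such as $A(w_{i+1}) = w_i$). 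By Campbell's formula,
\[
\E\big[\#\{\text{short-edge chains of length } n \text{ from } w_0\}\big] \leq \int\!\cdots\!\int \prod_{i=0}^{n-1}\ind\{d(w_i,w_{i+1})<\delta\}\,\lambda^n\,d\Vol(w_1)\cdots d\Vol(w_n) \leq m^n.
\]
The same bound holds under the Palm distribution $\P^{w_0}$ by Slivnyak's theorem. Letting $I(w)$ denote the event that an infinite short-edge chain starts at $w$, it follows that $\P^{w_0}(I(w_0)) \leq \lim_n m^n = 0$.

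Next, I would upgrade this to a global statement. By Mecke's formula, for any ball $K_j$ in a countable cover of $\H^{d+1}$, $\E\big[\#\{w \in \NN \cap K_j : I(w)\}\big] = \lambda\int_{K_j} \P^w(I(w))\,d\Vol(w) = 0$. A countable union over $j$ yields: a.s.\ no Poisson point is the origin of an infinite short-edge chain.

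Finally, suppose $\mathcal{D}^\infty(z_0) \neq \emptyset$ but $\mathcal{H}(\delta) \subset [0,R]$ for some (random) $R > 0$. Then $\mathcal{G}(\delta) \subset B(0,R)$, so every descendant of $z_0$ lying outside $B(0,R)$ has parent-edge of length $<\delta$. By \eqref{DetDinfty} and Property \ref{Prop:FiniteDegree}, $\mathcal{D}(z_0)$ is infinite and locally finite, so König's lemma furnishes an infinite path $(z_n)_{n\geq 0} \subset \mathcal{D}(z_0)$; along it the radii $d(0,z_n)$ are strictly increasing and cannot stay bounded (local finiteness of $\NN$), hence $d(0,z_n) \to +\infty$. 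Fix $N$ with $d(0,z_N) > R$: for every $n \geq N$, $z_n$ lies in $\mathcal{D}(z_0)$ at radius $>R$, so $d(z_n,z_{n-1}) = d(z_n,A(z_n)) < \delta$. Thus $(z_N,z_{N+1},\dots)$ is an infinite short-edge chain starting at the Poisson point $z_N$, contradicting the previous paragraph. Hence $\mathcal{H}(\delta)$ is unbounded whenever $\mathcal{D}^\infty(z_0) \neq \emptyset$.

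The main obstacle is that the true ``short-edge subtree'' of the RST carries tangled offspring dependencies (the ancestor relation $A(w_{i+1}) = w_i$ forces emptiness of $B^+$ regions that can overlap between successive vertices), making a direct branching-process comparison awkward. The key trick bypassing this is to forget the ancestor constraint and retain only the distance constraint, yielding the weaker but clean event of a \emph{Poisson-chain} of short steps, for which Campbell's formula gives the tight product bound $m^n$ that closes the argument.
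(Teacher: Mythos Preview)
Your proof is correct and takes a genuinely different, more direct route than the paper. The paper sets up a block percolation scheme: it tiles $\mathbb{H}^{d+1}$ with blocks $B_{r,m}$ using the Covering Lemma, declares a block $\delta$-bad if it contains a Poisson point with a short edge, and runs a Peierls argument (bounding the number of adjacent-block paths by $C_2^k$ and using independence of non-adjacent blocks) to show that for $\delta$ small the bad blocks form only finite clusters; an infinite RST path must then escape each such cluster, and each escape produces an edge of length $\geq\delta$.

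You bypass all of this geometry. By relaxing the ancestor constraint to the mere distance constraint $d(w_i,w_{i+1})<\delta$, you obtain a quantity whose expected count is handled cleanly by the multivariate Mecke formula, giving the exact product bound $m^n$ with $m=\lambda\Vol(B(\delta))$. This is essentially a subcritical Poisson--Boolean/branching estimate, and it needs nothing beyond homogeneity of the volume measure (so that $V_\delta$ is center-independent). The paper's block argument is more pictorial and reusable for other percolation-flavoured statements, but your argument is shorter, avoids the Covering Lemma and the adjacency bookkeeping entirely, and would transfer verbatim to any homogeneous metric measure space. Both approaches make the same final deduction: an infinite RST path with eventually only short edges yields an infinite short-step object, which the subcriticality rules out.
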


For any radius $r>0$, let $\mathcal{N}_{B(r)} := \mathcal{N}\cap B(r)$ be the PPP $\NN$ restricted to the ball $B(r)$. The following Lemma \ref{Lem:PositiveVolume} says that a.s. on $\mathcal{D}^\infty(z_0)\not=\emptyset$ the random variable $\mathbb{P}(\sigma(\mathcal{D}^\infty(z_0)) > 0 \,|\, \mathcal{N}_{B(r)})$ is  bounded away from $0$ for all radii $r\in\mathcal{H}(\delta)$ and uniformly on those radii.

\begin{lemma}
\label{Lem:PositiveVolume}
For any $\delta>0$ small enough, there exists $\eps>0$ such that, a.s. on $\mathcal{D}^\infty(z_0)\not=\emptyset$,
\begin{equation}
\forall r \in \mathcal{H}(\delta) , \, \mathbb{P} \big( \sigma(\mathcal{D}^\infty(z_0)) > 0 \,|\, \mathcal{N}_{B(r)} \big) \geq \eps ~.
\end{equation}
\end{lemma}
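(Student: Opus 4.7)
The plan is to restart the tree construction from a well-chosen Poisson point at level $r$ and invoke the uniform lower bound of Lemma \ref{Lem:unifF0}. By Lemma \ref{Lem:H-deltaUnbounded}, on the event $\mathcal{D}^\infty(z_0)\neq\emptyset$ and for $\delta$ small enough, the random set $\mathcal{H}(\delta)$ is unbounded. For each $r\in\mathcal{H}(\delta)$, I would select by a deterministic rule (e.g.\ a fixed lexicographic ordering in polar coordinates) a Poisson point $z=z(r)\in\mathcal{G}(\delta)$ at level $r$; this selection is measurable with respect to $\mathcal{N}\cap\overline{B(r)}$, and thus essentially with respect to $\mathcal{N}_{B(r)}$ up to the null event that a Poisson point lies exactly on $S(r)$. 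Since $z\in\mathcal{D}(z_0)$, one has $\mathcal{D}^\infty(z)\subseteq\mathcal{D}^\infty(z_0)$, and it suffices to establish the uniform bound
\[
\P\bigl(\sigma(\mathcal{D}^\infty(z))>0 \,\big|\, \mathcal{N}_{B(r)}\bigr)\geq \eps.
\]

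The key ingredient is Lemma \ref{Lem:unifF0}, which I expect to deliver, for any Poisson point $w$ at level $r$, a local good event $F_0(w)$ measurable with respect to the Poisson configuration inside a bounded region $\mathcal{R}(w)\subset B(r)^c$ (typically a truncated forward cone above $w$, as pictured in Fig.~\ref{fig:ConstructionU}), such that $F_0(w)\subseteq\{\sigma(\mathcal{D}^\infty(w))>0\}$ and $\P(F_0(w))\geq \eps_0$ with $\eps_0>0$ independent of $r$. The constraint $d(z,A(z))\geq \delta$ defining $\mathcal{G}(\delta)$ dictates the choice of $\delta$: it is there to guarantee that $\mathcal{R}(z)$ lies outside $B^+(z)$, so that the emptiness of $B^+(z)$ forced by $\mathcal{N}_{B(r)}$ does not affect the definition or the probability of $F_0(z)$. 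Since $B^+(z)\subseteq B(r)$, the restriction $\mathcal{N}\cap\mathcal{R}(z)$ is, conditionally on $\mathcal{N}_{B(r)}$, a PPP of intensity $\lambda$, and the stabilization encoded in Lemma \ref{Lem:unifF0} ensures that on $F_0(z)$ the subtree of $z$ certifying the thick trace uses only ancestors inside $\mathcal{R}(z)\cup\{z\}$. Therefore
\[
\P\bigl(\sigma(\mathcal{D}^\infty(z))>0\,\big|\,\mathcal{N}_{B(r)}\bigr) \geq \P\bigl(F_0(z)\,\big|\,\mathcal{N}_{B(r)}\bigr) = \P(F_0(z)) \geq \eps_0,
\]
uniformly in $r\in\mathcal{H}(\delta)$.

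The main obstacle I anticipate is verifying that the stabilization built into the ``extra conditions'' of Lemma \ref{Lem:unifF0} truly insulates $F_0(z)$ from the interior configuration $\mathcal{N}_{B(r)}$. One must rule out that, on $F_0(z)$, a Poisson point in $B(r)\setminus B^+(z)$ ends up being selected as the ancestor of some vertex in the subtree that witnesses the thick trace. I would expect this to follow from combining the angular straightness of the RST (Property \ref{Prop:straightness}) with a careful geometric description of $\mathcal{R}(z)$, ensuring that every candidate ancestor for a point of $\mathcal{R}(z)\cap\mathcal{N}$ already lies in $\mathcal{R}(z)\cup\{z\}$. Once this insulation step is carried out, the rest of the argument reduces to the short conditional-probability computation above.
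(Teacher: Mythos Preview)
Your overall plan---restart from a point $z_1\in\mathcal G(\delta)$ at level $r$ and invoke Lemma~\ref{Lem:unifF0}---matches the paper's, but you have misread what that lemma actually supplies, and the ``main obstacle'' you single out is not the right one. The event $F(r,h,\delta')$ of Lemma~\ref{Lem:unifF0} is \emph{not} of the form $F_0(z_1)\subset\{\sigma(\mathcal D^\infty(z_1))>0\}$. It concerns a Poisson point $z$ in a small ball $B(z_2,\delta')$ centred at the deterministic point $z_2=(r+h;u)$, i.e.\ at height $h$ \emph{above} $z_1$, and asserts that this $z$ has a thick trace and that its subtree $\mathcal D(z)$ is insensitive to $\mathcal N_{B(r)}$ (the $\Stab$ condition). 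On $F(r,h,\delta')$ the point $z$ is typically \emph{not} a descendant of $z_1$: its ancestor is whichever Poisson point in the annulus between levels $r$ and $r+h$ happens to be closest. Hence $\mathcal D^\infty(z)$ need not be contained in $\mathcal D^\infty(z_0)$, and your claimed inclusion $F_0(z_1)\subset\{\sigma(\mathcal D^\infty(z_1))>0\}$ does not follow from Lemma~\ref{Lem:unifF0}. Nor is the event measurable with respect to any \emph{bounded} region: item~$(ii)$ looks at the entire infinite subtree.

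The step you are missing is Lemma~\ref{Lem:emptying} together with a short deletion argument. One introduces a bounded region $U\subset B(r)^c$ (the portion of the annulus $C(r,r+h+\delta')$ inside a cone around the direction $u$, minus $B(z_2,\delta')$) and proves geometrically that for every $\eta'\in F(r,h,\delta')$ with $\eta'_{B(r)}=\eta_{B(r)}$, deleting $\eta'_U$ forces $A(z)[\eta'_{U^c}]=z_1$, hence $\sigma(\mathcal D^\infty(z_0))[\eta'_{U^c}]>0$. This is exactly where the hypothesis $z_1\in\mathcal G(\delta)$ enters: the empty pocket $B^+(z_1,\delta)\cap\mathcal N=\emptyset$ guarantees (for $\delta'<\delta/3$) that $B^+(z,d(z,z_1))\cap B(r)\subset B^+(z_1,\delta)$, so once $U$ is cleared $z_1$ becomes the nearest candidate ancestor of $z$. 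Your reading of the role of $\delta$ (``to guarantee $\mathcal R(z)$ lies outside $B^+(z)$'') is not the mechanism at work. The argument then closes via
\[
\eps \le \P\big(F(r,h,\delta')\,\big|\,\mathcal N_{B(r)}\big) \le \P\big(\sigma(\mathcal D^\infty(z_0))[\eta'_{U^c}]>0\,\big|\,\mathcal N_{B(r)}\big) \le \P\big(\sigma(\mathcal D^\infty(z_0))>0\,\big|\,\mathcal N_{B(r)}\big),
\]
the first inequality holding because $\Stab$ makes $F(r,h,\delta')$ measurable with respect to $\mathcal N\cap B(r)^c$, and the last because removing the points in $U$ has the same law as conditioning the PPP on $\{\eta'_U=\emptyset\}$.
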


As shown below, Lemmas \ref{Lem:H-deltaUnbounded} and \ref{Lem:PositiveVolume} together imply $\P(E(r_0 ; u_0))=0$ from which Theorem \ref{Thm:positivevolume} immediatly follows. The proofs of Lemmas \ref{Lem:H-deltaUnbounded} and \ref{Lem:PositiveVolume} are respectively postponed to Sections \ref{Sect:H-deltaUnbounded} and \ref{Sect:PositiveVolume}.

\begin{proof}[Proof of Theorem \ref{Thm:positivevolume}]
Choose parameters $\delta$ and $\eps$ such that both Lemmas \ref{Lem:H-deltaUnbounded} and \ref{Lem:PositiveVolume} hold. The second lemma says that for this choice of $\delta $ and $\eps$, we have a.s. on $\{\mathcal{D}^\infty(z_0)\not= \emptyset \}$ and for any $r \in \mathcal{H}(\delta)$,
\[
\mathbb{P} \big( E(r_0 ; u_0) \,|\, \mathcal{N}_{B(r)} \big) \leq \mathbb{P} \big( \sigma(\mathcal{D}^\infty(z_0)) = 0 \,|\, \mathcal{N}_{B(r)} \big) \leq 1 - \eps ~.
\]
Since $\eps$ is uniform on $r \in \mathcal{H}(\delta)$ and since we work on $\{\mathcal{D}^\infty(z_0)\not= \emptyset\}$, the set $\mathcal{H}(\delta)$ is a.s. unbounded. We can take the $\liminf$: a.s. on $\{\mathcal{D}^\infty(z_0)\not= \emptyset\}$,
\begin{equation}
\label{LiminfE(r0)}
\liminf_{r\to\infty} \mathbb{P} \big( E(r_0 ; u_0) \,|\, \mathcal{N}_{B(r)} \big) \leq 1 - \eps ~.
\end{equation}
But the martingale convergence theorem asserts that:
\begin{equation}
\label{MartingaleCVTH}
\lim_{r \to \infty} \mathbb{P} \big( E(r_0 ; u_0) \,|\, \mathcal{N}_{B(r)} \big) = \ind_{E(r_0 ; u_0)}.
\end{equation}
So, on $E(r_0 ; u_0)\subset \{\mathcal{D}^\infty(z_0)\not=\emptyset\}$, the limit in \eqref{MartingaleCVTH} equals 1. Thus, both statements (\ref{LiminfE(r0)}) and (\ref{MartingaleCVTH}) are compatible only if $\mathbb{P}(E(r_0 ; u_0))=0$.

Hence, the union of the events $E(r_0 ; u_0)$ with rational radius $r_0$ and rational direction $u_0$ has null probability too. Since any Poisson point is the closest one to some rational element $(r_0 ; u_0)$ of $\mathbb{H}^{d+1}$, we can conclude that with probability $1$, any vertex $z \in \NN$ satisfies either $\mathcal{D}^\infty(z) = \emptyset$ (or $\mathcal{D}(z)$ is finite in an equivalent way) or $\sigma(\mathcal{D}^\infty(z))$ is (strictly) positive.
\end{proof}

\subsection{Proof of Lemma \ref{Lem:H-deltaUnbounded}}
\label{Sect:H-deltaUnbounded}

The proof of Lemma \ref{Lem:H-deltaUnbounded} is based on a percolation argument. To set it up, we first need a covering of the hyperbolic space $\mathbb{H}^{d+1}$ (except in the vicinity of the origin) with a uniform control of overlappings. This will be done using the Covering Lemma (below). This classical result will be used several times in this paper and is stated in \cite[Lemma 4.2]{flammant-RST}.

\begin{lemma}[Covering Lemma]
\label{Lem:covering}
There exists a covering constant $K=K(d)>0$ such that, for any {$r >0$}, there exists a collection of $N(r)$ points $z_1,...,z_{N(r)} \in S(r)$ such that:
\begin{itemize}
\item[(a)] $\bigcup_{1 \le i \le N(r)} B_{S(r)}(z_i , e^{-r}) = S(r)$,
\item[(b)] $\forall z \in S(r)$, $\# \{1 \le i \le N(r) : z \in B_{S(r)}(z_i,e^{-r})\} \leq K$.
\end{itemize}
Moreover, there exists $C=C(K,d)>0$ such that, for any {$r >0$}, $z \in S(r)$ and $A \ge 1$, the number of caps overlapping $B_{S(r)}(z , Ae^{-r})$ is bounded by $C A^d$:
\[
\# \Big\{ 1 \le i \le N(r) : \, B_{S(r)}(z_i,e^{-r}) \cap B_{S(r)}(z,Ae^{-r}) \not= \emptyset \Big\} \leq C A^d ~.
\]
In particular (taking $A := \pi e^r$ in the previous inequality), $N(r)\leq C e^{dr}$.
\end{lemma}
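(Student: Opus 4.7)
The plan is a standard greedy maximal packing argument on the hyperbolic sphere $S(r)$, carried out with respect to the angular distance $(z,z')\mapsto \widehat{z\,0\,z'}$ on $\mathbb{S}^d$ (identified with $S(r)$ by radial projection), combined with the cap area bound \eqref{area_cap} and its matching lower bound.

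First, I would construct the centers by picking, via compactness and greedy selection, a maximal collection $z_1,\ldots,z_{N(r)} \in S(r)$ such that the half-caps $B_{S(r)}(z_i,e^{-r}/2)$ are pairwise disjoint. Finiteness of this collection is automatic from $\sigma(S(r))=1$ together with a uniform positive lower bound on the area of each half-cap. Because $\widehat{\cdot\,0\,\cdot}$ is a genuine metric on $\mathbb{S}^d$, the triangle inequality applies: by maximality, for every $z\in S(r)$ the cap $B_{S(r)}(z,e^{-r}/2)$ meets some $B_{S(r)}(z_i,e^{-r}/2)$ (otherwise $z$ could be appended), whence $\widehat{z\,0\,z_i}\le e^{-r}$ and $z\in B_{S(r)}(z_i,e^{-r})$; this proves item (a).

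Next, for the bounded multiplicity (b), I would note that if $z$ belongs to several caps $B_{S(r)}(z_i,e^{-r})$, the corresponding centers lie within angular distance $e^{-r}$ of $z$, so their pairwise disjoint half-caps $B_{S(r)}(z_i,e^{-r}/2)$ all sit inside $B_{S(r)}(z,3e^{-r}/2)$. Combining the upper bound $\sigma(B_{S(r)}(z,3e^{-r}/2))\le C(3/2)^d e^{-dr}$ from \eqref{area_cap} with a matching lower bound $\sigma(B_{S(r)}(z_i,e^{-r}/2))\ge c'(1/2)^d e^{-dr}$, disjointness caps the multiplicity by a purely dimensional constant $K=K(d)$. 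The same scheme handles the overlap count: if $B_{S(r)}(z_i,e^{-r})$ meets $B_{S(r)}(z,Ae^{-r})$ with $A\ge 1$, then $\widehat{z\,0\,z_i}\le (A+1)e^{-r}$, so the disjoint half-caps lie in $B_{S(r)}(z,(A+3/2)e^{-r})$, and the area comparison produces a bound of the form $CA^d$ (additive constants being absorbed into $C$ thanks to $A\ge 1$). Finally, taking $A=\pi e^r$ makes $B_{S(r)}(z,Ae^{-r})$ equal to all of $S(r)$, because any two directions differ by an angle at most $\pi$; specializing the previous bound gives $N(r)\le C\pi^d e^{dr}$.

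The only non-mechanical point, where I expect the bookkeeping to need care, is establishing the matching lower bound $\sigma(B_{S(r)}(z,\theta))\ge c'\theta^d$ uniformly in small $\theta$. This reduces via $\mathrm{SO}(d+1)$-invariance of $\sigma$ to a single base point on $\mathbb{S}^d$, where the angular cap has measure proportional to $\int_0^\theta \sin(\phi)^{d-1}\,d\phi$, equivalent to $\theta^d/d$ as $\theta\to 0^+$ and bounded below by a multiple of $\theta^d$ throughout $\theta\in(0,1]$. Since $e^{-r}\le 1<\pi$ for every $r\ge 0$, the lower bound holds uniformly in $r$, and all remaining steps are elementary applications of the triangle inequality for the angular distance.
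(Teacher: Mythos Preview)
Your argument is correct and is precisely the standard maximal-packing proof of such a covering lemma; all steps (greedy construction, triangle inequality for the angular metric, area comparison via the two-sided bound $c'\theta^d\le\sigma(B_{S(r)}(\cdot,\theta))\le C\theta^d$ for $\theta\in(0,1]$) go through as you describe. Note that the paper itself does not give a proof of this lemma: it is quoted from \cite[Lemma~4.2]{flammant-RST} as a classical tool, so there is no in-paper argument to compare against, but your approach is exactly the expected one.
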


For any integer radius {$r >0$} and any index $1\leq m\leq N(r)$, let us set
\[
B_{r,m} := C(r,r+1) \cap \Cone(z_{r,m},e^{-r})
\]
where $\{z_{r,1},\ldots,z_{r,N(r)}\}$ is the collection of points of the sphere $S(r)$ given by Lemma \ref{Lem:covering} (in this proof we stress the dependence on $r$ of the $z_m$'s by adding the index $r$ in $z_{r,m}$). Each block $B_{r,m}$ is based on the spherical cap $B_{S(r)}(z_{r,m},e^{-r}) = S(r)\cap\Cone(z_{r,m},e^{-r})$ and has thickness $1$. Two blocks $B_{r,m}$ and $B_{r',m'}$ are said to be \textit{adjacent}, which is denoted by $B_{r,m} \sim B_{r',m'}$, if they are at distance  less than $\delta$ from each other. Lemma \ref{Lem:Block} asserts that the volumes of $B_{r,m}$'s are uniformly bounded and the degrees in the graph generated by the adjacency relation are bounded.

\begin{lemma}
\label{Lem:Block}
There exist two positive constants $C_1$ and $C_2$ only depending on $d$ such that for all integer radius {$r>0$} and $m\in \{1,\ldots,N(r)\}$, the following holds :
\begin{equation}
\label{eq:Brm}
\Vol(B_{r,m}) \leq C_1 , \quad \mbox{ and } \quad \# \big\{ (r',m') : \, B_{r',m'} \sim B_{r,m} \big\} \leq C_2 ~.
\end{equation}
\end{lemma}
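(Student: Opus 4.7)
For the volume estimate, the plan is to integrate in polar coordinates. The block $B_{r,m}$ is a radial sector built over the spherical cap of angular aperture $e^{-r}$ around the direction of $z_{r,m}$, with radial thickness $1$. Using (\ref{E:dvolpolar}) and the fact that the angular aperture is fixed along the radial integration, we have
\[
\Vol(B_{r,m}) = \int_r^{r+1} \sinh(\rho)^d \, \sigma\bigl(B_{S(\rho)}(z_{r,m}, e^{-r})\bigr) \, d\rho.
\]
Applying the estimate (\ref{area_cap}) to bound $\sigma(B_{S(\rho)}(z_{r,m}, e^{-r})) \leq C e^{-rd}$ and bounding $\sinh(\rho)^d \leq e^{d(r+1)}$ for $\rho \in [r, r+1]$, the product collapses to a constant depending only on $d$, giving the first inequality in \eqref{eq:Brm}.

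For the degree bound, first I would control the admissible values of $r'$. Since any $z \in B_{r,m}$ satisfies $d(0,z) \in [r, r+1]$, the triangle inequality applied to the origin forces any $z' \in B_{r',m'}$ with $d(z,z') < \delta$ to satisfy $|d(0,z) - d(0,z')| < \delta$, hence $|r - r'| \leq 1 + \delta$. Because $r, r'$ are integers, this leaves at most $2\lfloor 1 + \delta\rfloor + 1$ admissible values of $r'$, a constant depending only on $\delta$ (hence on the problem's fixed data).

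Next, for each such $r'$, I would control the angular separation of the apices $z_{r,m}$ and $z_{r',m'}$ of two adjacent blocks. Using the hyperbolic law of cosines, two points at radii $\rho, \rho'$ bounded by $r+1$ and angular separation $\theta$ satisfy $\cosh(d) = \cosh(\rho)\cosh(\rho') - \sinh(\rho)\sinh(\rho')\cos\theta$, which for small $\theta$ yields $d \gtrsim \sinh(\min(\rho,\rho'))\,\theta \gtrsim e^{r}\theta/C$. Taking also into account the angular apertures $e^{-r}$ and $e^{-r'}$ of the blocks themselves, the existence of points at distance $<\delta$ forces $\widehat{z_{r,m} 0 z_{r',m'}} \leq A e^{-r'}$ for some constant $A = A(\delta, d) \geq 1$. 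Then the angular position of $z_{r,m}$, viewed from the sphere $S(r')$, lies in a spherical cap of angular radius $A e^{-r'}$, and the final inequality of the Covering Lemma bounds the number of covering caps $B_{S(r')}(z_{r',m'}, e^{-r'})$ that can meet it by $C A^d$, a constant. Summing over the bounded number of admissible $r'$ yields the second inequality in \eqref{eq:Brm}.

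The main delicate step will be the angular estimate in the third paragraph: one must be careful that the two blocks may sit on different spheres (radii differing by up to $1+\delta$), so the hyperbolic distance must be expanded using the law of cosines at differing radii rather than on a single sphere, and the apertures $e^{-r}$ and $e^{-r'}$ must be absorbed into the same exponential scale $e^{-r'}$, at the cost of a multiplicative constant. Once this reduction is in place, invoking the Covering Lemma is routine.
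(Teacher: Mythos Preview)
Your proposal is correct and follows essentially the same approach as the paper: polar integration with (\ref{area_cap}) for the volume bound, then restricting $r'$ to a bounded set of integers and invoking the Covering Lemma with an enlarged aperture for the degree bound. The paper is terser on the angular step---it simply asserts that an adjacent block must overlap $\Cone(z_{r,m},(1+\delta)e^{-r})$---whereas you spell out why via the hyperbolic law of cosines; this extra care is sound and arguably clarifies a point the paper leaves implicit.
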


\begin{proof}
First, using \eqref{area_cap},
\begin{eqnarray*}
\Vol(B_{r,m}) & = & \int_r^{r+1} \sinh^d(\rho)  \times \sigma \big(\Cone(z_{\rho,m},e^{-\rho}) \cap \mathbb{S}^d \big) \, d\rho \\
& \leq & {C}\int_r^{r+1} e^{d\rho} \, d\rho \times e^{-dr} = \frac{{C}}{d} (e^d - 1) =: C_1 ~.
\end{eqnarray*}
For the second inequality of \eqref{eq:Brm}, two blocks $B_{r',m'}$ and $B_{r,m}$ are adjacent whenever $B_{r',m'}$ overlaps $\{z \in \H^{d+1} : d(z,B_{r,m}) \leq \delta\}$. Since $\delta<1$ this forces $r'$ to be in $\{r-1,r,r+1\}$ and $B_{r',m'}$ to overlap $\Cone(z_{r,m},(1+\delta)e^{-r})$. 
Now, Lemma \ref{Lem:covering} asserts that for each layer $r' \in \{r-1,r,r+1\}$, there are at most $C A^d= C (1+\delta)^d e^{d(r'-r)} \leq C 2^d e^d$ blocks $B_{r',m'}$ overlapping $\Cone(z_{r,m},(1+\delta)e^{-r})$. Consequently, $B_{r,m}$ has at most $C_2 := 3 C 2^d e^d$ adjacent blocks.
\end{proof}

The block $B_{r,m}$ is said \textit{$\delta$-bad} if it contains a Poisson point $z$ such that $d(z,A(z))<\delta$. Let us first prove that it is possible to choose $\delta>0$ small enough so that the set of $\delta$-bad blocks denoted as $\Psi_\delta$ is a.s. subcritical w.r.t. the adjacency relation, i.e. $\Psi_\delta$ only admits finite connected components. To do it, we first use the Mecke's formula \cite[Prop. 13.1.IV]{daleyverejones} to bound the probability for a block to be $\delta$-bad: for any $(r,m)$,
\begin{eqnarray}
\label{p(delta)}
\P \big(B_{r,m} \mbox{ is $\delta$-bad}\big) & \leq & \E \left[ \# \{z \in B_{r,m} \cap \mathcal{N} : \, d(z,A(z)) < \delta\} \right] \nonumber \\
& = & \E \left[ \# \{ z \in B_{r,m} \cap \mathcal{N} : \, B^+(z,\delta) \cap \mathcal{N} \not= \emptyset \right] \nonumber \\
& = & \lambda \int_{B_{r,m}} \P \big( B^+(z,\delta) \cap \mathcal{N} \not= \emptyset \big) \, dz \nonumber \\
& \leq & \lambda \Vol(B_{r,m}) \big( 1-e^{-\lambda \Vol(B(\delta))} \big) \nonumber \\
& \leq & \lambda C_1 \big( 1-e^{-\lambda \Vol(B(\delta))} \big) =: p(\delta)
\end{eqnarray}
thanks to Lemma \ref{Lem:Block}. Hence, $\P(B_{r,m}\mbox{ is $\delta$-bad})$ tends to $0$ as $\delta\to 0$ uniformly on the couple $(r,m)$.

In a second step, we adapt the Peierls argument to our context to establish the subcriticality of $\Psi_\delta$ for $\delta$ small enough. Given $(r,m)$, let $\mathcal{P}_{r,m}(k)$ be the set of paths with length $k$ of adjacent blocks starting at $B_{r,m}$. By Lemma \ref{Lem:Block}, $\# \mathcal{P}_{r,m}(k) \leq (C_2)^{k}$. Such a path $\pi=(B_1,\ldots,B_k)$ is said $\delta$-\textit{bad} if all the blocks $B_i$ it contains are $\delta$-bad. Henceforth the probability for $B_{r,m}$ to belong to an infinite connected component of $\delta$-bad blocks is upperbounded by
\[
\limsup_{k\to\infty} \sum_{\pi \in \mathcal{P}_{r,m}(k)} \P \big( \pi \mbox{ is $\delta$-bad} \big) ~.
\]
For any path $\pi=(B_1,\ldots,B_k)$ in $\mathcal{P}_{r,m}(k)$, we can choose a subset of blocks $\{B_{i_1},\ldots,B_{i_\ell}\}$ included in $\{B_1,\ldots,B_k\}$ such that the $B_{i_j}$'s are two by two non adjacent and $\ell\geq k/C_2$ (here we use that each block has at most $C_2$ adjacent blocks). Besides, the adjacency relation has been defined so that the events $\{B_{i_j} \mbox{ is $\delta$-bad}\}$'s are mutually independent. So
\[
\P \big( \pi \mbox{ is $\delta$-bad} \big) \leq \P \big( B_{i_1},\ldots,B_{i_\ell} \mbox{ are $\delta$-bad} \big) \leq p(\delta)^{k/C_2} ~,
\]
where $p(\delta)$ is defined in (\ref{p(delta)}). It follows
\[
\sum_{\pi \in \mathcal{P}_{r,m}(k)} \P \big( \pi \mbox{ is $\delta$-bad} \big) \leq \big( C_2 p(\delta)^{1/C_2} \big)^k ~. 
\]
Choosing $\delta$ small enough so that $C_2 p(\delta)^{1/C_2}<1$, we then obtain that a.s. the block $B_{r,m}$ cannot belong to an unbounded connected component of $\delta$-bad blocks. Consequently, for such parameter $\delta$, the set $\Psi_\delta$ is subcritical with probability $1$.\\

To conclude the proof of Lemma \ref{Lem:H-deltaUnbounded}, let us pick $\delta$ small enough such that the set of $\delta$-bad blocks does not percolate. Assume also that $\mathcal{D}^\infty(z_0)$ is non empty, i.e. the subtree of the RST rooted at $z_0$ admits (at least) one infinite path $(z_n)_{n\geq 0}$ of Poisson points. The PPP $\mathcal{N}$ being locally finite, the path $(z_n)_{n \geq 0}$ cannot be stuck, from some index, inside a $\delta$-bad connected component of blocks which is bounded by choice of $\delta$. It eventually comes out of each $\delta$-bad connected component. Two cases must be distinguished.\\
Either $(z_n)_{n\geq 0}$ visits infinitely many $\delta$-good blocks where of course a block is said \textit{$\delta$-good} if it is not $\delta$-bad. Hence, infinitely many of the $z_n$'s satisfy $d(z_n,A(z_n))\geq \delta$. These vertices are in $\mathcal{G}(\delta)$ which means that $\mathcal{H}(\delta)$ is unbounded.\\
Or, the infinite path $(z_n)_{n\geq 0}$ jumps infinitely many times from a $\delta$-bad connected component to another one. But, by construction, two different bad connected components are at distance at least $\delta$ from each other. So these jumps provide as many $z_n$'s in $\mathcal{G}(\delta)$: $\mathcal{H}(\delta)$ is unbounded in this case too.

\subsection{Proof of Lemma \ref{Lem:PositiveVolume}}
\label{Sect:PositiveVolume}

In what follows, we will modify configurations locally. This is why we emphasize the dependence of any random set $\mathcal{A}$ (think about $\mathcal{D}(z)$, $\mathcal{D}^\infty(z)$ or $\mathcal{H}(\delta)$) on the current configuration $\eta$, a realization of the PPP $\NN$, by writting $\mathcal{A}[\eta]$.

Given $r_0>0$ and $u_0 \in \mathbb{S}^d$, recall that $z_0$ is the closest Poisson point to $(r_0 ; u_0)$. For this section, let us consider a configuration $\eta$ satisfying $\mathcal{D}^\infty(z_0)\not= \emptyset$. Let $\delta>0$ and $r>r_0$ be an element of the set $\mathcal{H}(\delta)[\eta]$-- by Lemma \ref{Lem:H-deltaUnbounded}, this latter set is unbounded provided $\delta$ is small enough. Let us set $z_1 := (r;u) \in \mathcal{G}(\delta)$ for some $u\in \mathbb{S}^d$ ($z_1$ is in $\NN$). In the sequel, we will work conditionally on $\NN_{B(r)} = \eta_{B(r)}$, the configuration of the PPP $\NN$ restricted to the ball $B(r)$.

Let $0<\delta'<\delta$ and $h>0$ (thought as large). Let also $z_2 := (r+h;u)$ (with the same direction as $z_1$). The event $\Stab(r,h,\delta')$ encodes the fact that the set of descendants of any Poisson point in $B(z_2,\delta')$ is not sensitive to what happens inside $B(r)$:
\begin{equation}\label{def:Stab}
\Stab(r,h,\delta') := \big\{ \eta' : \, \forall z \in \NN \cap B(z_2,\delta') , \, \forall \eta'' , \, \mathcal{D}(z)[\eta''_{B(r)} \cup \eta'_{B(r)^\complement}] = \mathcal{D}(z)[\eta'] \big\} ~.
\end{equation}
It will be proved in Lemma \ref{Lem:LimitDeter} that $\Stab(r,h,\delta')$ has a probability tending to $1$ as $h\to\infty$ uniformly on $r$.

Let us now introduce the event $F(r,h,\delta')$ on which there exists $z \in \NN \cap B(z_2,\delta')$ such that the four following items hold:
\begin{itemize}
\item[$(i)$] $\NN \cap B(z_2,\delta') = \{z\}$,
\item[$(ii)$] $\sigma(\mathcal{D}^\infty(z)) > 0$,
\item[$(iii)$] $\Stab(r,h,\delta')$ occurs
\item[$(iv)$] and $\mathcal{D}(z) \subset B(r+h+\delta')^\complement$.
\end{itemize}
On the event $F(r,h,\delta')$, there is a unique point of the PPP $\NN$ in the ball $B(z_2,\delta')$ that has a thick trace at infinity. This subtree is outside the ball $B(r+h+\delta')$ and does not depend on the points of $\NN$ in the ball $B(r)$. The distance $h$ is the separation gap ensuring that the subtree rooted at $z_2=(r+h;u)$ remains independent from the Poisson points inside the ball $B(r)$.

Lemma \ref{Lem:unifF0} states that the event $F(r,h,\delta')$ has a probability larger than some positive $\eps$ which is uniform on $r$. Getting such uniformity of $\eps$ (or $A$) on $r$ will significantly complicate the proof of Lemma \ref{Lem:unifF0}, postponed to Section \ref{Section:UnifF0}.

\begin{lemma}
\label{Lem:unifF0}
There exists $A>0$ large enough such that, for any $\delta'>0$ small enough, there exists $\eps=\eps(A,d,\lambda,\delta')>0$ such that for any $h_0>0$ large enough and $r>0$, there exists $h \in [h_0,h_0+A]$ such that $\P( F(r,h,\delta') ) \geq \eps$. Note also that among the previous parameters, only $h$ may depend on $r$.
\end{lemma}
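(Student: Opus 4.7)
The strategy is to produce an explicit favourable Poisson configuration in a bounded neighbourhood of $z_2 = (r+h;u)$ whose probability is uniform in $r$ thanks to hyperbolic homogeneity, and to combine it with the stabilization estimate (Lemma~\ref{Lem:LimitDeter}) to take care of the dependence on $\NN_{B(r)}$.

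First I would define a local template inside a ball $B(z_2, D)$ of fixed hyperbolic radius $D = D(A,\delta')$. The template prescribes three PPP events on disjoint measurable sets: (a) exactly one Poisson point $z$ in $B(z_2,\delta')$; (b) an empty annular "corridor" between $B(z_2,\delta')$ and a nearby sphere close to $S(r+h)$, calibrated so that the arc $|\![z,A(z)]\!|$ cannot re-enter $B(r+h+\delta')$ after leaving $B(z_2,\delta')$; and (c) a fan of Poisson points in an outer shell $W \subset C(r+h+\delta', r+h+D)$, placed with pairwise intrinsic hyperbolic separation at least some $\epsilon_0 > 0$, seeding angularly separated descendants of $z$. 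Since $B(z_2,D)$ is hyperbolically isometric to the ball of radius $D$ around any other point at depth $r+h$, and all prescribed sets have hyperbolic volumes bounded uniformly in $r$, independence of the PPP on disjoint sets gives a probability bounded below by some $p_0 = p_0(A,d,\lambda,\delta') > 0$.

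On this template, Items $(i)$ and $(iv)$ of the definition of $F(r,h,\delta')$ are built in by construction. For Item $(ii)$, I would combine Property~\ref{Prop:straightness} with the existence of infinite paths in every direction (Corollary~\ref{Cor:positivevolume}$(o)$, already available from \cite{flammant-RST}) and the non-crossing property in the forward cone above $z$: the asymptotic directions of the fan branches, provided they support infinite subtrees, lie within angular distance $e^{-(1-\varepsilon)(r+h+D)}$ of their initial directions, which is much finer than the intrinsic angular separation $\epsilon_0 e^{-(r+h+D)}$ between consecutive seeds, so the resulting individual traces lie in pairwise disjoint spherical caps and the total $\sigma$-measure is strictly positive. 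Item $(iii)$ is controlled by Lemma~\ref{Lem:LimitDeter}, which makes $\P(\Stab(r,h,\delta')^c)$ smaller than any prescribed constant (uniformly in $r$) once $h_0$ is large, so intersecting the template event with $\Stab$ only loses a negligible fraction of the probability $p_0$.

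The flexibility $h \in [h_0, h_0+A]$ is used to absorb exceptional values of $h$ for which the prescribed empty corridor would be incompatible with the (frozen) configuration $\NN_{B(r)}$; a Fubini-pigeonhole argument over an interval of length $A$ produces an $h^\ast \in [h_0, h_0+A]$ realizing the announced lower bound $\eps$. The main obstacle is Item $(ii)$: establishing, without invoking Theorem~\ref{Thm:positivevolume} itself, that the planted fan genuinely produces a trace of strictly positive $\sigma$-measure. This requires a quantitative combination of the straightness estimate with the fluctuation bound of Lemma~\ref{lem:MBD}, so that distinct seeds are guaranteed to generate distinct asymptotic directions. It is also the step that exploits the hyperbolic geometry in an essential way: the exponential contraction of angular fluctuations with depth is precisely what makes the planted angular separation robust under the random evolution, an effect that has no Euclidean analogue.
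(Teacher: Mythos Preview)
Your plan has a genuine gap in establishing Item~$(ii)$, and it is structural rather than technical. A local template inside a bounded ball $B(z_2,D)$ cannot force $\sigma(\mathcal{D}^\infty(z))>0$, because this quantity depends on the entire PPP outside $B(z_2,D)$. Concretely: your fan consists of finitely many seed points $w_1,\dots,w_k$. Even granting that each $w_i$ supports an infinite subtree, the union of finitely many asymptotic directions has $\sigma$-measure zero; to get positive measure you would need some $w_i$ to satisfy $\sigma(\mathcal{D}^\infty(w_i))>0$, which is exactly the property you are trying to establish one level down, so the argument is circular. Your proposed escape---that distinct seeds have traces in disjoint caps---does not help, and in fact the angular comparison you state is reversed: at depth $R=r+h+D$ the straightness aperture is $e^{-(1-\varepsilon)R}$, which is \emph{larger} than the angular separation $\epsilon_0 e^{-R}$ coming from a fixed hyperbolic spacing $\epsilon_0$, not finer. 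So the cones overlap rather than separate.

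The paper avoids this circularity by abandoning any constructive template and arguing by density. It first shows (Lemma~\ref{Lem:PositiveDensity}) that $\E[\#\{z\in\NN\cap B(z_2,\delta'):\sigma(\mathcal{D}^\infty(z))>0\}]\ge c_0\Vol(B(\delta'))$ for some $h\in[h_0,h_0+A]$, via a Cauchy--Schwarz argument on $\mathcal{L}_r$: since every direction in $\mathbb{S}^d$ is reached, $\sum_{z\in\mathcal{L}_r}\sigma(\mathcal{D}^\infty(z))\ge 1$ deterministically, while Lemma~\ref{lem:MBD} bounds $\E[\sum_z\sigma(\mathcal{D}^\infty(z))^2]\le Ce^{-dr}$, forcing $\E[\#\{z:\sigma(\mathcal{D}^\infty(z))>0\}]\ge ce^{dr}$. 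This is then transferred from $\mathcal{L}_r$ to Poisson points in balls by Fubini and pigeonhole over the annulus $C(r,r+A)$---this, not any compatibility with $\NN_{B(r)}$, is what the flexibility $h\in[h_0,h_0+A]$ is for. Items~$(i)$, $(iii)$, $(iv)$ are then handled as you suggest (small $\delta'$ for $(i)$ and $(iv)$, Lemma~\ref{Lem:LimitDeter} for $(iii)$), each costing at most $\tfrac{c_0}{4}\Vol(B(\delta'))$, and a union bound yields $\P(F(r,h,\delta'))\ge \tfrac{c_0}{4}\Vol(B(\delta'))$. The point is that $(ii)$ is obtained \emph{in expectation} from a global counting identity, never from a local construction.
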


For $\theta>0$, let us consider the subset $U = U(r,h,\delta',\theta)$ of $\mathbb{H}^{d+1}$ defined as
\begin{equation}
\label{def:U}
U := \Big( B(r+h+\delta') \setminus (B(r) \cup B(z_2,\delta')) \Big) \cap \Cone(0,\theta) ~.
\end{equation}
See Figure \ref{fig:ConstructionU}. Let $\eta' \in F(r,h,\delta')$ such that $\eta'_{B(r)}=\eta_{B(r)}$. When all the Poisson points of the configuration $\eta'$ inside the set $U$ are removed, the ancestor of $z$-- the only Poisson point in $B(z_2,\delta')$ according to the event $F(r,h,\delta')$ --becomes $z_1$ which itself is a descendant of $z_0$. This leads to $\sigma(\mathcal{D}^{\infty}(z_0))[\eta'_{U^\complement}] > 0$. This construction requires the hypothesis $r\in\mathcal{H}(\delta)[\eta]$, i.e. there is no Poisson points in $B^+(z_1,\delta)$, and this is the only place in the proof of Lemma \ref{Lem:PositiveVolume} where it is needed. 

\begin{lemma}
\label{Lem:emptying}
For any $r\in\mathcal{H}(\delta)$ with $r\geq r_0$, for any $h$ and $\delta'<\delta/3$, there exists $\theta$ large enough such that the following statement holds. Almost every configuration $\eta'$ with $\eta'_{B(r)}=\eta_{B(r)}$ and $\eta'\in F(r,h,\delta')$ satisfies $\sigma(\mathcal{D}^{\infty}(z_0))[\eta'_{U^\complement}] > 0$.
\end{lemma}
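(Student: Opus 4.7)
The plan is to prove that in the modified configuration $\eta'_{U^\complement}$ the unique Poisson point $z\in \eta'\cap B(z_2,\delta')$ (supplied by condition~$(i)$ of $F(r,h,\delta')$) picks $z_1$ as its new ancestor and simultaneously retains all of its previous infinite descendants. Because the ancestor chain $z_1\to A(z_1)\to\cdots\to z_0$ lies entirely in $B(r)$, a region on which $\eta'_{U^\complement}$ coincides with $\eta$, this places $z$ inside $\mathcal{D}(z_0)[\eta'_{U^\complement}]$; combined with the preservation of the infinite subtree rooted at $z$, condition~$(ii)$ of $F$ then yields $\sigma(\mathcal{D}^\infty(z_0))[\eta'_{U^\complement}]\geq \sigma(\mathcal{D}^\infty(z))[\eta']>0$.

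The main step is the identification $A(z)[\eta'_{U^\complement}]=z_1$. Using $\eta'_{B(r)}=\eta_{B(r)}$, the definition of $U$, and $\eta'\cap B(z_2,\delta')=\{z\}$, the candidates for the ancestor of $z$ in $\eta'_{U^\complement}$ split into: (a) the point $z_1$; (b) points of $\eta\cap B(r)\cup\{0\}$ other than $z_1$; (c) points of $\eta'$ in the annulus $B(r_z)\setminus B(r)$ lying outside $\Cone(z_2,\theta)$. Group (c) is neutralised by taking $\theta$ close enough to $\pi$: the hyperbolic law of cosines then forces any such point to satisfy $d(z,z')\gtrsim r_z+r'\gg h+\delta'\geq d(z,z_1)$, and this is the only role of the hypothesis ``$\theta$ large enough''. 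Group (b) is the hard case. The key geometric fact is that $z_1$ is extremely close to the foot of perpendicular $F=(r,u_z)$ from $z$ onto $S(r)$: applying the law of cosines to $d(z,z_2)\leq\delta'$ bounds the angular distance $\alpha$ between $u$ and $u_z$ by $\delta'/\sinh(r+h)$, whence $d(z_1,F)\leq \delta' e^{-h}\leq \delta'$. On the other hand, $r\in \mathcal{H}(\delta)$ gives $d(z',z_1)\geq \delta$ for every $z'\in \eta\cap B(r)\setminus\{z_1\}$, so $d(z',F)\geq \delta-\delta'>2\delta/3$ using $\delta'<\delta/3$. A second application of the law of cosines then yields $\cosh d(z,z')-\cosh d(z,F)\gtrsim \delta^2 e^h$, driven by the exponential divergence of hyperbolic geodesics, while the corresponding excess for $z_1$ is bounded by $(\delta')^2 e^{-h}/2$. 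The gap $e^{2h}$ between these two rates is precisely what makes the single hypothesis $\delta'<\delta/3$ suffice, \emph{uniformly in $h>0$}. The origin itself is ruled out by a direct application of the law of cosines: $\cosh d(z,0)-\cosh d(z,z_1) = -\cosh r_z(\cosh r-1)+\sinh r_z\sinh r\cos \alpha>0$ for $r>0$ and small $\alpha$, since it reduces to $\tanh r_z\cos\alpha>\tanh(r/2)$, which holds as $\alpha$ is exponentially small in $r+h$ whereas $\tanh r_z>\tanh(r/2)$ is strict.

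The remaining step is the transfer of the infinite descendants. Condition~$(iv)$ of $F$ places every descendant of $z$ besides $z$ itself outside $B(r+h+\delta')\supseteq U$, so the vertices of every infinite path $(q_k)_{k\geq 0}\subseteq \mathcal{D}(z)[\eta']$ survive in $\eta'_{U^\complement}$. Their ancestor links are preserved automatically: erasing points from a configuration can only enlarge distances to the nearest neighbour, and $q_{k-1}$ --- already the closest point to $q_k$ in $(\eta'\cup\{0\})\cap B(d(0,q_k))$ --- is still present in $\eta'_{U^\complement}$, hence is still the closest in the sparser candidate set. Therefore $\mathcal{D}^\infty(z)[\eta']\subseteq \mathcal{D}^\infty(z)[\eta'_{U^\complement}]\subseteq \mathcal{D}^\infty(z_0)[\eta'_{U^\complement}]$, and the lemma follows. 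The main obstacle is the quantitative hyperbolic estimate controlling Group (b): it rests entirely on the exponential factor $e^h$ specific to $\mathbb{H}^{d+1}$ --- the very feature that, as emphasised in the introduction, makes the thick-trace property a genuinely hyperbolic phenomenon.
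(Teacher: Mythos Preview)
Your plan is correct and follows the same two-step strategy as the paper: establish $A(z)[\eta'_{U^\complement}]=z_1$, then transfer the thick trace via $\mathcal{D}(z)[\eta']\subset\mathcal{D}(z)[\eta'_{U^\complement}]\subset\mathcal{D}(z_0)[\eta'_{U^\complement}]$. The descendant-preservation argument (erasing points cannot change the ancestor of a surviving vertex whose old ancestor also survives) and the use of the unchanged ancestor chain $z_1\to\cdots\to z_0$ inside $B(r)$ match the paper exactly.

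The difference lies in the geometric core. The paper sets $R=d(z,z_1)$ and proves the single containment $B^+(z,R)\cap B(r)\subset B^+(z_1,\delta)$ synthetically: it introduces the radial projections $v_1,v_2,v_3$ of the geodesic $(0z)$ onto $S(r+h),S(r),S(z,R)$ together with the reflection $w$ of $z_1$ across $(0z)$, and checks that $d(v_3,z_1)$ and $d(w,z_1)$ are at most $3\delta'$ --- this is exactly where the threshold $\delta'<\delta/3$ appears. Your group~(c) is then dispatched for free by choosing $\theta$ just large enough that $\Cone(u,\theta)\supset B^+(z,R)$, with no computation. You instead argue pointwise for each competitor via the law of cosines, comparing $\cosh d(z,z')-\cosh d(z,F)$ against $\cosh d(z,z_1)-\cosh d(z,F)$. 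This is valid, but the step ``$\cosh d(z,z')-\cosh d(z,F)\gtrsim\delta^2 e^h$'' deserves one more line: it rests on the fact that the angle $\widehat{zFz'}\geq\pi/2$ (since $F$ is the nearest point of $\overline{B(r)}$ to $z$ and $z'\in\overline{B(r)}$, the geodesic $[F,z']$ enters the convex set $\overline{B(r)}$), which gives $\cosh d(z,z')\geq\cosh d(z,F)\cdot\cosh d(F,z')$. Note also that the ``$e^{2h}$ gap'' you highlight is not really what makes the argument work: even at $h=0$ the comparison reduces to an inequality of the form $(\delta')^2<c\,\delta^2$, which already follows from $\delta'<\delta/3$; the exponential factors only widen an inequality that is already strict. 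The paper's route is shorter and makes the constant $\delta/3$ transparent; yours makes the hyperbolic divergence quantitatively explicit but is heavier and requires handling the origin and group~(c) as separate cases.
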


\begin{figure}[!ht]
\begin{center}
\psfrag{0}{\small{$0$}}
\psfrag{a}{\small{$z_0$}}
\psfrag{b}{\small{$z_1$}}
\psfrag{c}{\small{$z$}}
\psfrag{d}{\small{$B(z_2,\delta')$}}
\psfrag{e}{\small{$r$}}
\psfrag{f}{\small{$r+h+\delta'$}}
\psfrag{g}{\small{$\infty$}}
\psfrag{h}{\small{$r_0$}}
\includegraphics[width=13cm,height=6.5cm]{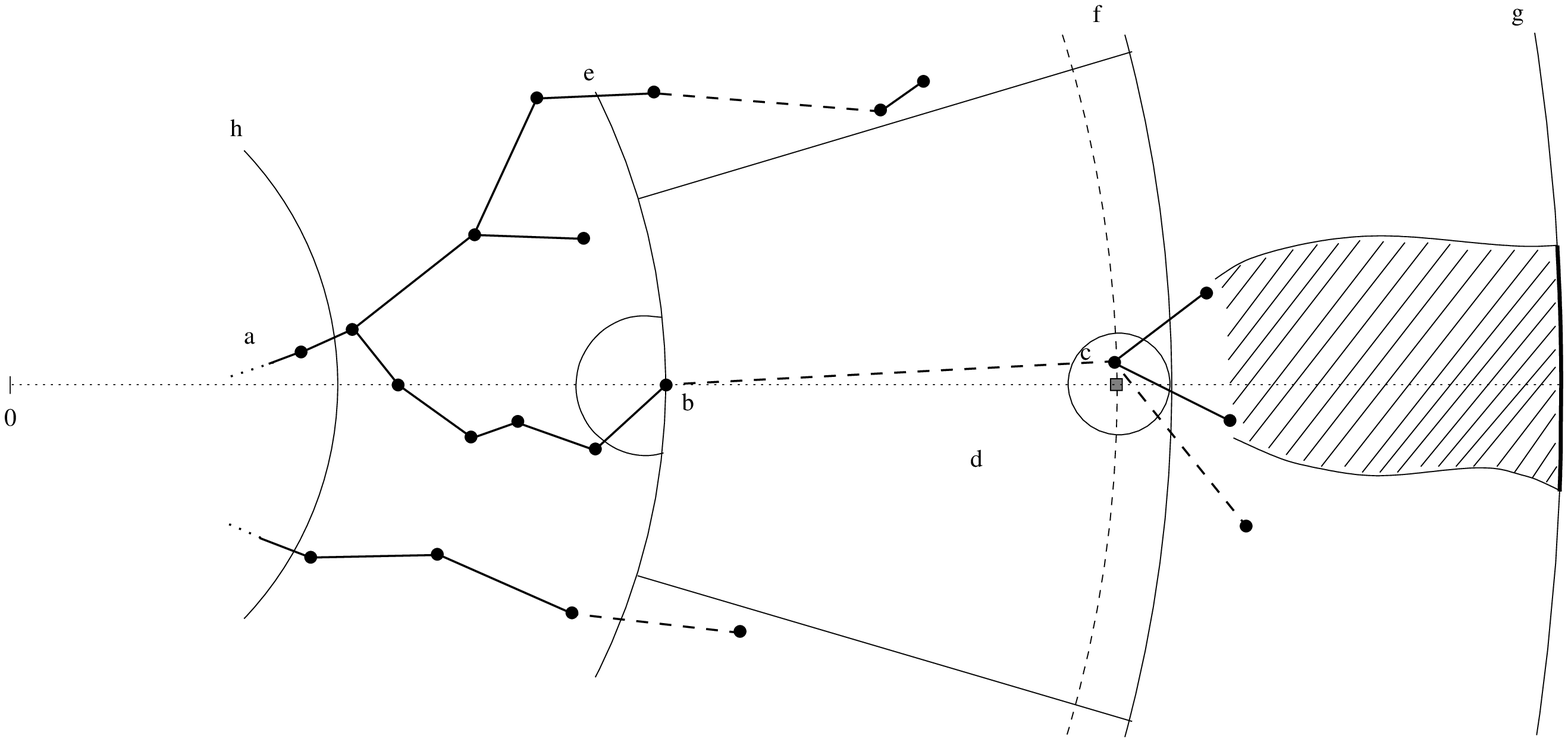}
\caption{\label{fig:ConstructionU}Illustration of Lemma \ref{Lem:emptying}. Black dots are Poisson points. This picture represents a configuration $\eta'_{U^\complement}$ (i.e. Poisson points of $\eta'_U$ have been removed) where $\eta'_{B(r)}=\eta_{B(r)}$ and $\eta'\in F(r,h,\delta')$. The ball $B(z_2,\delta')$ whose (deterministic) center $z_2$ is marked by a grey square, contains only one Poisson point, namely $z$. Its set of descendants $\mathcal{D}(z)$ is represented by the hatched region and satisfies $\sigma(\mathcal{D}^{\infty}(z))[\eta'] > 0$ (the bold black curve on the unit sphere $\mathbb{S}^d$). When the set $U$ is emptying of Poisson points then $A(z)[\eta'_{U^\complement}]=z_1$ which means that $\sigma(\mathcal{D}^{\infty}(z_0))[\eta'_{U^\complement}] > 0$. Edges in dotted lines have been modified after deleting $\eta'_{U}$.}
\end{center} 
\end{figure}

We are now ready to prove Lemma \ref{Lem:PositiveVolume}.

\begin{proof}[Proof of Lemma \ref{Lem:PositiveVolume}]
Parameters $A$, $\delta'$, $\eps$ and $h_0$ are chosen according to Lemma \ref{Lem:unifF0}. Then for any $r>0$ there exists $h\in [h_0,h_0+A]$ such that $\P(F(r,h,\delta')) \geq \eps$. Because it requires $\Stab(r,h,\delta')$, the event $F(r,h,\delta')$ does not depend on the configuration inside the ball $B(r)$. So,
\[
\P \big( F(r,h,\delta') \,|\, \NN_{B(r)}=\eta_{B(r)} \big) = \P \big( F(r,h,\delta') \big) \geq \eps ~.
\]
Now $r$ can be chosen in $\mathcal{H}(\delta)$ with $r>r_0$ and $0<\delta'<\delta/3$ so that Lemma \ref{Lem:emptying} applies:
\begin{eqnarray*}
\eps & \leq & \P \big( \eta' \in F(r,h,\delta') \,|\, \NN_{B(r)}=\eta_{B(r)} \big) \\
& \leq & \P \big( \sigma(\mathcal{D}^{\infty}(z_0))[\eta'_{U^\complement}] > 0 \,|\, \NN_{B(r)}=\eta_{B(r)} \big) \\
& = & \P \big( \sigma(\mathcal{D}^{\infty}(z_0))[\eta'] > 0 , \, \eta'_U = \emptyset \,|\, \NN_{B(r)}=\eta_{B(r)} \big) \\
& \leq & \P \big( \sigma(\mathcal{D}^{\infty}(z_0)) > 0 \,|\, \NN_{B(r)}=\eta_{B(r)} \big) ~.
\end{eqnarray*}
Note that the lower bound $\eps=\eps(A,d,\lambda,\delta')$ does not depend on the parameter $r$.
\end{proof}

This section ends with the proof of Lemma \ref{Lem:emptying}.

\begin{proof}[Proof of Lemma \ref{Lem:emptying}]
Consider a configuration $\eta'$ equal to $\eta$ inside the ball $B(r)$ and belonging to the event $F(r,h,\delta')$. Let us first assume that, for the configuration $\eta'_{U^\complement}$, the ancestor of the Poisson point $z$ (whose existence is given by $F(r,h,\delta')$) is $z_1$, i.e.
\begin{equation}
\label{AncestorZ}
A(z)[\eta'_{U^\complement}] = z_1 ~.
\end{equation}
Let us prove $\Vol(\mathcal{D}^{\infty}(z_0))[\eta'_{U^\complement}] > 0$ from (\ref{AncestorZ}). Since $\eta'\in F(r,h,\delta')$, the set of descendants $\mathcal{D}(z)[\eta']$ is included in $B(r+h+\delta')^\complement$. So, removing Poisson points of $\eta'_U$ modifies no edges $(z',A(z'))$ of the RST as long as $A(z')\in\mathcal{D}(z)$. In fact, removing $\eta'_U$ may only add new descendants to the vertex $z$. Hence, $\mathcal{D}(z)[\eta']$ is included in $\mathcal{D}(z)[\eta'_{U^\complement}]$ which leads to $\sigma(\mathcal{D}^{\infty}(z))[\eta'_{U^\complement}] \geq \sigma(\mathcal{D}^{\infty}(z))[\eta'] >0$. Finally, using $A(z)[\eta'_{U^\complement}]=z_1$ which is itself a descendant of $z_0$, we get $\sigma(\mathcal{D}^{\infty}(z_0))[\eta'_{U^\complement}] > 0$.

It then remains to show (\ref{AncestorZ}). Let $R$ be the hyperbolic distance between $z$ and $z_1$. It is sufficient to prove that $B^+(z,R)$ is included in $U\cup B^+(z_1,\delta)$ since, after removing Poisson points in $U$, $z_1$ would become the closest Poisson point to $z$. Here we use that $r\in\mathcal{H}(\delta)$, i.e. the ball $B^+(z_1,\delta)$ contains no other Poisson points except $z_1$. Let us first remark that
\[
R = d(z,z_1) \leq d(z,z_2) + d(z_2,z_1) \leq \delta' + h \leq h + 1
\]
with $\delta'<1$. So, taking $\theta$ large enough such that $B^+(z,R)\subset \Cone(0,\theta)$, it then remains to prove that 
\begin{equation}\label{etape1}
V:=B^+(z,R)\cap B(r) \quad \subset \quad B^+(z_1,\delta). 
\end{equation}
To do so, let us pick $v_1, v_2, v_3$ on the geodesic $\gamma$ between $0$ and $z$ as follows: $v_1$, $v_2$ and $v_3$ are the intersection points between $\gamma$ and resp. the sphere $S(0,r+h)$, $S(0,r)$ and $S(z,R)$. Let us denote by $w$ the symmetric of $z_1$ w.r.t. the line $(0z)$. Henceforth, it is sufficient to show that $w$, $z_1$ and $v_3$ belong to $B^+(z_1,\delta)$. First, we have $d(v_2,z_1)\leq d(v_1,z_2)\leq \delta'$. Thus, by symmetry, $d(w,z_1)\leq d(w,v_2)+d(v_2,z_1)=2d(v_2,z_1)\leq 2\delta'$. Since $v_3$, $v_2$ and $z$ are on the same geodesic, we can write:
\[
d(v_3,v_2) = d(v_3,z) - d(z,v_2) \leq R - h + \delta' ~.
\]
Since $R\leq h+\delta'$, we get $d(v_3,v_2)\leq 2\delta'$ and then $d(v_3,z_1)\leq d(v_3,v_2)+d(v_2,z_1)\leq 3\delta'$. Whenever $\delta' <\delta/3$, the three points $w$, $z_1$ and $v_3$ are inside $B^+(z_1,\delta)$. This proves \eqref{etape1} and concludes the proof.
\end{proof}

\section{Proof of Lemma \ref{Lem:unifF0}: uniformity in $r$}\label{Section:UnifF0}

This section is devoted to the proof of Lemma \ref{Lem:unifF0}, i.e. $\P( F(r,h,\delta') ) \geq \eps$ where the lower bound $\eps$ does not depend on $r,h$. Recall the notation of Section \ref{Sect:PositiveVolume} and of Figure \ref{fig:ConstructionU}. We first prove in Section \ref{sect:PositiveDensity>0} that a positive proportion of Poisson points of $B(z_2,\delta')$ satisfies $\sigma(\mathcal{D}^\infty(\cdot)) > 0$-- Item $(ii)$ in the definition of $F(r,h,\delta')$ --where $z_2=(r+h;u)$. Then we prove that the properties described by the other three items occur with high probability. In particular we state in Section \ref{sect:Deterto1} that $\Stab(r,h,\delta')$ has a probability tending to $1$ as $h\to\infty$ uniformly on $r$. But first, in Section \ref{Appendix:MBD}, we recall some properties of the Maximal Backward angular Deviations (MBD) which is the key tool here to control the path fluctuations in the hyperbolic RST.

\subsection{Maximal backward angular deviations}
\label{Appendix:MBD}

A crucial ingredient in this work is the control of \textit{Maximal Backward angular Deviations} (MBD) which has been done in \cite{flammant-RST} (see also \cite{HyperbolicDSF} where these notions are introduced). Let us recall here the main definitions and results.

Given $r>0$ and $z\in \mathcal{L}_r$, recall that $z_{\downarrow}$ denotes the Poisson point whose arc $|\![z_{\downarrow}, A(z_{\downarrow})]\!|$ crosses $S(r)$ at $z$ and set $z_{\uparrow}:=A(z_{\downarrow})$. Let also $A^{(k)}(z) := A\circ\ldots\circ A(z)$ ($k$ times) be the $k$-th ancestor of $z$ for any $k\in\mathbb{N}$ (by convention, we set $A(0):=0$).

Consider $0<r \leq r'$ and $z'\in\mathcal{L}_{r'}$. Let us denote by $z\in\mathcal{L}_{r}$ the intersection point between the path of $\RST$ joining $z'$ to the origin and the sphere $S(r)$. Let us define $\CFD_r^{r'}(z')$ as the \textit{Cumulative Forward angular Deviations between levels $r$ and $r'$} as
\begin{eqnarray*}
\CFD_r^{r'}(z'):=
\left\{
\begin{aligned}[l]
&\widehat{z' 0 z} \text{ if } z_\downarrow=z'_\downarrow,\\
&\widehat{z' 0 z'_\uparrow}+\sum_{k=0}^{n-1}\widehat{A^{(k)}(z'_\uparrow) 0 A^{(k+1)}(z'_\uparrow)}+\widehat{z_\downarrow 0 z} \text{ else,}
\end{aligned}
\right.
\end{eqnarray*}
where $n$ is the unique non negative integer such that $A^{(n)}(z'_\uparrow)=z_\downarrow$. We also set $\CFD_r^{r'}(z'):=0$ when $z' \notin \mathcal{L}_{r'}$.

\begin{definition}[Maximal Backward angular Deviations]
\label{def:MBD}
For $0<r \leq r'$, let us define the \textit{Maximal Backward angular Deviations between levels $r$ and $r'$} as
\begin{equation}
\label{MBD}
\MBD_r^{r'}(z) := \sup_{\rho \in [r,r']} \max_{y \in \mathcal{D}_{r}^{\rho}(z)} \CFD_{r}^{\rho}(y)
\end{equation}
if $z \in \mathcal{L}_{r}$ and $\MBD_r^{r'}(z):=0$ if $z \notin \mathcal{L}_{r}$, where $\mathcal{D}_{r}^{\rho}(z)$ is the set of points $y\in\mathcal{L}_\rho$ whose path of $\RST$ from $y$ to $0$ cuts $S(r)$ at $z$ (or, roughly speaking, the set of descendants of $z$ at level $\rho$).
\end{definition}

Since $r' \mapsto \MBD_r^{r'}(z)$ is non-decreasing, the MBD can be naturally extend to $r'=\infty$ by setting:
\[
\MBD_r^\infty(z) := \lim_{r' \to \infty} \MBD_r^{r'}(z) ~.
\]
The next lemma provides a control of the moments of $\MBD_r^\infty(z)$, which is crucial for proving the positive density. The idea is that because the paths do not fluctuate too much, there is room for a positive number of points at a given level $r$ to have a thick trace at infinity.

\begin{lemma}[Proposition 2.6 of \cite{flammant-RST}]
\label{lem:MBD}
For any $p\geq 3d/2$, there exists a constant $C=C(d)>0$ such that, for any $r>2$, $A>0$ and any direction $u \in \mathbb{S}^d$,
\begin{equation}
\mathbb{E} \Big[ \sum_{z \in B_{S(r)}(u , A e^{-r}) \cap \RST} \big( \MBD_{r}^\infty(z) \big)^p \Big] \leq  C A^d e^{-rp} ~.
\end{equation}
\end{lemma}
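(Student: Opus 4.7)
My strategy combines a Palm--Mecke reduction to a one-point estimate with a sharp tail bound for the MBD of a single trajectory. Each $z \in \mathcal{L}_r$ is uniquely determined by the Poisson point $z_\downarrow \in \mathcal{N}$ outside $S(r)$ whose arc $|\![z_\downarrow, A(z_\downarrow)]\!|$ crosses the sphere at $z$. Applying the Mecke formula to $z_\downarrow$, the expectation under consideration becomes
\[
\lambda \int_{y \in \mathbb{H}^{d+1} \setminus B(r)} \mathbb{E}\Bigl[(\MBD_r^\infty(z(y)))^p\,\ind_{\text{arc of }y\text{ crosses } B_{S(r)}(u,Ae^{-r})}\Bigr] \, dy,
\]
where $z(y)$ denotes the crossing point of the arc issued from $y$. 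Since each arc is monotone in $d(0,\cdot)$ and the deviation of a single edge is small, the integrand is supported in a shell $V$ just above the cap whose hyperbolic volume is $\Vol(V) = O(A^d)$: the factor $\sinh^d(\rho) \asymp e^{\rho d}$ from \eqref{E:dvolpolar} cancels the angular bound $\sigma(B_{S(r)}(u,Ae^{-r})) = O(A^d e^{-rd})$ coming from \eqref{area_cap}. Hence the lemma reduces to showing $\mathbb{E}[(\MBD_r^\infty(z(y)))^p] = O(e^{-rp})$ uniformly in the base point $y$.

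\emph{Single-point tail bound.} The geometric core of the argument is to prove a tail estimate of the form
\[
\mathbb{P}\bigl(\MBD_r^\infty(z) \geq \theta\bigr) \leq C\exp(-c\,\theta\,e^r), \qquad \theta \in (0,1].
\]
The intuition is that a descendant at cumulative angular deviation $\geq \theta$ from $z$ forces an emptiness configuration of the PPP along a ``tube'' between $z$ and that descendant, because otherwise the ancestor selection rule \eqref{def:A(z)} would have routed the path differently. Since intrinsic distances on $S(\rho)$ scale like $\sinh(\rho) \asymp e^\rho$, an angle $\theta$ at level $r$ corresponds to a transverse intrinsic displacement of order $\theta e^r$, and the excluded region has hyperbolic volume at least $c\,\theta\,e^r$, producing the exponential emptiness probability. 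Integrating $\mathbb{E}[(\MBD_r^\infty(z))^p] = p\int_0^\infty \theta^{p-1}\mathbb{P}(\MBD_r^\infty(z)\geq \theta)\,d\theta$ then yields the desired $O(e^{-rp})$ bound.

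\emph{Main obstacle.} The real difficulty is that $\MBD_r^\infty(z)$ is a supremum over the random, unbounded set of \emph{all} descendants at \emph{all} levels $\rho \geq r$, and successive edges along a descending path are correlated through the overlapping empty balls $B^+(\cdot)$. Straightness (Property~\ref{Prop:straightness}) alone only yields $\MBD_r^\infty = O(e^{-(1-\varepsilon)r})$, which is a factor $e^{\varepsilon r}$ too weak. To recover the sharp decay, I would dyadically decompose $[r,\infty)$, use the Covering Lemma~\ref{Lem:covering} to list $O(e^{(\rho-r)d})$ candidate descendant positions at each scale $\rho$, and perform a Peierls-style union bound-- analogous to the one in Section~\ref{Sect:H-deltaUnbounded}-- balancing this polynomial cost against the exponential single-path tail. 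The edge dependencies would be tamed by restricting to disjoint annuli or by a stochastic domination argument. The hypothesis $p \geq 3d/2$ is precisely what makes the multi-scale union bound summable while retaining the $e^{-rp}$ decay rate: the extra factor $d/2$ compensates for the combinatorial cost of enumerating $O(e^{\rho d})$ descendant candidates and the Cauchy--Schwarz loss required to decouple overlapping exclusion regions.
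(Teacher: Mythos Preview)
The paper does not prove this lemma; it is quoted verbatim from \cite{flammant-RST} (Proposition~2.6 there), so there is no in-paper proof to compare your attempt against. Your sketch is therefore being measured against an external argument that is not reproduced here.

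As a sketch your plan is reasonable in spirit, but it has genuine gaps at two points.

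First, in the Mecke reduction you assert that the integrand is supported in a shell $V$ ``just above the cap'' with $\Vol(V)=O(A^d)$, on the grounds that ``the deviation of a single edge is small''. This is not automatic: the edge $(z_\downarrow,A(z_\downarrow))$ has random, unbounded length, and $z_\downarrow$ can sit at any radius $r'>r$; a long edge can also cross the cap $B_{S(r)}(u,Ae^{-r})$ while $z_\downarrow$ lies far outside the corresponding cone. The contribution of edges of length $\geq L$ decays like $\exp(-\lambda c\, e^{dL/2})$ (cf.\ \eqref{LowBoundB+}), so it is controllable, but this tail has to be written out and integrated against the growing shell volumes---you cannot simply declare the support bounded.

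Second, and more seriously, the tail bound $\P(\MBD_r^\infty(z)\geq\theta)\leq C e^{-c\theta e^r}$ is precisely the hard content of the lemma, and your justification for it is entirely heuristic. The passage from ``cumulative angular deviation $\geq\theta$'' to ``an excluded region of volume $\geq c\theta e^r$'' is not a one-line consequence of the ancestor rule: the empty half-balls $B^+(\cdot)$ along a backward path overlap, their union need not grow linearly in the cumulative angle, and the supremum defining $\MBD$ runs over a \emph{branching} set of descendants at all levels, not a single path. You acknowledge this as the ``Main obstacle'' and then describe what you \emph{would} do (dyadic scales, Covering Lemma, Peierls, stochastic domination) without doing any of it. That multi-scale analysis is essentially the entire proof; everything else is bookkeeping. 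In particular, your explanation of the threshold $p\geq 3d/2$ is post-hoc---without the actual union-bound arithmetic there is no way to see that $3d/2$ (rather than $d$ or $2d$) is the correct exponent.
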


\subsection{Positive density of $\sigma(\mathcal{D}^\infty(\cdot))>0$}
\label{sect:PositiveDensity>0}

\begin{lemma}
\label{Lem:PositiveDensity}
There exists $A>0$ large and $c_0=c_0(A,d,\lambda)>0$ such that for any $0<\delta'<1$, $h_0>0$ and $r>0$, there exists $h \in [h_0,h_0+A]$ such that for any $z_2=(r+h;u)$ ($u\in \mathbb{S}^d$)
\begin{equation}
\label{PositiveDensity}
\E \Big[ \# \big\{ z \in \NN \cap B(z_2,\delta') : \, \sigma(\mathcal{D}^\infty(z)) > 0 \big\} \Big] \geq c_0 \Vol \big( B(\delta') \big) ~.
\end{equation}
Note also that among the previous parameters, only $h=h(r)$ may depend on $r$.
\end{lemma}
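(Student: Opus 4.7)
The plan is to use Slivnyak--Mecke's formula to rewrite
\[
F(z_2) := \E[\#\{z \in \NN \cap B(z_2, \delta') : \sigma(\mathcal{D}^\infty(z)) > 0\}] = \lambda \int_{B(z_2, \delta')} g(d(0,x))\, dx,
\]
where $g(\rho) := \P_x^!(\sigma(\mathcal{D}^\infty(x)) > 0)$ is a function of the level only by isotropy of $\mathbb{H}^{d+1}$. The task reduces to showing that for some $h \in [h_0, h_0+A]$, the level-integral of $g$ over $B(z_2(h), \delta')$ is at least $c_0 \Vol(B(\delta'))/\lambda$; this will come from a Paley--Zygmund argument on the arcs crossing $S(\rho)$, followed by a Fubini-type averaging over $h$.

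The first step is to lower-bound the count $X_\rho := \#\{z \in \mathcal{L}_\rho : \sigma(\mathcal{D}^\infty(z)) > 0\}$. By Howard--Newman's property (B), valid in any dimension as a consequence of straightness (Property \ref{Prop:straightness}), every direction of $\mathbb{S}^d$ is reached by an infinite path of the RST, so $\sum_{z \in \mathcal{L}_\rho} \sigma(\mathcal{D}^\infty(z)) \geq 1$ almost surely. Combined with the diameter bound $\sigma(\mathcal{D}^\infty(z)) \leq C (\MBD_\rho^\infty(z))^d$ coming from straightness, Cauchy--Schwarz produces
\[
X_\rho \geq \frac{1}{C^2 \sum_z (\MBD_\rho^\infty(z))^{2d}}.
\]
Summing Lemma \ref{lem:MBD} with $p = 2d$ over a covering of $S(\rho)$ by caps of angular radius $e^{-\rho}$ (Lemma \ref{Lem:covering}) bounds $\E[\sum_z (\MBD_\rho^\infty(z))^{2d}]$ by a constant multiple of $e^{-d\rho}$; together with Jensen's inequality this yields $\E[X_\rho] \geq c\, e^{d\rho}$ with $c > 0$ uniform in $\rho$.

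Next, this global count is converted into an integral estimate on $g$. Each $z \in \mathcal{L}_\rho$ corresponds to a unique Poisson point $x$ with $d(0,x) > \rho > d(0, A(x))$, so Slivnyak--Mecke combined with isotropy gives
\[
\E[X_\rho] = \lambda \int_\rho^\infty \sinh^d(r)\, \P_x^!\bigl(d(0, A(x)) < \rho,\; \sigma(\mathcal{D}^\infty(x)) > 0\bigr)\, dr,
\]
the integrand evaluated at any $x$ with $d(0,x) = r$. I would bound the joint probability by $g(r)$ on $[\rho, \rho+K]$ and by the emptiness estimate $\P_x^!(d(0,A(x)) < \rho) \leq \exp(-c\lambda\,\Vol(B^+(x, r-\rho))) \leq \exp(-c'\lambda\, e^{d(r-\rho)})$ (using \eqref{InegVol}) on $[\rho+K,\infty)$. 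Choosing $K$ large enough that the super-exponentially small tail cannot absorb $ce^{d\rho}/2$, one obtains
\[
\int_\rho^{\rho+K} g(r)\, dr \geq c'' > 0
\]
for a constant $c''$ uniform in $\rho$.

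Finally, setting $A := K+2$, one integrates $F(z_2(h))$ over $h \in [h_0, h_0+A]$. A polar decomposition of $B(z_2(h), \delta')$ together with the hyperbolic distance formula shows that for each fixed $r' \in [r+h_0+\delta', r+h_0+A-\delta']$, $\int_{h_0}^{h_0+A} \Vol(S(r') \cap B(z_2(h), \delta'))\, dh \asymp (\delta')^{d+1}$ (constants depending on $d$ only), which is comparable to $\Vol(B(\delta'))$. Combining this with the lower bound on $\int g$ from the previous step applied on an interval of length $\geq K$ inside $[r+h_0, r+h_0+A]$, Fubini gives $\int_{h_0}^{h_0+A} F(z_2(h))\, dh \geq c_0 A\, \Vol(B(\delta'))$ with $c_0 = c_0(A, d, \lambda) > 0$ independent of $\delta'$ and $r$; some $h \in [h_0, h_0+A]$ thus realizes the required bound. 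The main technical obstacle is the second step: extracting a useful integral lower bound on $g$ from the counting bound on $X_\rho$ requires the super-exponential decay of the ancestor-distance tail to be uniform in the level of $x$, which rests on \eqref{InegVol} and on the fact that the half-ball $B^+(x, t)$ always captures a uniformly positive fraction of $\Vol(B(x, t))$.
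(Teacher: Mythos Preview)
Your proof is correct in substance and follows the same three-step architecture as the paper's (level-set count, passage from $\mathcal{L}_\rho$ to Poisson points, averaging to extract a good $h$), but the middle step is organised differently. The paper's Step~2 stays at the level of counting: it shows via an auxiliary lemma (Lemma~\ref{Lem:Densite1/2}) that a constant fraction of the $z\in\mathcal{L}_r$ with thick trace already have $z_\downarrow$ in the bounded annulus $C(r,r+A-2\delta')$, and thereby obtains directly a density lower bound on Poisson points in that annulus. You instead isolate the one-variable function $g(r)=\P_x^!(\sigma(\mathcal{D}^\infty(x))>0)$ via Slivnyak--Mecke and show $\int_\rho^{\rho+K} g\geq c''$ by splitting the integral representation of $\E[X_\rho]$ into a head (bounded above by $\lambda\int\sinh^d\cdot g$) and a tail (controlled by the emptiness probability for $B^+(x,r-\rho)$). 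Both routes ultimately rest on the same tail estimate; yours has the advantage of making the role of $g$ transparent and bypassing Lemma~\ref{Lem:Densite1/2}, at the cost of a slightly more delicate polar/Fubini computation in the final averaging step. Your Step~1 also differs cosmetically: you apply Cauchy--Schwarz pointwise and then Jensen, whereas the paper applies Cauchy--Schwarz directly with the inner product $\langle X,Y\rangle=\E[\sum_i X_iY_i]$; both give the same bound $\E[X_\rho]\geq 1/\E[\sum_z\MBD^{2d}]$.

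One correction: the half-ball $B^+(x,t)$ does \emph{not} in general capture a uniformly positive fraction of $\Vol(B(x,t))$, and this does not follow from \eqref{InegVol}. The available bound is the weaker \eqref{LowBoundB+}, namely $\Vol(B^+(x,t))\geq c\,e^{d(t\wedge d(0,x))/2}$, which here gives $\exp(-c\lambda\, e^{d(r-\rho)/2})$ rather than $\exp(-c'\lambda\, e^{d(r-\rho)})$ for the tail probability. This is still doubly exponential, so after the substitution $s=r-\rho$ the tail integral $\int_K^\infty e^{ds}\exp(-c\lambda e^{ds/2})\,ds$ can be made smaller than $c/(2\lambda)$ by choosing $K$ large, and your argument goes through unchanged. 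Just cite \eqref{LowBoundB+} instead of \eqref{InegVol} at that point.
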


\begin{proof}[Proof of Lemma \ref{Lem:PositiveDensity}] The proof is splitted into three steps. We first start with proving an estimate close to \eqref{PositiveDensity}, but for $z\in \mathcal{L}_r$ (Step 1). Then, we extend the result to portion of annuli and to balls.\\

\noindent
\textbf{Step 1.} Let us first prove that there exists $c=c(d)>0$ such that for any {$r> 0$},
\begin{equation}
\label{densite>0-Lr}
\E \Big[ \# \big\{ z \in \mathcal{L}_{r} : \, \sigma(\mathcal{D}^{\infty}(z)) > 0 \big\} \Big] \geq c e^{dr} ~.
\end{equation}

Let us first use the Cauchy-Schwarz inequality with the inner product $\langle X,Y \rangle := \E[\sum_i X_i Y_i]$:
\begin{eqnarray}
\label{Density>0-CS}
\E \Big[ \sum_{z \in \mathcal{L}_{r}} \sigma(\mathcal{D}^{\infty}(z)) \Big]^2 & \leq & \E \Big[ \sum_{z \in \mathcal{L}_{r}} \ind_{\sigma(\mathcal{D}^{\infty}(z))>0} \Big] \E \Big[ \sum_{z \in \mathcal{L}_{r}} \sigma(\mathcal{D}^{\infty}(z))^2 \Big] \nonumber \\
& = & \E \Big[ \# \big\{ z \in \mathcal{L}_{r} : \, \sigma(\mathcal{D}^{\infty}(z)) > 0 \big\} \Big] \E \Big[ \sum_{z \in \mathcal{L}_{r}} \sigma(\mathcal{D}^{\infty}(z))^2 \Big] ~.
\end{eqnarray}
Thus, the left hand side of \eqref{densite>0-Lr} is lower bounded by
\[\E \Big[ \# \big\{ z \in \mathcal{L}_{r} : \, \sigma(\mathcal{D}^{\infty}(z)) > 0 \big\} \Big]\geq \frac{\E \Big[ \sum_{z \in \mathcal{L}_{r}} \sigma(\mathcal{D}^{\infty}(z)) \Big]^2}{\E \Big[ \sum_{z \in \mathcal{L}_{r}} \sigma(\mathcal{D}^{\infty}(z))^2 \Big]}.\]

Recall that in the open-ball model, the set of points at infinity $\partial \H^{d+1}$ is identified with the unit $d$-dimensional sphere $\mathbb{S}^d$ whose spherical probability measure $\sigma(\mathbb{S}^d)$ is (normalized to) $1$. By \cite[Theorem 1.1 (i)]{flammant-RST}, with probability $1$, any point at infinity is the asymptotic direction of (at least) one infinite path of the hyperbolic RST. So,
\begin{equation}
\label{CSQ-Th1.1(i)}
\mbox{a.s. } \, 1 = \sigma(\mathbb{S}^d) \leq \sum_{z \in \mathcal{L}_{r}} \sigma(\mathcal{D}^{\infty}(z)) ~.
\end{equation}
Given {$r>0$} and $z\in\mathcal{L}_r$, let us denote by $\textrm{Ang}(z):=\sup\{\widehat{z0I} : I \in \mathcal{D}^{\infty}(z)\}$ if $\mathcal{D}^{\infty}(z)\not=\emptyset$ and $\textrm{Ang}(z):=0$ otherwise. Then, the set $\mathcal{D}^{\infty}(z)$ is included in the spherical cap $\Cone(z,\textrm{Ang}(z)) \cap \partial \H^{d+1}$, which means $\sigma(\mathcal{D}^{\infty}(z)) \leq C \textrm{Ang}(z)^d$. Moreover, the quantity $\textrm{Ang}(z)$ is bounded by the Maximal Backward angular Deviation $\MBD_{r}^{\infty}(z)$: see Definition \ref{def:MBD}. We then get
\begin{equation}
\label{Ang-MBD}
\E \Big[ \sum_{z \in \mathcal{L}_{r}} \sigma(\mathcal{D}^{\infty}(z))^2 \Big] \leq \E \Big[ \sum_{z \in \mathcal{L}_{r}} \MBD_{r}^{\infty}(z)^{2d} \Big] \leq C e^{-dr}
\end{equation}
by Lemma \ref{lem:MBD} (applied with $A=\pi e^{r}$ and $p=2d$), where the constant $C=C(d)>0$ does not depend on $r>0$.

Finally, \eqref{densite>0-Lr} follows from combining \eqref{Density>0-CS}, \eqref{CSQ-Th1.1(i)} and \eqref{Ang-MBD}.\\

Inequality \eqref{densite>0-Lr} refers to the elements of $\mathcal{L}_r$ while \eqref{PositiveDensity} concerns Poisson points. Passing from ones to others while preserving the uniformity of $A$ in $r$ requires some technical considerations. We now extend \eqref{densite>0-Lr} to annuli and then to balls.\\ 

\noindent
\textbf{Step 2.} For any $A$ large enough there exists $c_0=c_0(d,A)>0$ such that for any $r>0$ and $0<\delta'<1$,
\begin{equation}
\label{Densite>0inC}
\E \Big[ \# \big\{ z \in \NN \cap C(r,r+A-2\delta') : \, \sigma(\mathcal{D}^{\infty}(z)) > 0 \big\} \Big] \geq c_0 \Vol\big( C(r,r+A-2\delta') \big) ~,
\end{equation}where we recall that $C(r,R)$ has been defined in the notations of Section \ref{Section:hyp_geom}.

For $z\in \mathcal{L}_r$, recall that $z_{\downarrow}$ denotes the Poisson point whose arc $|\![z_{\downarrow}, A(z_{\downarrow})]\!|$ crosses $S(r)$ in $z$. In particular, $\sigma(\mathcal{D}^{\infty}(z))$ and $\sigma(\mathcal{D}^{\infty}(z_{\downarrow}))$ are equal. Henceforth,
\begin{eqnarray}
\lefteqn{\E \Big[ \# \big\{ z \in \NN \cap C(r,r+A-2\delta') : \, \sigma(\mathcal{D}^{\infty}(z)) > 0 \big\} \Big]} \nonumber\\
& &\hspace*{1cm} \geq \E \Big[ \# \big\{ z \in \mathcal{L}_r : \, z_{\downarrow} \in B(r+A-2\delta') \, \mbox{ and } \, \sigma(\mathcal{D}^{\infty}(z)) > 0 \big\} \Big] \nonumber\\
& &\hspace*{1cm} \geq c_1 \E \Big[ \# \big\{ z \in \mathcal{L}_r : \, \sigma(\mathcal{D}^{\infty}(z)) > 0 \big\} \Big].\label{etape2}
\end{eqnarray}
The inequality \eqref{etape2} makes the object of Lemma \ref{Lem:Densite1/2}, and its proof is postponed to Section \ref{Sect:TechLemmas}. This choice is done as the proof uses arguments similar to the ones developed in the next Section in a more difficult case. The constants $c_1=c_1(d)>0$ and $A$ above are chosen large enough according to Lemma \ref{Lem:Densite1/2}. It then remains to use Step 1 and the inequalities
\[
\Vol \big( C(r,r+A-2\delta') \big) \leq \Vol \big( B(r+A) \big) \leq C e^{d(r+A)} ~,
\]
to finally get
\begin{multline*}
\E \Big[ \# \big\{ z \in \NN \cap C(r,r+A-2\delta') : \, \sigma(\mathcal{D}^{\infty}(z)) > 0 \big\} \Big]\\
\begin{aligned} \geq & c_1 \ c\ e^{dr}\\
\geq & c_1 \times c \times (C e^{dA})^{-1} \Vol \big( C(r,r+A-2\delta') \big) ~.
\end{aligned}
\end{multline*}

\noindent
\textbf{Step 3.} For short, let us set $f(z'):=\P(\sigma(\mathcal{D}^{\infty}(z'))>0 \,|\, z'\in\NN)$. The Mecke's formula and Fubini's theorem allow us to write:
\begin{eqnarray}
\label{Campbell*2&Fubini}
\lefteqn{\int_{C(r,r+A)} \E \Big[ \# \big\{ z' \in \NN \cap B(z,\delta') : \, \sigma(\mathcal{D}^{\infty}(z')) > 0 \big\} \Big] \, dz} \nonumber\\
& &\hspace*{2cm} = \lambda \int_{z\in C(r,r+A)} \int_{z'\in B(z,\delta')} f(z') \, dz' \, dz \nonumber\\
& &\hspace*{2cm} = \lambda \int_{z'\in C(r-\delta',r+A+\delta')} f(z') \Vol\big( B(z',\delta')\cap C(r,r+A) \big) \, dz' \nonumber\\
& &\hspace*{2cm} \geq \lambda \Vol\big( B(\delta') \big) \int_{z'\in C(r+\delta',r+A-\delta')} f(z') \, dz' \nonumber\\
& &\hspace*{2cm} \geq \lambda \Vol\big( B(\delta') \big) \E \Big[ \# \big\{ z' \in \NN \cap C(r+\delta',r+A-\delta') : \, \sigma(\mathcal{D}^{\infty}(z')) > 0 \big\} \Big] \nonumber\\
& &\hspace*{2cm} \geq \lambda \Vol\big( B(\delta') \big) c_0 \Vol\big( C(r+\delta',r+A-\delta') \big)
\end{eqnarray}
by Step 2.

Now, using Inequalities (\ref{InegVol}), it is not difficult to choose $\tilde{c}=\tilde{c}(d)>0$ (small) and $A=A(d)>0$ large enough and uniform on $r>0$ and $\delta'<1$ such that $\Vol(C(r+\delta',r+A-\delta'))$ is bigger than $\tilde{c} \Vol(C(r,r+A))$. Combining with \eqref{Campbell*2&Fubini}, we get
\[
\int_{C(r,r+A)} \E \Big[ \# \big\{ z' \in \NN \cap B(z,\delta') : \, \sigma(\mathcal{D}^{\infty}(z')) > 0 \big\} \Big] \, dz \geq c_2 \Vol\big( B(\delta') \big) \Vol\big( C(r,r+A) \big) ~.
\]
with $c_2 := \lambda c_0 \tilde{c}$. This forces the existence of some $(r+h ; u) \in C(r,r+A)$, with $h \in [0,A]$ and $u \in \mathbb{S}^d$, satisfying
\begin{equation}
\label{Fin-c2}
\E \Big[ \# \big\{ z' \in \NN \cap B( (r+h ; u) ,\delta') : \, \sigma(\mathcal{D}^{\infty}(z')) > 0 \big\} \Big] \geq c_2 \Vol\big( B(\delta') \big) ~.
\end{equation}

To conclude, let us first specify that (\ref{Fin-c2}) holds for any direction $u \in \mathbb{S}^d$ by isotropy of the model. Moreover, for any given $h_0 > 0$, the radius $r+h$ with $r > h_0$ can be written as $r'+h'$ where $r' > 0$ and $h' \in [h_0,h_0+A]$. Hence, we have proved that there exists $A$ large and $c_2=c_2(A,d,\lambda) > 0$ such that for any $0<\delta'<1$, $h_0 > 0$ and $r > 0$, there exists $h \in [h_0,h_0+A]$ such that (\ref{Fin-c2}) holds for any $u \in \mathbb{S}^d$. This is Lemma \ref{Lem:PositiveDensity}.

This last change on quantifiers, i.e. $r>h_0$ and $h \in [0,A]$ replaced with $r>0$ and $h \in [h_0,h_0+A]$, will allow us to take simultaneously $h$ in the good interval $[h_0,h_0+A]$ and also large enough. See Section \ref{sect:Conclusion}.
\end{proof}

\subsection{A stabilization result for subtrees of the RST}
\label{sect:Deterto1}

For $0<\delta'<\delta$ and $r,h>0$, recall that $z_2=(r+h;u)$ and recall the definition of $\Stab(r,h,\delta')$ in \eqref{def:Stab}. 
In this section, we prove a stabilization result: subtrees of the RST rooted at Poisson points in $B(z_2,\delta')$ do not depend on what happens inside $B(r)$ w.h.p. as $h\to\infty$.

\begin{lemma}
\label{Lem:LimitDeter}
For any $0<\delta'<\delta$,
\[
\lim_{h \to\infty} \, \sup_{r>0} \, \mathbb{P} ( \Stab(r,h,\delta') ) = 1 ~.
\]
\end{lemma}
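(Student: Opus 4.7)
The plan is to realize $\Stab(r,h,\delta')$ as a consequence of a short-edge condition and then show that the latter has probability tending to one uniformly in $r$ as $h\to\infty$. The key sufficient condition is that every Poisson point $z'$ on the ancestor chain of some $z\in\NN\cap B(z_2,\delta')$ satisfies $d(z',A(z'))<d(0,z')-r$. Under this inequality the Poisson-void ball $B(z',d(z',A(z')))$ that certifies the choice of $A(z')$ is disjoint from $B(r)$, so modifying $\NN\cap B(r)$ cannot alter $A(z')$ anywhere along the chain: existing descendants stay in the subtree, and no new descendant can enter via a modification-induced ancestor flip.

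To localize the chain in a thin cone around $u$ uniformly in $r$, set $r_*:=r+h-\delta'$, fix a constant $A>\delta'$ and an exponent $p>d$, and consider
\[
\Omega_M := \Bigl\{\max_{\zeta\in B_{S(r_*)}(u,Ae^{-r_*})\cap\mathcal{L}_{r_*}}\MBD_{r_*}^\infty(\zeta)\leq Me^{-r_*}\Bigr\}.
\]
Lemma~\ref{lem:MBD} applied with parameter $A$, combined with Markov's inequality, gives $\mathbb{P}(\Omega_M^c)\leq CA^d/M^p$, independent of $r$. On $\Omega_M$ (augmented by an auxiliary short-edge event in the thin annulus $C(r_*,r+h+\delta')$ intersected with a cone around $u$, so that the ``preimages'' $\zeta(z)\in\mathcal{L}_{r_*}$ of the points $z\in\NN\cap B(z_2,\delta')$ do lie in the cap), every descendant of such $z$ is contained in $\Cone(u,(A+M)e^{-r_*})$. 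The Mecke formula combined with the tail bound $\mathbb{P}(d(z',A(z'))\geq t)\leq\exp(-c\lambda e^{dt})$ for $t\geq 1$ (which follows from (\ref{InegVol}) applied to $B^+(z',t)$) then yields, after polar integration and the substitution $s=d(0,z')-r$,
\[
\E\bigl[\#\{z'\in\NN \text{ bad}\}\bigr]\leq C'(A+M)^d\, e^{-d(h-\delta')}\exp\!\bigl(-c\lambda e^{d(h-\delta')}\bigr).
\]
The crucial cancellation is between the $e^{dr}$ arising from $\sinh^d(\rho)\sim e^{d\rho}$ at level $\rho=s+r$ and the $e^{-dr_*}$ coming from the cap $\sigma$-measure $\leq C(A+M)^d e^{-dr_*}$, so the bound is uniform in $r$ and decays double-exponentially in $h$.

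Combining the three ingredients, $\mathbb{P}(\Stab(r,h,\delta')^c)\leq\mathbb{P}(\Omega_M^c)+\text{(auxiliary failure)}+\E[\#\text{bad}]$, and letting first $M\to\infty$ and then $h\to\infty$ drives each term to zero uniformly in $r$. The main obstacle is the bootstrap verifying that the preimages of $\NN\cap B(z_2,\delta')$ actually lie in the cap $B_{S(r_*)}(u,Ae^{-r_*})$: a priori, the RST ancestor chain from $z$ down to level $r_*$ could wander angularly, placing $\zeta(z)$ outside the cap, where $\Omega_M$ provides no control. I handle this via the auxiliary short-edge event over the intersection of $C(r_*,r+h+\delta')$ with a uniform-aperture cone around $u$, which has hyperbolic volume bounded independently of $r$ and whose failure probability is therefore uniformly controllable.
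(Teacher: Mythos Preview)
Your approach is essentially the paper's: split $\Stab(r,h,\delta')^c$ into (i) failure of the descendant tree $\chi=\bigcup_{z\in\NN\cap B(z_2,\delta')}\mathcal{D}(z)$ to stay in a thin cone around $u$, and (ii) the existence of a Poisson point $z'$ in that cone with $d(z',A(z'))\geq d(0,z')-r$. The paper calls these $\Romanbar{1}$ and $\Romanbar{2}$; your $\Omega_M^c$ together with the auxiliary cap-preimage event plays the role of $\Romanbar{1}$, and your ``bad'' count is $\Romanbar{2}$. The Mecke/empty-ball computation for (ii) and the MBD bound via Lemma~\ref{lem:MBD} for (i) are the same.

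Two corrections are needed. First, inequality~(\ref{InegVol}) bounds $\Vol(B(t))$, not $\Vol(B^+(z',t))$; in hyperbolic space the half-ball $B^+(z',t)=B(z',t)\cap B(d(0,z'))$ is exponentially smaller than the full ball, and the correct lower bound is~(\ref{LowBoundB+}), giving only $\Vol(B^+)\geq c\,e^{d(t\wedge d(0,z'))/2}$. Your tail should therefore read $\exp(-c\lambda e^{dt/2})$. The decay is still double-exponential in $h$, so the conclusion survives, but the exponent is halved. Second, your auxiliary ``short-edge event over a bounded-volume slab'' does not, as stated, control the angular position of the preimage $\zeta(z)\in\mathcal{L}_{r_*}$: short edges alone do not bound the \emph{number} of steps the ancestor chain of $z$ makes inside the thin annulus $C(r_*,r+h+\delta')$ (each step must decrease the radius, but with no lower bound on the decrement), so the cumulative angular drift is not obviously $O(e^{-r_*})$, and you cannot conclude the chain stays in your cone before exiting the slab. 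The paper sidesteps this by invoking a ready-made fluctuation estimate (the event $\Rad$, with $\P(\Rad^c)\leq e^{-cM}$ by Lemma~4.3 of \cite{flammant-RST}) rather than arguing from edge lengths; you should either cite that lemma or supply a genuine bootstrap.
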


\begin{proof}
Let us denote by $\chi$ the union of descendant sets $\mathcal{D}(z)$ with $z$ in $\mathcal{N} \cap B(z_2,\delta')$. So, for the configuration in $B(r)$ to alter $\chi$, it must exist a vertex $z'=(r';u')\in\chi$ whose $B^+(z',d(z',A(z')))$ overlaps $B(r)$, which means $d(z',A(z'))\geq r'-r$. Roughly speaking, this would imply the occurrence of a large ball empty of Poisson points with radius $r'-r \geq h-\delta'$ which is very unlikely as $h\to\infty$. Hence, $\Stab(r,h,\delta')^\complement$ is included in $\Romanbar{1}\cup\Romanbar{2}$ where, for a positive constant $M>0$,
\[
\Romanbar{1} := \big\{ \chi \not\subset \Cone(z_2 , M e^{-r-h}) \big\}
\]
and 
\[
\Romanbar{2} :=  \big\{ \exists z'=(r';u') \in \mathcal{N} \cap \Cone(z_2 , M e^{-r-h}) : \, d(z',A(z')) \ge r'-r \big\} ~ .
\]
We are going to prove that there exist $c,C>0$ (not depending on $r$, $h$ and $M$) such that
\begin{equation}
\label{IandII}
\forall r,h>0, \, \P(\Romanbar{1}) \leq \frac{C}{M^{d/2}} + e^{-cM} \; \mbox{ and } \; \lim_{h \to\infty} \, \sup_{r>0} \, \mathbb{P}(\Romanbar{2}) = 0 ~.
\end{equation}
The previous statement holding whatever the constant $M>0$, Lemma \ref{Lem:LimitDeter} then follows.\\

Let us deal with $\mathbb{P}(\Romanbar{2})$. By Mecke's formula and $\delta'<1$,
\begin{eqnarray*}
\mathbb{P}(\Romanbar{2}) & \leq & \E \big[ \#\big\{ z'=(r';u') \in \mathcal{N} \cap \Cone(z_2 , M e^{-r-h}) : \, d(z',A(z')) \ge r'-r \big\} \big] \\
& \leq & \sum_{n\geq h-1} \lambda \int_{V_n} \P \big( d(z',A(z')) \geq n \,|\, z' \in \mathcal{N} \big) \, dz'
\end{eqnarray*}
where $V_n := \Cone(z_2 , M e^{-r-h}) \cap C(r+n,r+n+1)$. On the one hand, for $z'\in V_n$,
\[
\P \big( d(z',A(z')) \geq n \,|\, z' \in \mathcal{N} \big) \leq \P \big( \mathcal{N} \cap B^+(z',n) = \emptyset \big) = e^{-\lambda \Vol(B^+(z',n))} ~.
\]
Here we need a lower bound of $B^+((r';\cdot) , \rho)$ and use the one obtained in \cite[eq. (A1)]{flammant-RST}: there exists $c=c(d)>0$ such that, for any radii $r',\rho\geq 1$,
\begin{equation}
\label{LowBoundB+}
\Vol \big( B^+((r';\cdot) , \rho) \big) \geq c e^{d (r' \wedge \rho)/2} ~.
\end{equation}
So we use (\ref{LowBoundB+}) to get 
\[\P(d(z',A(z'))\geq n \,|\, z' \in \mathcal{N}) \leq e^{-\lambda c e^{d n/2}}.\]

On the other hand, we upperbound the volume of $V_n$:
\begin{eqnarray*}
\Vol(V_n) & = & \int_{r+n}^{r+n+1} \sinh(\rho)^d \, d\rho \times \sigma \big( \{ u'\in\mathbb{S}^d : \, \widehat{u0u'} \le M e^{-r-h} \} \big) \\
& \leq & c_d e^{d(r+n)} \times M^d e^{-d(r+h)} \\
& = & c_d M^d e^{d(n-h)} ~.
\end{eqnarray*}
Putting together the previous upperbounds, we get:
\begin{equation}
\mathbb{P}(\Romanbar{2})  \leq  \lambda c_d M^d e^{-dh} \sum_{n\geq h-1} e^{dn -\lambda c e^{dn/2}},\label{etape3}
\end{equation}
which tends to $0$ as $h\to\infty$ uniformly on $r$.\\

Let us now consider the term $\mathbb{P}(\Romanbar{1})$. Let us denote by $\pi_z$ the path of $\RST$ starting from the Poisson point $z$ until the origin. We define $\text{Rad}(z_2)$ as the maximal angular deviation generated by a path $\pi_z$ starting from some $z\in B(z_2,\delta')$ when it goes through the sphere $S(r+h-\delta')$:
\[
\text{Rad}(z_2) := \max \big\{ \widehat{u0u'} : \, \exists z \in \mathcal{N} \cap B(z_2,\delta') \, \mbox{ s.t. $\pi_{z}$ intersects $S(r+h-\delta')$ at $(\cdot ; u')$} \big\} ~.
\]
The maximal angular deviation cannot be too large. Precisely, setting with $\text{Rad}:=\{\text{Rad}(z_2) \leq M e^{-(r+h-\delta')}\}$, Lemma 4.3 of \cite{flammant-RST} states that $\P(\Rad^\complement) \leq e^{-cM}$ for $c=c(d)>0$ and $M$ large enough. Besides,
\begin{eqnarray*}
\P ( \Romanbar{1} \cap \Rad ) & = & \P \big( \big\{ \exists z'=(r';\cdot) \in \mathcal{N} \cap B(z_2,\delta') : \, \MBD_{r'}^\infty(z') \geq M e^{-(r+h)} \big\} \cap \Rad \big) \\
& \leq & \E \Big( \ind_\Rad \sum_{z'\in\mathcal{N}\cap B(z_2,\delta')} \ind_{\{\MBD_{r'}^\infty(z')^d \geq M^d e^{-d(r+h)}\}} \Big) \\
& \leq & M^{-d} e^{d(r+h)} \E \Big[ \ind_\Rad \sum_{z'\in\mathcal{N}\cap B(z_2,\delta')} \MBD_{r'}^\infty(z')^d \Big].
\end{eqnarray*}
Now, in order to apply Lemma \ref{lem:MBD}, we have to turn the sum over elements of $\mathcal{N}\cap B(z_2,\delta')$ into a sum over elements of $\mathcal{L}_{r+h-\delta'}$. Given $z'=(r';\cdot)$, let us denote by $A_{r'}^{r+h-\delta'}(z')$ the intersection point between the path $\pi_{z'}$ of $\RST$ and $S(r+h-\delta')$. It is the `ancestor' at radius $r+h-\delta'<r'$ of $z'$ in $\RST$. A.s.
\begin{eqnarray*}
\lefteqn{\ind_\Rad \sum_{z'\in \mathcal{N} \cap B(z_2,\delta')} \MBD_{r'}^\infty(z')^{d}} \\
& & \leq \ind_\Rad \sum_{z'\in \mathcal{N} \cap B(z_2,\delta')} \MBD_{r+h-\delta'}^\infty(A_{r'}^{r+h-\delta'}(z'))^{d} \\
& & \leq \sum_{z'' \in B_{S(r+h-\delta')}(z_2,M e^{-(r+h-\delta')})\cap\RST} \# \big\{z'\in\mathcal{N}\cap B(z_2,\delta') : \ z' \in \mathcal{D}(z'') \big\} \MBD_{r+h-\delta'}^\infty(z'')^{d} .
\end{eqnarray*}
Thus, the Cauchy-Schwarz inequality gives:
\begin{eqnarray}
\label{2ndTermeCS}
\lefteqn{\E \Big[ \ind_\Rad \sum_{z'\in\mathcal{N}\cap B(z_2,\delta')} \MBD_{r'}^\infty(z')^d \Big]} \nonumber \\
& & \hspace*{1cm} \leq \E \Big[ \sum_{z'' \in \mathcal{L}_{r+h-\delta'}} \# \big\{z'\in\mathcal{N}\cap B(z_2,\delta') : \, z' \in \mathcal{D}(z'') \big\}^2 \Big]^{1/2} \nonumber \\
& & \hspace*{2cm} \times \E \Big[ \sum_{z'' \in B_{S(r+h-\delta')}(z_2,M e^{-(r+h-\delta')})\cap\RST} \MBD_{r+h-\delta'}^\infty(z'')^{2d} \Big]^{1/2} ~.
\end{eqnarray}
On the one hand, let us write
\begin{eqnarray*}
\lefteqn{\sum_{z'' \in \mathcal{L}_{r+h-\delta'}} \# \big\{z'\in\mathcal{N}\cap B(z_2,\delta') : \, z' \in \mathcal{D}(z'') \big\}^2} \\
& & \hspace*{2cm} \leq \Big( \sum_{z'' \in \mathcal{L}_{r+h-\delta'}} \# \big\{z'\in\mathcal{N}\cap B(z_2,\delta') : \, z' \in \mathcal{D}(z'') \big\} \Big)^2 \\
& & \hspace*{2cm} = \big( \# \mathcal{N} \cap B(z_2,\delta') \big)^2
\end{eqnarray*}
which means that the first term of the upper bound in (\ref{2ndTermeCS}) is bounded by $C_1 := \E[\#(\mathcal{N}\cap B(1))^2]^{1/2}$. Finally we apply Lemma \ref{lem:MBD} to the second term of the upper bound in (\ref{2ndTermeCS}): it is bounded by the square root of $C M^d e^{-2d(r+h-\delta')}$. Combining the previous bounds, we get 
\begin{equation}\label{etape4}
\P(\Romanbar{1} \cap \Rad)\leq C M^{-d/2}.
\end{equation}Gathering \eqref{etape3} and \eqref{etape4} proves Lemma \ref{Lem:LimitDeter}.
\end{proof}

\subsection{Conclusion}
\label{sect:Conclusion}

Let us prove Lemma \ref{Lem:unifF0}. We pay a special attention to dependencies between parameters. Let $A>0$ and $c_0=c_0(A,d,\lambda)>0$ given by Lemma \ref{Lem:PositiveDensity}. It is well known that
\begin{equation}
\label{Poisson>1}
\E \big[ \# \NN\cap B(z_2,\delta') \, \ind_{\# \NN \cap B(z_2,\delta') \geq 2} \big]
\end{equation}
which does not depend on $r,h$ by stationarity of the PPP, is negligible w.r.t. $\Vol(B(\delta'))$ as $\delta'\to 0$. Hence we choose $\delta'>0$ small enough and uniformly on $r,h>h_1=h_1(d)$ so that the expectation (\ref{Poisson>1}) and 
\[
\P \big( \exists z\in\mathcal{N}\cap B(z_2,\delta'),\ \mathcal{D}(z)\!\setminus\!\{z\} \not\subset B(r+h+\delta')^\complement \big)
\]
are both smaller than $\frac{c_0}{4} \Vol(B(\delta'))$. This second upper-bound and the constant $h_1=h_1(d)$ are proved in Lemma \ref{Lem:NegligibleDelta'}, whose proof is postponed in Section \ref{Sect:TechLemmas}. At this stage, parameters $r>0$ and $h>h_1$ are still free. Now we choose $h_0\geq h_1$ large enough so that for any $h\geq h_0$ and uniformly on $r>0$, the following holds by Lemma \ref{Lem:LimitDeter}:
\[
\P \big( \Stab(r,h,\delta')^\complement \big) \leq \frac{c_0}{4} \Vol(B(\delta')) ~.
\]
Finally, for any given radius $r>0$, we choose $h$ (possibly depending on $r$) in $[h_0,h_0+A]$ such that
\[
\E \Big[ \# \big\{ z \in \NN \cap B(z_2,\delta') : \, \sigma(\mathcal{D}^\infty(z)) > 0 \big\} \Big] \geq c_0 \Vol \big( B(\delta') \big)
\]
by Lemma \ref{Lem:PositiveDensity}.

For these fixed parameters $A,c_0,\delta',h_0,r$ and $h$,
\begin{eqnarray*}
c_0 \Vol(B(\delta')) & \leq & \mathbb{E} \big[ \# \big\{ z \in \NN \cap B(z_2,\delta') : \, \sigma(\mathcal{D}^\infty(z)) > 0 \big\} \big] \\
& \leq & \P \big( \exists ! z \in \NN \cap B(z_2,\delta') , \, \sigma(\mathcal{D}^\infty(z)) > 0 \big) \\
& & \hspace*{2cm}+ \E \big[ \# \{\NN\cap B(z_2,\delta')\} \, \ind_{\# \NN \cap B(z_2,\delta')\geq 2} \big] \\
& \leq & \P \big( \exists ! z \in \NN \cap B(z_2,\delta') , \, \sigma(\mathcal{D}^\infty(z)) > 0 \big) + \frac{c_0}{4} \Vol(B(\delta'))
\end{eqnarray*}
from which we get $\P(\exists ! z\in\NN\cap B(z_2,\delta'), \sigma(\mathcal{D}^\infty(z))>0) \geq \frac{3c_0}{4} \Vol(B(\delta'))$. Thus we conclude with
\begin{eqnarray*}
\P \big( F(r,h,\delta') \big) & \geq & \P \big( \exists ! z \in \NN \cap B(z_2,\delta') , \, \sigma(\mathcal{D}^\infty(z)) > 0 \big) \, - \, \P \big( \Stab(r,h,\delta')^\complement \big) \\
& & - \, \P \big( \exists z\in\mathcal{N}\cap B(z_2,\delta'),\ \mathcal{D}(z)\!\setminus\!\{z\} \not\subset B(r+h+\delta')^\complement \big) \\
& \geq & \frac{c_0}{4} \Vol(B(\delta')) ~.
\end{eqnarray*}
So $\eps := \frac{c_0}{4} \Vol(B(\delta'))$ depending on $A$, $d$, $\lambda$ and $\delta'$, is suitable, and Lemma \ref{Lem:unifF0} is proved.

\subsection{Technical lemmas}
\label{Sect:TechLemmas}

To complete the proof of Lemma \ref{Lem:unifF0}, it remains to state the two following technical lemmas.

\begin{lemma}
\label{Lem:Densite1/2}
There exists $c_1=c_1(d)>0$ such that for any $A$ large enough, $r>0$ and $0<\delta'<1$,
\begin{eqnarray*}
\lefteqn{\E \Big[ \# \big\{ z \in \mathcal{L}_r : \, z_{\downarrow} \in B(r+A-2\delta') \, \mbox{ and } \, \sigma(\mathcal{D}^{\infty}(z)) > 0 \big\} \Big]} \\
& & \hspace*{5cm}\geq c_1 \E \Big[ \# \big\{ z \in \mathcal{L}_r : \, \sigma(\mathcal{D}^{\infty}(z)) > 0 \big\} \Big]  ~.
\end{eqnarray*}
\end{lemma}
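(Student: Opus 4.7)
The plan is to bound the ``bad'' expectation $Z := \E\big[\#\{z\in\mathcal{L}_r : z_\downarrow \notin B(r+A-2\delta')\}\big]$ and compare it with $Y := \E\big[\#\{z\in\mathcal{L}_r : \sigma(\mathcal{D}^\infty(z))>0\}\big]$. Since the difference between $Y$ and the quantity $X$ on the left-hand side of the statement is at most $Z$, it suffices to show $Z \leq Y/2$ for $A$ sufficiently large, which then gives the lemma with $c_1 := 1/2$.

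The lower bound $Y \geq c\, e^{dr}$ for some $c = c(d) > 0$ is already furnished by Step~1 of the proof of Lemma \ref{Lem:PositiveDensity}, so the whole work lies in controlling $Z$. Each $z \in \mathcal{L}_r$ corresponds bijectively to a Poisson point $z' = z_\downarrow$ whose arc $|\![z',A(z')]\!|$ crosses $S(r)$; such a point satisfies $d(0,z') \geq r \geq d(0,A(z'))$. When moreover $d(0,z') > r+A-2\delta'$, the reverse triangle inequality forces the edge length $d(z',A(z'))$ to exceed $d(0,z')-r$. Since by construction $\mathcal{N} \cap B^+(z', d(z',A(z'))) = \emptyset$, Mecke's formula combined with the volume lower bound \eqref{LowBoundB+} (applied with $\rho = d(0,z')-r$, for which one automatically has $r' \wedge \rho = \rho$ since $r' \geq \rho$) yields
\[
Z \ \leq\ \lambda \int_{d(0,z')>r+A-2\delta'} \exp\!\big(-\lambda c\, e^{d(d(0,z')-r)/2}\big)\, dz'.
\]

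Parametrizing $z' = (r';u)$, bounding $\sinh^d(r') \leq C e^{dr'}$, and changing variables via $s = r'-r$ reduces the right-hand side to $Ce^{dr}\cdot g(A)$, where
\[
g(A) \ :=\ \int_{A-2\delta'}^{+\infty} e^{ds}\, \exp\!\big(-\lambda c\, e^{ds/2}\big)\, ds.
\]
The integrand in $g(A)$ decays doubly exponentially in $s$, so $g(A) \to 0$ as $A \to \infty$, and this convergence is uniform in $r > 0$ and $\delta' \in (0,1)$. Choosing $A$ large enough that $C g(A) \leq c/2$ gives $Z \leq Y/2$ and closes the argument.

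There is no real obstacle: the proof is a direct computation using inputs already available in the paper (Step~1 of Lemma \ref{Lem:PositiveDensity} and the estimate \eqref{LowBoundB+}), together with the empty-ball characterization of the ancestor. The only point requiring care is uniformity in $r$, which is built into the estimate by the choice $\rho = r'-r$ in \eqref{LowBoundB+}: this makes the exponent $e^{d(r'-r)/2}$ depend only on the gap between $r'$ and $r$, so that the prefactor $e^{dr}$ comes out cleanly and matches exactly the order of $Y$ provided by Step~1.
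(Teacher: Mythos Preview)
Your proof is correct and takes a genuinely more direct route than the paper's. You bound the global ``bad'' count $Z$ directly via Mecke's formula and the empty-ball characterization of the ancestor, then compare with the global lower bound $Y\ge c\,e^{dr}$ from Step~1 of Lemma~\ref{Lem:PositiveDensity}. The paper instead localizes to spherical caps $B_{S(r)}(z_i,e^{-r})$: on each cap it controls the bad-event \emph{probability} $\P(L(A,r))$ via a decomposition into an MBD-type event and a long-edge event, then uses Cauchy--Schwarz together with the second-moment bound of Lemma~\ref{lem:MomLr} to pass from a small probability to a small expectation, and finally patches caps together with the Covering Lemma (incurring the constant $K$ so that $c_1=1/(2K)$).

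What each approach buys: yours is shorter, avoids MBD bounds, Cauchy--Schwarz, second-moment estimates, and the covering machinery, and yields the cleaner constant $c_1=1/2$. The paper's approach is more in line with the local-cap methodology used throughout Section~\ref{Section:UnifF0} (in particular the proof of Lemma~\ref{Lem:LimitDeter}), so it reuses existing machinery and gives cap-scale information as a byproduct. One minor remark on your write-up: the map $z\mapsto z_\downarrow$ is an injection from $\mathcal{L}_r$ into $\mathcal{N}$ (onto those $z'$ whose arc crosses $S(r)$), not a bijection onto $\mathcal{N}$; your use of it is correct, but the wording ``bijectively'' is slightly loose. Also note that applying \eqref{LowBoundB+} requires $r'\ge 1$ and $\rho=r'-r\ge 1$, both of which hold once $A\ge 3$ and $\delta'<1$, so this is harmless.
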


\begin{proof}
For the proof, we will consider first a portion of the sphere of radius $r$, and then extend the result to the whole $\mathcal{L}_r$ using the (Covering) Lemma \ref{Lem:covering}.\\

\noindent \textbf{Step 1:}
Given $z_0\in S(r)$, let us consider the event 
\[
L(A,r) := \big\{ \exists z \in B_{S(r)}(z_0,e^{-r}) \cap \mathcal{L}_r : \, z_{\downarrow} \notin B(r+A-2\delta') \big\} ~.
\]
For short, let us set $X_{0,r} := \# \big\{ z \in B_{S(r)}(z_0,e^{-r}) \cap \mathcal{L}_r : \, \sigma(\mathcal{D}^{\infty}(z)) > 0 \big\}$. 
We will prove that 
\begin{equation}\label{etape5}
\E \big[ X_{0,r} \ind_{L(A,r)^\complement} \big] \geq \frac{1}{2}\E[X_{0,r}] ~ .
\end{equation}
With
\[
X_{0,r} \ind_{L(A,r)^\complement} = \# \big\{ z \in B_{S(r)}(z_0,e^{-r}) \cap \mathcal{L}_r : \, z_{\downarrow} \in B(r+A-2\delta') \, \mbox{ and } \, \sigma(\mathcal{D}^{\infty}(z)) > 0 \big\}~ .
\]
Lemma \ref{Lem:Densite1/2} then immediatly follows from the above inequality with $c_1:=1/(2K)$ using the (Covering) Lemma \ref{Lem:covering}, where the covering constant $K=K(d)$ is given in that lemma.\\

The proof of \eqref{etape5} relies on the two following inequalities \eqref{UnifL(A,r)} and \eqref{MinorEsigma} which will be proved in a second time.\\
The first inequality states that the probability of $L(A,r)$ tends to $0$ when $A\to\infty$, uniformly on $r,\delta'$:
\begin{equation}
\label{UnifL(A,r)}
\lim_{A\to\infty} \sup_{r>0, \ \delta'<1} \P( L(A,r) ) = 0 ~.
\end{equation}
The second inequality ensures that there exists $c=c(d)>0$ such that for any $r>0$,
\begin{equation}
\label{MinorEsigma}
\E \big[ X_{0,r}\big] \geq c ~.
\end{equation}
Then, the Cauchy-Schwarz inequality gives
\[
\E \big[ X_{0,r} \ind_{L(A,r)} \big] \leq C^{1/2} \P( L(A,r) )^{1/2}
\]
where $C=C(d)>0$ upperbounds the expectation of $\#\big(B_{S(r)}(z_0,e^{-r}) \cap \mathcal{L}_r\big)^2$ thanks to Lemma \ref{lem:MomLr}. Thus, combining \eqref{UnifL(A,r)} and \eqref{MinorEsigma}, we can choose $A$ large enough uniformly on $r\geq 2$ and $\delta'<1$ such that 
\[
\E \big[ X_{0,r} \ind_{L(A,r)} \big] \leq \frac{1}{2} \E \big[ X_{0,r} \big] ~.
\]This proves \eqref{etape5}.\\

\noindent \textbf{Step 2: proof of \eqref{UnifL(A,r)}.} The proof  is very close to that of Lemma \ref{Lem:LimitDeter} with fewer technical difficulties. By analogy, we write $L(A,r)\subset \Romanbar{1}\cup\Romanbar{2}$ where
\[
\Romanbar{1} := \big\{ \exists z \in B_{S(r)}(z_0,e^{-r}) \cap \mathcal{L}_r : \, \mathcal{D}(z) \not\subset \Cone(z_0 , A e^{-r}) \big\}
\]
and 
\[
\Romanbar{2} :=  \big\{ \exists z \in B_{S(r)}(z_0,e^{-r}) \cap \mathcal{L}_r : \, z_{\downarrow} \in \Cone(z_0 , A e^{-r}) \cap B(r+A-2\delta')^\complement \big\} ~.
\]
We upperbound $\P(\Romanbar{1})$ more easily than in the proof of Lemma \ref{Lem:LimitDeter} since the points $z$ concerned by the event $\Romanbar{1}$ are already in $\mathcal{L}_r$:
\begin{eqnarray*}
\P( \Romanbar{1} ) & \leq & \P \big( \exists z \in B_{S(r)}(z_0,e^{-r}) \cap \mathcal{L}_r : \, \MBD_{r}^{\infty}(z) \geq A e^{-r} \big) \\
& \leq & A^{-2d} e^{2dr} \, \E \Big[ \sum_{z \in B_{S(r)}(z_0,e^{-r}) \cap \mathcal{L}_r} \big( \MBD_{r}^{\infty}(z) \big)^{2d} \Big] \, \leq \, C A^{-2d}
\end{eqnarray*}
(by Lemma \ref{lem:MBD}) which tends to $0$ uniformly on $r,\delta'$. The same holds for $\P(\Romanbar{2})$ as in the proof of Lemma \ref{Lem:LimitDeter}. For this reason, we omit the details.\\

\noindent \textbf{Step 3: proof of \eqref{MinorEsigma}.} This inequality is a consequence of isotropy of the model, (Covering) Lemma \ref{Lem:covering} and Step 1 in the proof of Lemma \ref{Lem:PositiveDensity}:
\begin{eqnarray*}
\lefteqn{\E \Big[ \# \big\{ z \in B_{S(r)}(z_0,e^{-r}) \cap \mathcal{L}_r : \, \sigma(\mathcal{D}^{\infty}(z)) > 0 \big\} \Big]} \\
& &\hspace*{1cm}= \frac{1}{N(r)} \sum_{i=1}^{N(r)} \E \Big[ \# \big\{ z \in B_{S(r)}(z_i,e^{-r}) \cap \mathcal{L}_r : \, \sigma(\mathcal{D}^{\infty}(z)) > 0 \big\} \Big] \\
& &\hspace*{1cm}\geq \frac{1}{N(r)} \E \Big[ \# \big\{ z \in \mathcal{L}_r : \, \sigma(\mathcal{D}^{\infty}(z)) > 0 \big\} \Big] \\
& &\hspace*{1cm}\geq c \, C^{-1}
\end{eqnarray*}
with $N(r)\leq C e^{dr}$. This concludes the proof of the Lemma.
\end{proof}

\begin{lemma}
\label{Lem:NegligibleDelta'}
There exists $h_1=h_1(d)>0$ such that the following limit is uniform on $r>0$ and $h>h_1$:
\[
\lim_{\delta' \to 0} \frac{1}{\Vol(B(\delta'))} \P \big( \exists z\in\mathcal{N}\cap B(z_2,\delta') , \, \mathcal{D}(z)\!\setminus\!\{z\} \not\subset B(r+h+\delta')^\complement \big) = 0 ~.
\]
\end{lemma}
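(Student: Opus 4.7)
The plan is to reduce the bad event to the existence of a \emph{direct} child of some Poisson point $z \in \mathcal{N} \cap B(z_2,\delta')$ lying in $B(r+h+\delta')$, and then to apply the Mecke--Slivnyak machinery twice to obtain a bound of order $\delta' \cdot \Vol(B(\delta'))$; dividing by $\Vol(B(\delta'))$ will yield the desired $o(1)$. The threshold $h_1$ will be chosen so that $d(0,z) \geq r+h-\delta' \geq 1$ for all relevant $z$, which is what allows the lower bound \eqref{LowBoundB+} to enter the decisive estimate.

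I would first reduce to direct children: if $y \in \mathcal{D}(z)\setminus\{z\}$ lies in $B(r+h+\delta')$, the tree-path $z=z_0\to z_1\to\dots\to z_k=y$ (with $A(z_{i+1})=z_i$) consists of Poisson points at strictly increasing distance from the origin, so $c:=z_1$ satisfies $A(c)=z$ and $d(0,c) \in (d(0,z), r+h+\delta')$. Applying Mecke's formula first in $z$, then in $c$, and using Slivnyak to bound $\P(A(c)=z \mid c,z\in\mathcal{N}) \leq e^{-\lambda\, \Vol(B^+(c,d(c,z)))}$, I would obtain
\[
\P(\text{bad}) \;\leq\; \lambda^2 \int_{B(z_2,\delta')} \int_{\Omega(z)} e^{-\lambda\, \Vol(B^+(c,d(c,z)))} \, dc\, dz,
\]
where $\Omega(z) := \{c \in \mathbb{H}^{d+1}: d(0,z) < d(0,c) < r+h+\delta'\}$ is a radial annulus of thickness at most $2\delta'$.

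The core of the argument is to show that the inner integral is $O(\delta')$ uniformly in $z$, $r$ and $h$. I would parametrize $c$ in polar coordinates $c = (\rho_c\,;\,u_c)$ and switch from the angular variable $\phi := \widehat{u_c\, 0\, u_z}$ to the hyperbolic distance $D := d(c,z)$ via the cosine formula $\cosh D = \cosh\rho_c \cosh\rho_z - \sinh\rho_c \sinh\rho_z \cos\phi$. For $\rho_c \approx \rho_z =: R$, the angular factor $\sinh^d(\rho_c)\, d\sigma$ transforms into essentially $D^{d-1}\, dD$ (the two factors $e^{dR}$ cancel), so the inner integral reduces to
\[
\int_0^{2\delta'} ds \int_0^{+\infty} D^{d-1}\, e^{-\lambda\, \Vol(B^+(c,D))}\, dD \,.
\]
Splitting according to $D$: for $D\leq 1$, the small-scale Euclidean behavior of hyperbolic balls gives $\Vol(B^+(c,D)) \gtrsim D^{d+1}$; for $D\geq 1$, the lower bound \eqref{LowBoundB+} yields $\Vol(B^+(c,D)) \gtrsim e^{dD/2}$ (using $d(0,c)\geq 1$ thanks to $h>h_1$). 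In both regimes, $\int D^{d-1}\, e^{-\lambda\, \Vol(\cdot)}\, dD$ is finite, independently of $r$ and $h$, so the inner integral is at most $C\,\delta'$.

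Combining, $\P(\text{bad}) \leq C\lambda^2\, \delta'\cdot \Vol(B(z_2,\delta')) = C\lambda^2\, \delta'\cdot \Vol(B(\delta'))$ by homogeneity of the hyperbolic volume measure, which proves the lemma with rate $O(\delta')$. The hard part will be making the change of variables $\phi \mapsto D$ rigorous uniformly across all scales: in the large-$\phi$ regime, where $D$ is of order $R = r+h$ and the angular volume element blows up like $e^{dR}$, one must absorb this growth into the super-exponential decay $e^{-\lambda c\, e^{dR/2}}$ provided by \eqref{LowBoundB+}. This is precisely where the hyperbolic geometry is essential and where the requirement $h > h_1$ enters.
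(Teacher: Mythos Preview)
Your approach is correct and follows the same skeleton as the paper's proof: reduce to the existence of a direct child $c$ of some $z\in\mathcal{N}\cap B(z_2,\delta')$ inside the thin annulus $C(r+h-\delta',r+h+\delta')$, apply the Mecke--Slivnyak formula twice, bound $\P(A(c)=z\mid c,z\in\mathcal{N})$ by $e^{-\lambda\Vol(B^+(c,d(c,z)))}$, and show the remaining integral is $O(\delta')$ via the lower bound \eqref{LowBoundB+}. The one genuine difference is in how the angular integral is handled: you propose a continuous change of variables $\phi\mapsto D$ via the hyperbolic cosine law, whereas the paper slices the annulus into discrete conical shells
\[
V_{\delta',n}=C(r+h-\delta',r+h+\delta')\cap\{u':\,n e^{-(r+h)}\le \widehat{u0u'}\le (n+1)e^{-(r+h)}\},
\]
bounds $\Vol(V_{\delta',n})\lesssim n^d\delta'$ and $d(z,z')\gtrsim n$ on $V_{\delta',n}$, and sums $\sum_{n\ge 0} n^d\,e^{-\lambda c\,e^{cn}}\,\delta'$. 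The discrete decomposition buys you a completely elementary computation with no Jacobian to track --- in particular it sidesteps the issue you flag about the large-$\phi$ regime and the $\sin^{d-2}\phi$ factor that appears in the change of variables in dimension $d+1\ge 3$. Your continuous version is more transparent about why the answer scales like $\delta'$ (the radial integral has width $2\delta'$ and the transverse integral in $D$ is uniformly bounded), but the paper's coarse dyadic-in-angle version is what you would likely end up writing down to make the estimate airtight. Your separate treatment of the $D\le 1$ range via $\Vol(B^+(c,D))\gtrsim D^{d+1}$ is a refinement the paper does not spell out; the paper simply absorbs the small-$n$ terms into the constant.
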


\begin{proof}
The Mecke's formula allows to write:
\begin{eqnarray}
\label{NegligibleDelta'}
\lefteqn{\P \big( \exists z\in\mathcal{N}\cap B(z_2,\delta') , \, \mathcal{D}(z)\!\setminus\!\{z\} \not\subset B(r+h+\delta')^\complement \big)} \nonumber \\
& & \hspace*{1cm} \leq \P \big( \exists z\in\mathcal{N}\cap B(z_2,\delta') , \, \exists z'\in \mathcal{N} \cap B(r+h+\delta') \, \mbox{ s.t. } A(z')=z \big) \nonumber\\
& & \hspace*{1cm} \leq \E \big[ \#\big\{ z\in\mathcal{N}\cap B(z_2,\delta') : \, \exists z'\in \mathcal{N} \cap B(r+h+\delta') \, \mbox{ s.t. } A(z')=z \big\} \big] \nonumber \\
& & \hspace*{1cm} = \lambda \int_{B(z_2,\delta')} \P \big( \exists z'\in \mathcal{N} \cap B(r+h+\delta') \, \mbox{ s.t. } A(z')=z \,|\, z\in\mathcal{N} \big) \, dz ~.
\end{eqnarray}
For $z$ in $B(z_2,\delta')$,
\begin{eqnarray*}
\lefteqn{\P \big( \exists z'\in \mathcal{N} \cap B(r+h+\delta') \, \mbox{ s.t. } A(z')=z \,|\, z\in\mathcal{N} \big)} \\
& & \hspace*{1cm}  \leq \sum_{n\geq 0} \E \big[ \#\big\{ z' \in \mathcal{N} \cap V_{\delta',n} : \, A(z')=z \big\} \,|\, z\in\mathcal{N} \big]
\end{eqnarray*}
where we set
\[
V_{\delta',n} := C(r+h-\delta',r+h+\delta') \cap \Cone(u,n e^{-r-h},(n+1) e^{-r-h})
\]
and $\Cone(u,ne^{-r-h},(n+1)e^{-r-h})$ is the set of directions $u'\in \mathbb{S}^d$ such that $n e^{-r-h}\leq \widehat{u0u'}\leq (n+1) e^{-r-h}$. A second use of the Mecke's formula gives:
\[
\E \big[ \#\big\{ z' \in \mathcal{N} \cap V_{\delta',n} : \, A(z')=z \big\} \,|\, z\in\mathcal{N} \big] = \lambda \int_{V_{\delta',n}} \P \big( A(z')=z \,|\, z,z' \in\mathcal{N} \big) \, dz' ~.
\]
Given $z'\in V_{\delta',n}$, we have
\[
\P \big( A(z')=z \,|\, z,z' \in\mathcal{N} \big) = \P \big( B^+(z',d(z,z')) \cap \mathcal{N} = \emptyset \big) = e^{-\lambda \Vol(B^+(z',d(z,z')))} ~.
\]
Moreover
\begin{eqnarray*}
\Vol(B^+(z',d(z,z'))) & \geq & c e^{\frac{d}{2} d(0,z')\wedge d(z,z')} \; \; \mbox{ by (\ref{LowBoundB+})} \\
& \geq & c e^{\frac{d}{4} d(z,z')} \; \; \mbox{ since $d(z,z') \leq 2 d(0,z')$} \\
& \geq & c e^{\frac{d}{4} c' n}
\end{eqnarray*}
because $z'\in V_{\delta',n}$ and $r+h$ is larger than some $h_1=h_1(d)>0$.

Let us now compute the volume of $V_{\delta',n}$:
\begin{equation}
\label{VolumeVdelta'n}
\Vol(V_{\delta',n}) = \int_{r+h-\delta'}^{r+h+\delta'} \sinh(\rho)^d \, d\rho \times \sigma \big( \{ u'\in\mathbb{S}^d : \, n e^{-r-h} \leq \widehat{u0u'} \le (n+1) e^{-r-h} \} \big) ~.
\end{equation}
The first term of the r.h.s. of (\ref{VolumeVdelta'n}) is bounded by $c_1 \delta' e^{d(r+h)}$ while the second one is bounded by  $c_2 n^d e^{-d(r+h)}$ using \eqref{InegVol}. The previous constants $c_i$, $i\in\{1,2\}$, are positive and only depend on $d$. Hence, the volume of $V_{\delta',n}$ is smaller than $c_1c_2 n^d \delta'$. Combining what precedees, we finally get for any $r>0$ and $h>h_1$
\[
\P \big( \exists z'\in \mathcal{N} \cap B(r+h+\delta') \, \mbox{ s.t. } A(z')=z \,|\, z\in\mathcal{N} \big) \leq \sum_{n\geq 0} \lambda e^{-\lambda c e^{\frac{d}{4} c' n}} c_1c_2 n^d \delta'
\]
whose upperbound can be expressed as $C \delta'$ with $C=C(\lambda,d,h_1)$ is a positive, finite constant. It then remains to plug this bound in (\ref{NegligibleDelta'}) to conclude.
\end{proof}

\section{Proof of Proposition \ref{Propo:DensitySigma>0}}\label{Section:Densite>0}

Let us first recall an upperbound for the number of elements of $\mathcal{L}_{r}$ in a cap $B_{S(r)}(\cdot,e^{-r})$.

\begin{lemma}[Lemma 4.4 of \cite{flammant-RST}]
\label{lem:MomLr}
For any $p \ge 1$, there exists a constant $C=C(d,p)>0$ such that, for any $r \ge 0$ and any direction $z \in S(r)$,
\[
\mathbb{E} \Big[ \# \big( \RST \cap B_{S(r)}(z,e^{-r}) \big)^p \Big] \leq C ~.
\]
\end{lemma}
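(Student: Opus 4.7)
The plan is to estimate this $p$-th moment by combining the Mecke formula (applied $p$ times) with two geometric ingredients: a cone reduction saying that any Poisson point whose arc reaches the small cap must be angularly close to $z$, and a Poisson void estimate saying that its ancestor lying below level $r$ costs an empty-ball probability that decays doubly-exponentially in the level difference.

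For the case $p=1$, the Mecke formula gives
\[
\mathbb{E}\bigl[\#(\RST \cap B_{S(r)}(z,e^{-r}))\bigr] = \lambda \int_{\mathbb{H}^{d+1}} \mathbb{P}\bigl(|\![y,A(y)]\!| \text{ crosses } B_{S(r)}(z,e^{-r}) \,\bigl|\, y \in \mathcal{N}\bigr)\, dy.
\]
If $y=(r';u')$ with $r' > r$ is a contributor, then necessarily $d(y,A(y)) \geq r'-r$, which forces $B^+(y,r'-r)$ to be devoid of points of $\mathcal{N}$; by (\ref{LowBoundB+}) this has probability at most $\exp(-\lambda c e^{d(r'-r)/2})$ once $r'-r$ is large enough. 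The geometric condition of reaching the specific cap (and not just crossing $S(r)$ somewhere) restricts $u'$ to a slightly enlarged cone of aperture $O(e^{-r})$ around $z$, the enlargement being governed by the arc's angular spread, itself controlled via MBD (Lemma \ref{lem:MBD}). Decomposing into shells $y \in C(r+k,r+k+1) \cap \Cone(z, K e^{-r})$, each of volume at most $C\, e^{dk}$ uniformly in $r$, one obtains
\[
\mathbb{E}[N] \;\leq\; \sum_{k\geq 0} C \lambda\, e^{dk}\, e^{-\lambda c\, e^{dk/2}},
\]
a finite constant independent of $r$.

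For general $p$, it is enough to bound the $p$-th factorial moment (and then use $N^p \leq \sum_{j=1}^p S(p,j) N(N-1)\cdots(N-j+1)$). The $p$-fold Mecke formula writes
\[
\mathbb{E}\bigl[N(N-1)\cdots(N-p+1)\bigr] = \lambda^p \int \mathbb{P}\Bigl(\bigcap_{i=1}^p \{|\![y_i,A(y_i)]\!| \text{ crosses the cap}\} \,\Bigl|\, y_1,\ldots,y_p \in \mathcal{N}\Bigr)\, dy_1\cdots dy_p.
\]
For each $y_i = (r'_i;u'_i)$, the crossing event implies that $B^+(y_i, r'_i - r)$ contains no point of $\mathcal{N}$ other than possibly $\{y_j\}_{j\neq i}$, and that $u'_i$ lies in an $O(e^{-r})$-cone around $z$. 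The joint void probability is bounded, using $\Vol\bigl(\bigcup_i B^+(y_i,r'_i-r)\bigr) \geq \tfrac{1}{p}\sum_i \Vol(B^+(y_i,r'_i-r))$, by $\prod_i \exp(-\lambda c\, e^{d(r'_i-r)/2}/p)$, multiplied by a combinatorial factor accounting for the $\binom{p}{2}$ pairwise inclusions. After integrating over shells as in the $p=1$ case, one recovers a bound of the form $C(p,d,\lambda)$, independent of $r$.

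The main obstacle is the geometric reduction: rigorously justifying that an arc $|\![y,A(y)]\!|$ which intersects a cap of radius $e^{-r}$ on $S(r)$ forces $y$ into an enlarged cone of controlled aperture around $z$, uniformly in how far $y$ lies above $r$. This relies on the precise definition of the arc (Appendix~\ref{Appendix:arcs}) and on the straightness/MBD bounds, and it is the step where the hyperbolic metric is essential: the exponential growth $\sinh^d(\rho) \asymp e^{d\rho}$ is exactly compensated by the $e^{-dr}$ angular factor in each shell, producing the bounded-volume cones that make the sum converge. A secondary technicality is handling the joint probability when several $y_i$ are mutually close (so that their $B^+$ balls overlap and the inclusion-exclusion must be controlled), which is the reason one prefers the $\tfrac{1}{p}$-union bound above rather than a naive independence argument.
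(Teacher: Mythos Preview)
First, note that the paper does not actually prove this lemma: it is quoted verbatim as Lemma~4.4 of \cite{flammant-RST}, so there is no ``paper's own proof'' to compare against here. I will therefore assess your argument on its own merits.

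Your overall plan (Mecke formula, shell decomposition, Poisson void estimate via~(\ref{LowBoundB+})) is the natural one and can be made to work. But the geometric reduction---your ``main obstacle''---is not handled correctly, and the tool you reach for is the wrong one.

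\textbf{The cone aperture is not $O(e^{-r})$ uniformly.} If $y=(r';u')$ with $r'=r+k$ has its arc $|\![y,A(y)]\!|$ crossing the cap at $w$, then from the arc parametrisation in Appendix~\ref{Appendix:arcs} the direction of $w$ lies on the $\mathbb{S}^d$-geodesic between $u'$ and the direction of $A(y)$, so $\widehat{y\,0\,w}\le\widehat{y\,0\,A(y)}$. The latter angle is governed by the jump length $\rho=d(y,A(y))$: by the hyperbolic law of sines in the triangle $0,y,A(y)$ one has $\sin\widehat{y\,0\,A(y)}\le \sinh(\rho)/\sinh(r')$, which for large $r'$ behaves like $e^{\rho-r'}$. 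Since $\rho$ ranges over $[k,r']$, this is anywhere between $e^{-r}$ and $1$. Thus $y$ lies in a cone of aperture $\approx e^{-r}+e^{\rho-r'}$, not in a fixed $K e^{-r}$-cone. Your displayed sum $\sum_{k\ge 0} C\lambda\,e^{dk}e^{-\lambda c\,e^{dk/2}}$ implicitly assumes the fixed aperture, so it is not justified as written. The fix is to decompose \emph{also} over the jump length (say $\rho\in[k+m,k+m+1]$), getting shell volume $\lesssim e^{d(k+m)}$ and void probability $\le \exp(-\lambda c\,e^{d(k+m)/2})$; the resulting double sum still converges.

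\textbf{MBD is the wrong tool here, and its use is close to circular.} Lemma~\ref{lem:MBD} controls moments of \emph{backward} deviations of descendants of points already in $\mathcal{L}_r$; it says nothing about the \emph{forward} angular spread of a single edge $(y,A(y))$. Worse, Lemma~\ref{lem:MBD} is itself a statement about sums over $\RST\cap B_{S(r)}(\cdot,Ae^{-r})$, the very object whose cardinality you are trying to bound. What you actually need for the cone reduction is elementary hyperbolic trigonometry applied to the triangle $0,y,A(y)$, as sketched above---no MBD is required (and in \cite{flammant-RST} the order of dependence is in fact the reverse: the moment bound on $\#\mathcal{L}_r$ in a cap is an input to the MBD estimates, not a consequence of them).

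With the corrected double decomposition and the trigonometric bound in place of MBD, your Mecke argument goes through for $p=1$, and your factorial-moment extension with the $\tfrac{1}{p}$-union bound is then fine for general $p$.
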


Thanks to the Covering Lemma (Lemma \ref{Lem:covering}), it is now easy to prove that $\mathcal{L}_r = \RST \cap S(r)$ admits in expectation about $e^{dr}$ elements:
\begin{equation}\label{utile:prop3.6}
\E \big[ \# \mathcal{L}_r \big] \leq \sum_{i=1}^{N(r)} \E \big[ \# ( B_{S(r)}(z_i,e^{-r}) \cap \mathcal{L}_r) \big] \leq C e^{dr} ~.
\end{equation}
Finally, combining the previous inequality and \eqref{densite>0-Lr} proved in Step 1 of Lemma \ref{Lem:PositiveDensity}, we get Proposition \ref{Propo:DensitySigma>0}.

\appendix 

\section{Arcs of the RST as a subset of $\mathbb{H}^{d+1}$}\label{Appendix:arcs}

Given $z_1,z_2\in \mathbb{H}^{d+1}$, the arc $|\![z_1,z_2]\!|$ is precisely defined in Section 2.2 of \cite{flammant-RST} but for convenience we recall the main lines. Let us write $z_i=(r_i;u_i)$ with polar cooordinates, $i\in \{1,2\}$. Whenever $u_1$ and $u_2$ are not antipodal (and it will be a.s. the case when $z_2$ is the ancestor of $z_1$), we consider the unique geodesic $\gamma_{u_1,u_2}:[0,1] \to \mathbb{S}^{d}$ on the sphere connecting $u_1$ to $u_2$. Then, the arc $|\![z_1,z_2]\!|$ is the path
\[
t \in [0,1] \, \mapsto \, \big((1-t)r_1+tr_2 ; \gamma_{u_1,u_2}(\phi_{z_1,z_2}(t)) \big) \in \mathbb{H}^{d+1}
\]
where $\phi_{z_1,z_2}:[0,1] \to [0,1]$ is defined as
\[
\phi_{z_1,z_2}(t) := \frac{1}{\widehat{u_10u_2}} \arccos \Big(\frac{(1-t)\sinh(r_1)+t\cos(\widehat{u_10u_2})\sinh(r_2)}{\sinh((1-t)r_1+tr_2)} \Big) ~.
\]
This construction of the arc $|\![z_1,z_2]\!|$ ensures that the distance to the origin (as well as the distance to $z_1$) are monotonous along the path $|\![z_1,z_2]\!|$.\\

By Property \ref{Prop:FiniteDegree}, we know that the geodesics $[z,A(z)]$ and $[z',A(z')]$, for $z,z'\in \NN$, can overlap only at their extremities. This is not the case any more when we use the arcs $|\![z,A(z)]\!|$ and $|\![z',A(z')]\!|$. For any $r>0$, it may exist some points $z\in\mathcal{L}_r$ belonging to several arcs, say $|\![z_1,A(z_1)]\!|,\ldots,|\![z_k,A(z_k)]\!|$. Hence, such a point $z$ will be counted with multiplicity $k$ in $\mathcal{L}_r$. Also, to identify without ambiguity this point $z$, we should formally represent it as a couple made up with its location in $\mathbb{H}^{d+1}$ and one of the arcs generating it, say $|\![z_i,A(z_i)]\!|$. In this case the vertex $z_\downarrow\in\NN$ is defined as $z_\downarrow := z_i$. In this article, we will commit the following abuse of notations: we will count elements of $\mathcal{L}_r$ with multiplicity without specifying the arcs distinguishing them.


{\footnotesize

}

\end{document}